\documentclass[12pt,article]{amsart}
\usepackage[T1]{fontenc}
\usepackage[english]{babel}
\usepackage{enumerate}
\usepackage{mathtools}
\usepackage{esint}
\usepackage{setspace}
\usepackage{mathrsfs}
\usepackage{bbm}
\usepackage{geometry}
\usepackage{amssymb,amsthm,amsmath} 
\usepackage{indentfirst}
\usepackage{xcolor}
\geometry{left=2cm,right=2cm,top=2cm,bottom=2cm}
\usepackage{graphicx, caption}
\usepackage{pgfplots}
\usepackage{subfig}
\usepackage[countmax]{subfloat}
\captionsetup[subfigure]{labelformat=empty}

\linespread{1.2}
\renewcommand{\epsilon}{\varepsilon}

\makeatletter

\usepackage{url}									
\usepackage{hyperref}	
\usepackage{cleveref}

\usepackage{soul}
\usepackage{comment}

\newcommand{\numberset}{\mathbb}
\newcommand{\N}{\numberset{N}}
\newcommand{\R}{\numberset{R}}
\newcommand{\C}{\numberset{C}}
\newcommand{\Z}{\numberset{Z}}

\newcommand{\D}{\mathcal{D}}
\newcommand{\DAB}{\mathcal{D}_{AB}}

\DeclareMathOperator{\supp}{supp}
\DeclareMathOperator{\sgn}{sgn}

\newcommand{\green}{\textcolor{green}}

\newcommand{\blue}{\textcolor{blue}}

\numberwithin{equation}{section}
\newtheorem{proposition}{Proposition}[section]
\newtheorem{definition}{Definition}[section]
\newtheorem{lemma}{Lemma}[section]
\newtheorem{theorem}{Theorem}[section]
\newtheorem{corollary}{Corollary}[section]
\newtheorem{remark}{Remark}[section]

\newcommand{\frk}[1]{\ensuremath\mathfrak{#1}}

\theoremstyle{remark}

\theoremstyle{definition}

    \pgfplotsset{compat=1.18}
\begin{document}

\title[Generalized Strichartz estimates for the  Dirac equation]{Generalized Strichartz estimates for the massive Dirac equation with critical potentials}

\author{Federico Cacciafesta}
\address{Federico Cacciafesta: 
Dipartimento di Matematica, Universit\'a degli studi di Padova, Via Trieste, 63, 35131 Padova PD, Italy}
\email{cacciafe@math.unipd.it}

\author{Elena Danesi}
\address{Elena Danesi: DISMA, Politecnico di Torino,  Corso Duca degli Abruzzi, 24, 10129 Torino TO, Italy
}
\email{elena.danesi@polito.it}

\author{\'Eric S\'er\'e }
\address{\'Eric S\'er\'e: CEREMADE, UMR CNRS 7534, Universit\'e Paris-Dauphine, PSL Research University,
Pl. de Lattre de Tassigny,
75775 Paris Cedex 16, France}
\email{sere@ceremade.dauphine.fr}

\thanks{F.C. and E.D. acknowledge support by the Gruppo Nazionale per l'Analisi Matematica, la Probabilit\`{a} e le loro Applicazioni (GNAMPA)
}

\keywords{Dispersive estimates; 
Strichartz estimates;
Dirac equation;
Coulomb potential;
Aharonov-Bohm potential.}%

\subjclass[2020]{%
35Q40,
35Q41.
}%

\begin{abstract}
    In this paper we prove generalized Strichartz estimates for the massive Dirac equation in the case of two critical potential perturbations, namely the $2d$ Aharonov-Bohm magnetic potential and the $3d$ Coulomb potential. The proof makes use of the relativistic Hankel transform introduced in \cite{cacser}-\cite{cacfan} for the massless systems, and here adapted to the massive case: this allows for an explicit representation of the solutions, which reduces the analysis to the proof of  suitable estimates on the generalized eigenfunctions of the operators. To the best of our knowledge, these are the first dispersive estimates for the massive Dirac equation with critical potentials.
\end{abstract}
\maketitle

\section{Introduction}

The study of dispersive properties of the Dirac equation is a topic that has attracted a significant interest in recent years, given the relevance of the model in the applications (the Dirac equation is a fundamental equation in relativistic quantum mechanics, widely used to describe particles with spin $1/2$), and the intrinsic mathematical interest of the Dirac operator.

While dispersive estimates for ``small'' potentials are now well understood (see \cite{danfan}, \cite{bousdanfan}, \cite{erdgre}, \cite{erdgretop} and references therein), in the case of {\em critical potentials}, {\it i.e.} that exhibit the same homogeneity of the massless Dirac operator, and thus that are  critical with respect to the scaling of the equation, the picture is far from being completed and only some partial results are currently available, all of which in the {\em massless} case. We mention \cite{cacser}, \cite{cacserzha} and \cite{dan} for what concerns local smoothing and generalized Strichartz estimates for the massless Dirac-Coulomb equation, and \cite{cacfan}, \cite{cacyinzha} and \cite{cacdanzhayin1} for local smoothing, generalized Strichartz and Strichartz estimates for the massless Dirac equation in Aharonov-Bohm magnetic potential. 

The purpose of this paper is to complement the analysis developed in the papers above, by proving generalized Strichartz estimates for the Dirac equation with Aharonov-Bohm magnetic potential and then with Coulomb potential, in the {\em massive} case: the presence of a non-zero mass term yields additional difficulties, as indeed it destroys the scaling invariance of the operator and thus makes the analysis more delicate, as eventually high and low energies will exhibit a different behaviour.

\medskip

Let us start by recalling the definition of the free Dirac equation. In $2$ and $3$ dimensions, it is defined as follows:

\begin{equation}\label{diracfree}
\begin{cases}
\displaystyle
 i\partial_tu +\mathcal{D} u=0\,,\quad u(t,x):\mathbb{R}_t\times\mathbb{R}_x^n\rightarrow\mathbb{C}^{N}\\
u(0,x)=u_0(x)
\end{cases}
\end{equation}
where $N=2$ if $n=2$ and $N=4$ if $n=3$, the Dirac operator $\D$ is given by the formulas 
\begin{itemize}
\item $\mathcal{D}=-i\displaystyle\sum_{k=1}^3\alpha_k\partial_k+m\beta =-i(\alpha\cdot\nabla)+m\beta$ if $n=3$,
\item $\mathcal{D}=-i(\sigma_1\partial_x+\sigma_2\partial_y)+\sigma_3 m$ if $n=2$
\end{itemize}
where the constant $m$ (the mass) will be here strictly positive, and where
\begin{equation}
\beta= \left(\begin{array}{cc}Id_2 & 0 \\0 & -Id_2\end{array}\right),\quad
\alpha_k=\left(\begin{array}{cc}0 & \sigma_k \\\sigma_k & 0\end{array}\right),\quad k=1,2,3
\end{equation}
with $\sigma_k\; (k=1,2,3)$ the Pauli matrices
\begin{equation}\label{sigma}
\sigma_1=\left(\begin{array}{cc}0 & 1 \\1 & 0\end{array}\right),\quad
\sigma_2=\left(\begin{array}{cc}0 &-i \\i & 0\end{array}\right),\quad
\sigma_3=\left(\begin{array}{cc}1 & 0\\0 & -1\end{array}\right)\,.
\end{equation}

Before stating our main results, we find convenient to introduce some technical, preliminary tools that will be fundamental in the following.

\subsection{Preliminaries: partial wave decomposition}
The Dirac operator is a self-adjoint operator on $L^2(\R^n,\C^N)$ with domain $H^1(\R^n,\C^N)$, and it has purely continuous spectrum given by
$$
\sigma(\D)=(-\infty,-m]\cup[m,+\infty).
$$
For the basic properties of the free Dirac operator we refer to \cite{thaller}, to the survey \cite{estlewser} and references therein.

By making use of spherical coordinates it is possible to write 
\[
L^2(\R^n ; \C^N) \cong L^2 ( (0, \infty), r^{n-1}dr ) \otimes L^2 (\mathbb S^{n-1}; \C^N).
\]
Then, the following decompositions hold.
\begin{itemize}
\item {\bf Case $n=2$.} There is a unitary isomorphism
\[
L^2 (\R^2; \C^2 ) \cong \bigoplus_{k} L^2 ((0,\infty), r dr) \otimes \frk h^2_k
\]
given by
\begin{equation}
\label{decomp2}
\Psi (x) = \sum_{k \in \Z}  \Big [\psi^+_k (r) \,  \Xi^+_k (\theta) + \psi^-_k(r) \, \Xi^-_k (\theta)\Big ]
\end{equation}
where the spaces $\frk h^2_k$ are such that $L^2 (\mathbb S^1; \C^2) \cong \bigoplus_{k \in \Z }\frk h^2_k$, they are two-dimensional Hilbert spaces with basis $\big \{ \Xi^+_k(\theta), \Xi^-_k (\theta) \big \}$ given by
\begin{equation}
\label{circ_harm}
\Xi^+ _k (\theta) = \frac{1}{ \sqrt{2 \pi}} \begin{pmatrix}  e^{i k\theta} \\ 0 \end{pmatrix}, \quad 
\Xi^- _k (\theta) = \frac{1}{ \sqrt{2 \pi}} \begin{pmatrix} 0 \\ e^{i ( k + 1) \theta}  \end{pmatrix}.
\end{equation}
\item {\bf Case $n=3$.} There is a unitary isomorphism
\begin{equation*}
L^2(\R^3;\C^4) \cong \bigoplus_{k,m_k} L^2((0,\infty), r^2 dr) \otimes \frk h^3_{k,m_k}
\end{equation*}
given by
\begin{equation}
\label{decomp3}
\Psi (x)= \sum_{k \in \Z^*} \sum_{m_k} \psi^+_{k,m_k}(r) \Xi^+_{k,m_k}(\theta_1, \theta_2) + \psi^-_{k,m_k}(r) \Xi^-_{k,m_k} (\theta_1, \theta_2).
\end{equation}
where the spaces $\frk h^3_k$ are such that $L^2( \mathbb S^2; \C^4) \cong \bigoplus_{k \in \Z^*} \bigoplus_{m_k} \frk h^3_{k,m_k}$, $m_k = -\lvert k \rvert + \frac12,-\lvert k \rvert + \frac12 +1, \dots,  \lvert k \rvert - \frac12$.  they are two-dimensional Hilbert spaces with basis $\big \{ \Xi^+_{k,m_k}(\theta_1,\theta_2), \Xi^-_{k,m_k} (\theta_1,\theta_2) \big \}$ given by
\begin{equation}
\label{sph_harm}
\Xi^+_{k,m_k} (\theta_1,\theta_2)= \begin{pmatrix} i \Omega^{m_k}_{k} \\ 0_2 \end{pmatrix}, \quad \Xi^-_{k, m_k} = \begin{pmatrix} 0_2 \\ \Omega^{m_k}_{-k} \end{pmatrix}
\end{equation}
where
\[
\Omega_{k,m_k} = \frac1{\sqrt{\lvert 2k + 1 \rvert}} \begin{pmatrix} \sqrt{ \lvert k-m_k+ \tfrac12\rvert} Y^{m_k - \frac12}_{\lvert k + \frac12 \rvert - \frac12} \\ \sgn(-k) \sqrt{\lvert k+ m_k + \tfrac12 \rvert} Y^{m_k+\frac12}_{\lvert k + \frac12\rvert - \frac12} \end{pmatrix}.
\]
and where $Y^m_l(\theta_1, \theta_2)$ are the standard spherical harmonics.
\end{itemize}

The action of the operator $\D$ on each subspace $L^2(r^{n-1}dr)\otimes h_k$ is represented by the matrices:
\begin{itemize}
\item if $n=2$: 
\begin{equation}\label{2dfreerad}
d_k^{2dfree}=\left(\begin{array}{cc}-m &  -\frac{d}{dr}-\frac{k+1}{r} \\ \frac{d}{dr}-\frac{k}{r} & m\end{array}\right),\qquad k\in\Z
\end{equation}
\item if $n=3$:\begin{equation}\label{3dfreerad}
d_k^{3dfree}=\left(\begin{array}{cc}-m & -\frac{d}{dr}+\frac{k-1}{r} \\
\frac{d}{dr}+\frac{k+1}{r} & m\end{array}\right),\qquad k\in\Z^*.
\end{equation}
\end{itemize}
Notice that the formulas above only depend on the parameter $k$, both in dimensions $2$ and $3$; therefore, in order to provide a unified treatment, with a slight abuse of notations in what follows we will only keep the dependence on the parameter $k\in\mathcal{A}_n$ with $\mathcal{A}_2=\Z$ and $\mathcal{A}_3=\Z^*$.

The above decomposition provides a convenient separation of variables: any spinor $\Psi\in L^2(\R^n,C^N)$ can be written as
\begin{equation}\label{spherdec}
\Psi(x)=\sum_{k\in\mathcal{A}_n}\psi_k(r)\cdot \Xi_k(\theta)
\end{equation}
where $\psi_k(r)=\begin{pmatrix} \psi^1_k(r) \\
\psi_k^2(r)
\end{pmatrix}.$ 
This decomposition is often referred to as {\em partial wave decomposition}; in fact it is nothing but the analogue of the spherical harmonics decomposition adapted to the Dirac operator. We refer to \cite{thaller}, Section 4.6 for further details. 

In what follows, we shall define a function $\Psi$ to be {\em Dirac-radial} if every coefficient $\psi_k(r)$ in its decomposition \eqref{spherdec} is zero except for the ones corresponding to $k=0$ if $n=2$ and $k=\pm1$ if $n=3$. We thus introduce the following orthogonal projections on $L^{2}$ which will be useful:
\begin{itemize}
\item if $n=2$:
\begin{equation}\label{def-pro2}
  P_{rad}:
  L^{2}(\mathbb{R}^{2};\mathbb{C}^2)\to   
  L^{2}(rdr)\otimes h_{0}(\mathbb{S}^{1}),
  \qquad
  P_{\bot}=I-P_{rad},
\end{equation}
\item if $n=3$:
\begin{equation}\label{def-pro3}
  P_{rad}:
  L^{2}(\mathbb{R}^{3};\mathbb{C}^4)\to   
  \bigoplus_{k\in\{-1,1\}}L^{2}(r^{2}dr)\otimes h_{k}(\mathbb{S}^{2}),
  \qquad
  P_{\bot}=I-P_{rad}.
\end{equation}
\end{itemize}
Notice that for any initial datum $u_0\in Dom(\mathcal{D})$, the projections defined above allow for a decomposition of the flow of \eqref{diracfree} as follows:

 \begin{equation*}
e^{it\mathcal{D}} u_0=e^{it\mathcal{D}}P_{rad}u_0+e^{it\mathcal{D}}P_{\bot} u_0.
  \end{equation*}

\subsection{Main results}
We are now in position to state our main Theorems, that is, the generalized Strichartz estimates for the Dirac equations.


Our first result concerns the free Dirac equation in dimensions $2$ and $3$, that is system \eqref{diracfree}. 
\medskip

{\bf Functional setting and notations.} We shall use standard notations for Lebesgue and Sobolev spaces; if not specifically indicated, the norms will be intended on the whole space (i.e. $L^p_t=L^p_t(\R)$ and $L^q_x=L^q_x(\R^n)$), and we shall systematically omit the dimension on the target space. We shall denote with $L^p_tL^q_x=L^p(\mathbb{R}_t; L^q(\mathbb{R}^n_x))$ the mixed space-time Strichartz spaces. Using the polar coordinates $x=r\omega$, $r\geq0$, $\omega\in S^{n-1}$, and given a measurable function $F=F(t,x)$ we shall denote by
$$
\|F\|_{L^p_tL^q_{r^2dr}L^2_\omega}
:=
\left(\int_{\R}\left(\int_{0}^{+\infty}\left(\int_{ S^{n-1}}|F(t,r,\omega)|^2\,d\sigma\right)^{\frac q2}\,r^{n-1}dr\right)^{\frac pq}\,dt\right)^{\frac1p},
$$
being $d\sigma$ the surface measure on the sphere. We then have the following

\begin{theorem}\label{freetheorem}[Strichartz estimates in the free case].
Let $n=2,3$ and let $(p,q)$ be such that
\begin{itemize}
\item { { if $n=2$:}}
\begin{equation*}
2<p\leq \infty,\qquad 2\leq q<\infty,\qquad \frac1p+\frac1q<\frac12\:\: or\:\: (p,q)=(\infty,2),
\end{equation*}
\item { { if $n=3$:}}
\begin{equation*}
2\leq p\leq \infty,\qquad 2\leq q<\infty,\qquad \frac1p+\frac2q<1\:\: or\:\: (p,q)=(\infty,2).
\end{equation*}
\end{itemize}
Let $u_0\in  H^s(\mathbb{R}^n)$ and let us define
\begin{equation*}
    u_0^{low}=\chi_{[m,m+1]}(\vert\mathcal{D}\vert)u_0,\qquad u_0^{high}=u_0-u_0^{low}
\end{equation*}
where $\chi_A$ denotes the characteristic function of the set $A$.
Then there exists a constant $C>0$ such that the following Strichartz estimates hold
\begin{equation}\label{genstrichfreelow}
\|e^{it\mathcal{D}}u_0^{low}\|_{L^p_t L^q_{r^{n-1}dr}L^2_\theta}\leq C\|u_0^{low}\|_{ H^{\frac n2-\frac2p-\frac nq}},
\end{equation}
\begin{equation}\label{genstrichfreehigh}
\|e^{it\mathcal{D}}u_0^{high}\|_{L^p_t L^q_{r^{n-1}dr}L^2_\theta}\leq C\|u_0^{high}\|_{ H^{\frac n2-\frac1p-\frac nq}}.
\end{equation}
\end{theorem}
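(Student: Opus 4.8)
The plan is to reduce the mixed-norm estimates \eqref{genstrichfreelow}--\eqref{genstrichfreehigh} to a pair of scalar estimates for the half-wave-type propagators obtained after diagonalizing each radial block $d_k^{ndfree}$. First I would use the partial wave decomposition \eqref{spherdec}: since $e^{it\mathcal{D}}$ commutes with the decomposition and the angular profiles $\Xi_k$ form an orthonormal basis of $L^2_\theta$, the $L^2_\theta$ norm in the target space turns into an $\ell^2$ sum over $k\in\A$, so it suffices to control each $\|e^{itd_k}\psi_k(r)\|_{L^p_t L^q_{r^{n-1}dr}}$ and sum. On each block, the operator $d_k^{ndfree}$ is (up to the mass term) a Bessel-type first-order system; I would diagonalize it via the relativistic Hankel transform alluded to in the abstract (the massive adaptation of the transforms of \cite{cacser}-\cite{cacfan}), which conjugates $d_k$ to multiplication by $\pm\sqrt{\rho^2+m^2}$ on the frequency side. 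This yields an explicit oscillatory-integral representation of $e^{itd_k}\psi_k$ whose kernel is built from the generalized eigenfunctions (Bessel functions $J_\nu$ with $\nu$ depending on $k$ and, in the Coulomb/AB cases, on the coupling).

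Next I would split in frequency exactly as in the statement: $u_0^{low}$ lives on $|\mathcal{D}|\in[m,m+1]$, i.e. on the compact frequency shell $\rho\in[0,\sqrt{2m+1}]$, while $u_0^{high}$ lives on $\rho\gtrsim 1$. On the high-frequency piece, $\sqrt{\rho^2+m^2}\sim\rho$ with bounded derivatives of all orders relative to $\rho$, so the dispersive behaviour is that of the massless (scaling-invariant) Dirac flow, and the Sobolev exponent $\frac n2-\frac1p-\frac nq$ is the one dictated by the homogeneity of degree $1$; here I would invoke (or re-run) the massless generalized Strichartz machinery of \cite{cacser},\cite{cacfan},\cite{dan} — essentially a $TT^*$ argument feeding on pointwise bounds for the Bessel kernels, combined with a Littlewood–Paley dyadic decomposition and interpolation between the $L^2_tL^q_x$ endpoint (from local smoothing / weighted estimates) and the trivial $L^\infty_tL^2_x$ estimate. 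On the low-frequency piece, the phase $\sqrt{\rho^2+m^2}$ behaves near $\rho=0$ like $m+\tfrac{\rho^2}{2m}$, i.e. like a Schrödinger (degree-$2$) dispersion after factoring out the harmless modulation $e^{itm}$; this is why the admissible range and the Sobolev loss $\frac n2-\frac2p-\frac nq$ match the Schrödinger scaling, and I would run the corresponding stationary-phase / Hankel-kernel estimates on the bounded frequency annulus, where everything is uniform and no dyadic summation is needed.

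The key technical input in both regimes is a uniform-in-$k$ pointwise bound on the relevant oscillatory kernels
\[
K_{k}(t,r,s)=\int_0^\infty e^{it\sqrt{\rho^2+m^2}}\,\varphi(\rho)\,J_{\nu(k)}(r\rho)\,J_{\nu(k)}(s\rho)\,\rho\,d\rho
\]
(with $\varphi$ either a fixed cutoff near $\rho=0$ or a dyadic bump for the high-frequency part), together with its companion built from the ``coupled'' eigenfunctions in the perturbed cases. The standard tools are the asymptotics and uniform bounds for $J_\nu$ (the bound $|J_\nu(x)|\lesssim \min(x^\nu,\, x^{-1/2})$ uniformly in $\nu\ge 0$, and oscillatory cancellation via van der Corput in $\rho$), which produce a decay of the form $|t|^{-(n-1)/2}$ in the relevant weighted space; then duality and the Hardy–Littlewood–Sobolev / Christ–Kiselev lemma upgrade this to the full mixed-norm Strichartz family.

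The main obstacle I anticipate is precisely making these kernel bounds uniform in the partial-wave index $k\in\A$ (equivalently in $\nu\to\infty$): naive stationary phase loses constants that grow with $\nu$, so one has to exploit the structure of the sum over $k$ — either by pairing the $k$-sum with the $\ell^2$ orthogonality of the $\Xi_k$ before estimating (so that only an $\ell^2$, not $\ell^1$, control in $k$ is needed), or by using the sharp uniform Bessel bounds and the fact that the "centrifugal" turning point $r\rho\sim\nu$ pushes the relevant support out where the oscillatory estimate is again favourable. Handling the transition region $r\rho\sim\nu$ (the classical Bessel turning-point/Airy regime) uniformly, and doing so simultaneously for high and low frequencies with the two different homogeneities, is the delicate part; the rest is a fairly routine $TT^*$-plus-interpolation assembly.
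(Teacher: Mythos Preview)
Your high-level architecture (partial wave decomposition, relativistic Hankel transform, high/low frequency split with wave-like vs.\ Schr\"odinger-like scaling) matches the paper. The technical core, however, is quite different. You propose a $TT^*$ scheme based on pointwise bounds for the two-point kernel $K_k(t,r,s)=\int e^{it\sqrt{\rho^2+m^2}}\varphi(\rho)J_{\nu(k)}(r\rho)J_{\nu(k)}(s\rho)\rho\,d\rho$ via stationary phase/van der Corput in $\rho$, aiming at a $|t|^{-(n-1)/2}$ dispersive bound. The paper does \emph{not} go this route: after the dyadic localizations $\rho\sim N$, $r\sim R$, it applies Hausdorff--Young directly to the map $h\mapsto\int e^{it\sqrt{(Ny)^2+m^2}}h(y)\,dy$ (this is where the factor $C_N\sim N^{-1/2}$ vs.\ $N^{-1}$ enters and produces the two different Sobolev exponents), and then controls the spatial norm by $L^q$-bounds on a \emph{single} Bessel factor $\rho^{-(n-2)/2}J_\nu(\rho)$ on $[R,2R]$ (Lemma~\ref{lembessel}), summed via a Schur test over the $(N,R)$ lattice.

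This distinction matters precisely at the obstacle you flag. In the paper's scheme the time estimate is completely independent of $\nu$ (the phase carries no $k$-dependence after rescaling), so uniformity in $k$ reduces to uniform-in-$\nu$ $L^q$ bounds on Bessel functions, which are classical (the three-regime estimate~\eqref{estbes}). Your $TT^*$ kernel, by contrast, couples the $t$-oscillation to $\nu$ through the turning-point structure of $J_\nu$, and extracting a $|t|^{-(n-1)/2}$ bound uniformly in $\nu$ is genuinely hard; moreover a pure dispersive $\to$ $TT^*$ argument would naturally yield only the Keel--Tao admissible range, not the wider generalized range in the theorem (that enlargement comes exactly from keeping the angular norm at $L^2_\theta$ and estimating the radial integral directly, not from improved pointwise decay). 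So while your plan is not unreasonable, the step you identify as the main obstacle is one the paper simply avoids by design.
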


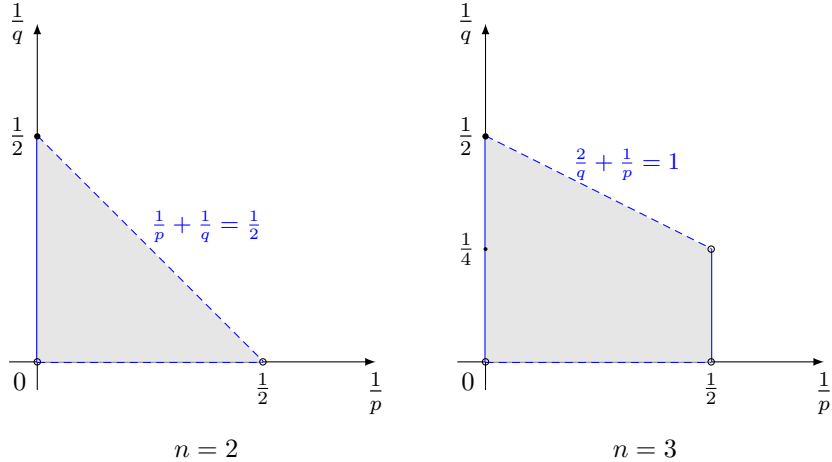
\begin{figure}
\centering
\subfloat[][ $n=2$]{%
\begin{tikzpicture}[scale=1.5, >=latex]
\draw[->] (-.25,0) -- (3,0) coordinate (x axis); \node[below] at (3,0) {$\frac1p$};
\draw[->] (0,-.25) -- (0,3) coordinate (y axis); \node[left] at (0,3) {$\frac1q$};
\draw[thick, densely dashed, blue] (0,2) -- (2, 0);
\draw[thick, densely dashed, blue] (0,0) -- (2, 0);
\draw[thick, blue] (0,2) -- (0,0);
\fill[gray!20] (0,0) -- (2,0) -- (0,2) -- cycle;
\draw (0,0) circle (0.8pt) node[below left] {\footnotesize $0$};
\draw (2,0) circle (0.8pt) node[below] {\footnotesize $\frac12$};
\draw (1.5,1.2) node [blue]{ \scriptsize$\frac1p + \frac 1q =\frac12$};
\fill (0,2) circle (0.8pt) node[left] {$\frac12$};
\end{tikzpicture}
}
\quad
\subfloat[][$n=3$]{
\begin{tikzpicture}[scale=1.5, >=latex]
\draw[->] (-.25,0) -- (3,0) coordinate (x axis); \node[below] at (3,0) {$\frac1p$};
\draw[->] (0,-.25) -- (0,3) coordinate (y axis); \node[left] at (0,3) {$\frac1q$};
\draw[thick, blue] (2,0) -- (2,1);
\draw[thick, densely dashed, blue] (0,0) -- (2, 0);
\draw[thick, blue] (0,2) -- (0,0);
\draw[thick, densely dashed, blue] (0,2) -- (2,1);
\fill[gray!20] (0,0) -- (2,0) -- (2,1) -- (0,2) -- cycle;
\draw (0,0) circle (0.8pt) node[below left] {\footnotesize $0$};
\draw (2,0) circle (0.8pt) node[below] {\footnotesize $\frac12$};
\draw (1.25,1.75) node [blue] {\scriptsize $\frac2q + \frac1p =1$};
\fill (0,1) circle (0.5pt) node [left] {\footnotesize $\frac14$};
\draw (2,1) circle (0.8pt);
\fill (0,2) circle (0.8pt) node[left] {$\frac12$};
\end{tikzpicture}
}
\caption{Generalized Strichartz estimates, free case}
\label{fig:gSe}
\end{figure}

\begin{remark}
Notice that estimates above imply in particular that for an initial datum which is Dirac-radial (as defined in the above) the standard Strichartz estimates hold, and in fact, for a wider range of exponents, as in the Klein-Gordon case, see \cite{ovcharov}. Also, with a relatively standard argument (see e.g. \cite{dan}), by making use of Sobolev embedding on the sphere, one could deduce from \eqref{genstrichfreelow}-\eqref{genstrichfreehigh} estimates on the complete $L^p_tL^q_x$ norms of the solution by imposing additional angular regularity on the initial condition. We omit the details.
\end{remark}

\begin{remark}\label{squaringrek}
Strichartz estimates for the free Dirac equation are usually deduced from the corresponding ones for the Klein-Gordon flow by making use of the well known  identity
\begin{equation}\label{squaringtrick}
(i\partial_t+\mathcal{D})(i\partial_t-\mathcal{D})=(\Delta-m^2-\partial_{tt}^2) I_4.
\end{equation}
Therefore, estimates \eqref{genstrichfreelow}-\eqref{genstrichfreehigh} could be obtained by the ones proved in \cite{ovcharov} for the propagator $e^{it\sqrt{-\Delta+m}}$. In fact, the solutions to the Klein-Gordon equation exhibit a ``wave-like'' behaviour for high energies and ``Sch\"odinger-like'' for low energies. This property is inherited by the solutions to the massive Dirac equation; therefore, in order to provide a precise result, we find convenient to treat separately the two different regimes, as done in \cite{ovcharov}. We prefer to provide the result and, most of all, the proof in the free case to begin with because it allows to explain in details the overall strategy and all the key estimates, so that dealing with the potential perturbations in the subsequent sections will be much easier and faster. We should point out indeed that our proof does not rely on the ``squaring trick'' \eqref{squaringtrick}: to the best of our knowledge, this is the first time that Strichartz estimates for the free flow are derived without the use of such identity. Also, we think that Theorem \ref{freetheorem} might provide a useful reference.
\end{remark}

Next, we consider the dynamics for the Dirac equation in Aharonov-Bohm magnetic potential, that is the two-dimensional system
\begin{equation}\label{DiracAB}
\begin{cases}
\displaystyle
 i\partial_tu+\mathcal{D}_{AB}u=0,\quad u(t,x):\mathbb{R}_t\times\mathbb{R}_x^2\rightarrow\mathbb{C}^{2}\\
u(0,x)=u_0(x)
\end{cases}
\end{equation}
where $\mathcal{D}_{AB}$ is given by
\begin{equation}\label{op:D}
\begin{split}
\DAB&=\begin{pmatrix}
    m & (i\partial_{1}+A^{1})-i(i \partial_{2}+A^{2})\\
    (i\partial_{1}+A^{1})+i(i \partial_{2}+A^{2}) &  -m
  \end{pmatrix}
\end{split}
\end{equation}
with the Aharonov-Bohm (AB) potential  $A(x)=(A^1(x),A^2(x))$ given by
\begin{equation}\label{AB}
A:\R^2\setminus\{(0,0)\}\to\R^2,
\quad
A(x)=\frac{\alpha}{|x|}\left(-\frac{x_2}{|x|},\frac{x_1}{|x|}\right),\, \quad
x=(x_1,x_2)
\end{equation}
and where the real constant $\alpha\in(0,1)$ is called the \emph{magnetic flux}. Note that 
for all nonzero $x\in \mathbb{R}^{2}$ we have
\begin{equation}\label{eq:transversal}
  \textstyle
A(x)\cdot\hat{x}=0,
\qquad
\hat{x}=\big(\frac{x_1}{|x|},\frac{x_2}{|x|}\big)\in\mathbb{S}^1.
\end{equation}

The (AB) potential \eqref{AB} is homogeneous of degree $-1$,  and thus it is a scaling critical perturbation for the massless Dirac operator. Dispersive estimates for solutions to \eqref{DiracAB} in the massless case are now well understood (see \cite{cacdanzhayin1}, also for a much wider overview of the model). On the other hand, to the best of our knowledge, nothing is known in the case $m>0$, and thus we here provide a first result.
\medskip

{\bf Functional setting.}
Letting
$
  \|u\|_{ H^{1}_{AB}}=
  \|\mathcal{D}_{AB}u\|_{L^{2}},
$
we define the $H^s_{AB}$ norms for $|s|\le1$ by interpolation and duality. This norm is not equivalent to the homogeneous Sobolev one, but it is possible to prove that $ \|u\|_{ H^{s}_{AB}}\leq  \|u\|_{  H^{s}}$ for any $s\in[0,1]$ 
(see e.g. Lemma 2.5 in \cite{cacfan}).

We then have the following

\begin{theorem}\label{ABtheorem}[Strichartz estimates for the Dirac-Aharonov Bohm equation].
  Let $\alpha\in(0, 1)$ and let $\mathcal{D}_{AB}^{dist}$ be the distinguished self-adjoint extension of $\mathcal{D}_{AB}$\footnote{The operator $\mathcal{D}_{AB}$ is not essentially self-adjoint, but a distinguished self-adjoint extension can be selected in a standard way. We postpone to subsection \ref{preliminariesAB} a small discussion of the issue.}. Assume $(p,q)$ satisfy
\begin{equation}\label{rangeAB}
p(\alpha)<p\leq \infty,\qquad 2\leq q<q(\alpha),\qquad\frac 2q + \frac{p(\alpha)}p \Big ( 1 - \frac 2{q(\alpha)} \Big ) <1 \:\: or\:\: (p,q)=(\infty,2)
\end{equation}
where
  \begin{equation}\label{q-alp}
  q(\alpha)=
  \begin{cases} 
    \frac2{\alpha}&
    \text{if}\quad \alpha\in(0,\frac12]\\
    \frac2{1-\alpha}&
    \text{if}\quad \alpha\in(\frac12,1)  \end{cases}\qquad {\rm and}\quad
    p(\alpha) =\frac{2q(\alpha) -4}{q(\alpha) -4}.
  \end{equation}
Let $u_0\in  H^s_{AB}(\mathbb{R}^2)$ and let us define
\begin{equation*}
    u_0^{low}=\chi_{[ m, (m+1)]}(\vert\mathcal{D}_{AB}^{dist}\vert)u_0,\qquad u_0^{high}=u_0-u_0^{low}
\end{equation*}
where $\chi_A$ denotes the characteristic function of the set $A$.
  Then there exists a constant $C>0$ such that the following Strichartz estimates hold
\begin{equation}\label{genstricdclow}
\|e^{it\mathcal{D}^{dist}_{A,m}}u_0^{low}\|_{L^p_t L^q_{rdr}L^2_\theta}\leq C\|u_0^{low}\|_{ H_{AB}^{1-\frac2p-\frac 2q}},
\end{equation}
\begin{equation}\label{genstricdchigh}
\|e^{it\mathcal{D}^{dist}_{A,m}}u_0^{high}\|_{L^p_t L^q_{rdr}L^2_\theta}\leq C\|u_0^{high}\|_{ H_{AB}^{1-\frac1p-\frac 2q}}.
\end{equation}
    If in particular $u_0$ is such that $P_{rad}u_0=0$, then $(p(\alpha),q(\alpha))$ in \eqref{rangeAB} becomes $(2,\infty)$.

\end{theorem}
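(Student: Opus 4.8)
The plan is to follow the same scheme used for the free equation (Theorem \ref{freetheorem}), adapting each step to the presence of the singular magnetic potential. First I would run the partial wave decomposition for $\mathcal{D}_{AB}$: since the (AB) potential is purely tangential (see \eqref{eq:transversal}) and homogeneous of degree $-1$, its action on the circular harmonics \eqref{circ_harm} amounts to shifting the angular parameter, so that on each sector $k\in\Z$ the operator $\mathcal{D}_{AB}^{dist}$ is unitarily equivalent to a radial operator of the form
\[
d_k^{AB}=\begin{pmatrix} -m & -\tfrac{d}{dr}-\tfrac{k+1+\alpha}{r} \\ \tfrac{d}{dr}-\tfrac{k+\alpha}{r} & m\end{pmatrix},
\]
i.e. the free radial matrix \eqref{2dfreerad} with $k\rightsquigarrow k+\alpha$. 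The choice of the distinguished self-adjoint extension $\mathcal{D}_{AB}^{dist}$ corresponds precisely to fixing the boundary behaviour at $r=0$ in the two exceptional sectors where the associated Bessel index drops below $1$ (those with $k=0$ and $k=-1$), and I would recall this from \cite{cacfan} and subsection \ref{preliminariesAB}.

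Second, I would diagonalize each $d_k^{AB}$ through the massive relativistic Hankel transform: exactly as in the free case, its generalized eigenfunctions — for spectral parameter $\lambda$ with $|\lambda|\ge m$, written $\lambda=\pm\sqrt{\mu^2+m^2}$, $\mu\ge 0$ — are explicit combinations of Bessel functions $J_{\nu_k}(\mu r)$ and $J_{\nu_k\pm1}(\mu r)$ with index $\nu_k=\nu_k(\alpha)$ equal to the distance from $k+\alpha+\tfrac12$ to the integers, whence $\nu_k\ge\min(\alpha,1-\alpha)$ with equality only in the exceptional sectors. In the transformed variable the flow $e^{it\mathcal{D}_{AB}^{dist}}$ becomes pure multiplication by $e^{it\lambda}$, which yields an explicit oscillatory-integral representation of the solution on each sector.

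Third — and this is the heart of the proof — I would establish the fixed-sector Strichartz estimates for these oscillatory integrals, splitting into high- and low-energy pieces as in Theorem \ref{freetheorem}. For $u_0^{high}$ the relativistic dispersion $\lambda\sim\mu$ makes the kernel wave/Klein--Gordon-like and produces the loss $\tfrac1p$ in the regularity exponent, as in \eqref{genstricdchigh}; for $u_0^{low}$, $\lambda\sim m+\mu^2/(2m)$, the behaviour is Schr\"odinger-like and yields the loss $\tfrac2p$, as in \eqref{genstricdclow}. The key input is $k$-uniform bounds on the Bessel-kernel integrals, obtained via van der Corput and stationary phase together with the standard estimates $J_\nu(s)\lesssim\min(s^\nu,s^{-1/2})$, keeping the dependence on $\nu_k$ explicit; after a $TT^*$ argument this reduces to the same Stein--Weiss / fractional-integration estimates on the half-line already proved for the free flow, now with the shifted index $\nu_k$.

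Finally I would sum over the angular sectors $k\in\Z$: since $\nu_k$ grows linearly in $|k|$, only finitely many terms are delicate, and the size of those terms — controlled through the Stein--Weiss exponents by the smallest index $\min(\alpha,1-\alpha)$ — is exactly what dictates the admissible range \eqref{rangeAB} with $q(\alpha),p(\alpha)$ as in \eqref{q-alp}. When $P_{rad}u_0=0$ the two exceptional sectors are removed, the smallest surviving index is bounded away from $0$ and $1$, and the constraint degenerates to the endpoint pair $(p(\alpha),q(\alpha))=(2,\infty)$. The main obstacle I anticipate is precisely the interplay between the loss of scaling invariance — which prevents a clean sector-by-sector rescaling and forces the high/low analysis to be carried out with constants uniform in $k$ — and the angular summation, especially in the low-energy regime and near the exceptional sectors, where the distinguished extension must be invoked to make sense of the radial flow at $r=0$.
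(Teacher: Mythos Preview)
Your overall scheme matches the paper's proof: partial wave decomposition, radial operators $d_k^{AB}$ obtained by shifting the angular index, explicit generalized eigenfunctions in terms of Bessel functions, a relativistic Hankel transform diagonalizing the flow, sector-by-sector Strichartz estimates with a high/low frequency split, and finally summation over $k$. That is exactly the route taken in Section~\ref{ABsec}, which in turn reuses almost verbatim the machinery of Section~\ref{freesec}.

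Two points deserve correction, however. First, your description of the Bessel indices is off: with the paper's conventions the shift is $k\mapsto k-\alpha$, the eigenfunctions for $k\neq 0$ involve $J_{|k-\alpha|}$ and $J_{|k-\alpha|+\mathrm{sgn}(k)}$ (both of strictly positive order), and there is a \emph{single} exceptional sector $k=0$, whose eigenfunction contains $J_{-\alpha}$ of genuinely \emph{negative} order (see \eqref{psiAB2}). The restriction $q<q(\alpha)$ does not arise from a ``smallest positive index $\min(\alpha,1-\alpha)$'' as you suggest, but from the singularity $J_{-\alpha}(r)\sim r^{-\alpha}$ at the origin, which forces the condition $\tfrac2q-\alpha>0$ in Lemma~\ref{localizedlemmaAB} (and symmetrically $\tfrac2q-(1-\alpha)>0$ when $\alpha\in(\tfrac12,1)$). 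This is the one structural difference with the free case and is what determines \eqref{q-alp}.

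Second, the technical engine in the paper is not van der Corput/stationary phase plus Stein--Weiss, but rather: (i) a Hausdorff--Young inequality after the change of variables $z=\sqrt{(Ny)^2+m^2}$, which produces the constant $C_N$ in \eqref{CN2} and hence the high/low dichotomy; (ii) $k$-uniform $L^q$ bounds on $\rho^{-\frac{n-2}{2}}J_\nu(\rho)$ over dyadic shells (Lemma~\ref{lembessel}); and (iii) a dyadic Littlewood--Paley decomposition in both frequency and radius, closed by a Schur test. Your proposed tools could plausibly be made to work, but the paper's argument avoids any oscillatory-integral analysis beyond Hausdorff--Young, and the $k$-uniformity needed for the angular summation comes directly from the uniform Bessel bounds \eqref{estbes} rather than from tracking stationary-phase constants.
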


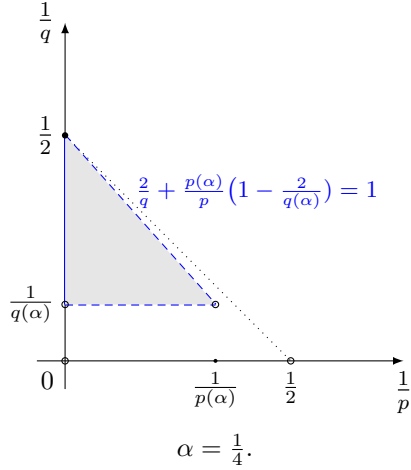
\begin{figure}
\centering
\subfloat[][$\alpha = \frac14 $.]{%
\begin{tikzpicture}[scale=1.5, >=latex]
\draw[->] (-.25,0) -- (3,0) coordinate (x axis); \node[below] at (3,0) {$\frac1p$};
\draw[->] (0,-.25) -- (0,3) coordinate (y axis); \node[left] at (0,3) {$\frac1q$};
\draw[dotted] (0,2) -- (2,0);
\draw[thick, densely dashed, blue] (0,2) -- (1.333, 0.5);
\draw[thick, densely dashed, blue] (0,0.5) -- ((1.333, 0.5);
\draw[thick, blue] (0,2) -- (0,0.5);
\fill[gray!20] (0,0.5) -- (1.333,0.5) -- (0,2) -- cycle;
\fill (1.333,0) circle (0.5pt) node[below] {\footnotesize $\frac1{p(\alpha)}$};
\draw (0,0) circle (0.8pt) node[below left] {\footnotesize $0$};
\draw (2,0) circle (0.8pt) node[below] {\footnotesize $\frac12$};
\draw (0,0.5) circle (0.8pt) node[left] {\footnotesize $\frac1{q(\alpha)}$};
\draw (1.333, 0.5) circle (0.8pt);
\draw (1.7,1.5) node [blue]{ \scriptsize$\frac2q + \frac{p(\alpha)}p \big ( 1 - \frac2{q(\alpha)}) =1$};
\fill (0,2) circle (0.8pt) node[left] {$\frac12$};
\end{tikzpicture}
}
\caption{Generalized Strichartz estimates, Dirac-Aharonov Bohm equation}
\label{fig:gSe}
\end{figure}

\begin{remark}
The restriction on the range of admissible Strichartz pairs in the case of the Aharonov-Bohm field provided by \eqref{q-alp} is ultimately due to the singularity of the generalized eigenfunctions in the origin: this feature seems in fact to provide a structural, unavoidable obstruction to the validity of the full set of estimates (see Proposition 6.6 in \cite{cacdanzhayin1}). In fact, in our proof we shall decompose the flow as  
\begin{equation*}
e^{it\mathcal{D}_{AB}} u_0=e^{it\mathcal{D}_{AB}}P_{rad} u_0+e^{it\mathcal{D}_{AB}}P_{\bot} u_0
  \end{equation*}
  and prove separate estimates for the two components: the component $e^{it\mathcal{D}_{AB}}P_{rad}u_0$ will be responsible for the restriction of the admissible range, as it is indeed the one that needs to account for the singularity of the domain. This singularity, as we shall see, is $\sim r^{-\alpha}$ when $\alpha \in(0,1/2]$ and $\sim r^{\alpha-1}$ when $\alpha \in (1/2,1)$. Therefore, $\alpha=1/2$ represents somehow a critical threshold value for the magnetic flux: with this choice, the singularity will be of order $ r^{-1/2}$, and we shall only be able to provide the (trivial) $L^\infty L^2$ bound for the flow.
  
Let us finally mention that it could be possible to recover the full set of admissible pairs as in the free case by introducing a suitable weight, and thus to obtain so-called ``weighted Strichartz estimates'', but we prefer to avoid further technicalities here.
\end{remark}

Finally, we consider the flow of the $3$d Dirac-repulsive Coulomb equation with positively projected initial datum, that is system
\begin{equation}\label{diraccoul}
\begin{cases}
\displaystyle
 i\partial_tu=\mathcal{D}_{DC} u\,,\quad u(t,x):\mathbb{R}_t\times\mathbb{R}_x^3\rightarrow\mathbb{C}^{4}\\
u(0,x)=u_0(x)
\end{cases}
\end{equation}
where $\mathcal{D}_{DC}$ is given by 
\begin{equation}\label{DCoperator}
\mathcal{D}_{DC}=\mathcal{D}-\frac{\nu}{|x|}\, \qquad \nu<0,
\end{equation}
where {\it we assume that $u_0$ is in the range of $\Pi_+:=\chi_{[0,+\infty)}(\mathcal{D}_{DC})$} (see \cite{mormul}). Note that this condition on $u_0$ implies that $u(t,\cdot)=\Pi_+u(t,\cdot)$ for all $t$, since $\Pi_+$ commutes with $\mathcal{D}_{DC}$.
For the sake of simplicity we shall focus on the $3d$ case; the $2d$ case could be dealt with with minor, technical differences. Let us stress that for this model the sign of the charge, which we are here taking to be  $\nu<0$ ({\em repulsive} Coulomb potential), plays a crucial role in the dynamics (see Remarks \ref{attractive-repulsive} and \ref{criticalregion}).

The Coulomb potential represents again a {\em scaling critical perturbation} for the massless Dirac operator, providing thus difficulties similar to the ones given by the AB potential. Anyway, the understanding of the dynamics in the massless case is at a much earlier stage: this is due on one hand to the fact that the generalized eigenfunctions exhibit a more involved expression (see below), and on the other hand to the fact that  scalar potentials are structurally more complicated then the magnetic ones, as the ``squaring trick'' \eqref{squaringtrick} does not allow to obtain a system of wave equations. As a further notable difference, the Dirac-Coulomb operator has a non-empty point spectrum, and the presence of eigenfunctions clearly represents an obstacle for dispersion. 

To our knowledge, the only available results for the massless dynamics are some local smoothing estimates (see \cite{cacser}) and the generalized Strichartz estimates proved in \cite{cacserzha}-\cite{dan}. Here, we provide a first result concerning the dynamics in the massive case.

{\bf Functional setting.}
We set
$
\|u\|_{{H}^1_{DC}}:=\||\mathcal{D}_{DC}|u\|_{L^2};
$
then, the $\|\cdot\|_{H^s_{DC}}$ for $|s|\leq 1$ will be obtained by interpolation and duality. We recall that, see \cite{fms21}, Corollary 1.8, the norm above is equivalent to the homogenous Sobolev one in dimension $n=3$ if $|\nu| < \frac{\sqrt3}2$.

We prove the following:
\begin{theorem}\label{DCtheorem}[Strichartz estimates for the positively projected 3d Dirac-Coulomb equation.]
Let $\nu\in(-\frac{\sqrt15}4,0)$ and let $\mathcal{D}_{DC}^{dist}$ be the distinguished self-adjoint extension of $\mathcal{D}_{DC}$\footnote{In this range of the charge, the Dirac-Coulomb operator admits a distinguished self-adjoint extension whose domain is contained in $H^{1/2}$. We shall briefly discuss this issue in subsection \ref{subsecDMprel}.} and assume $(p,q)$ satisfy
 \begin{equation}\label{rangedc}
2\leq p\leq \infty,\qquad 2\leq q<q(\nu),\qquad \frac1p+\frac2q<1\:\: or\:\: (p,q)=(\infty,2)
\end{equation}
where
  \begin{equation}\label{q-nu}
  q(\nu)= \frac 3{1 - \sqrt{1-\nu^2}}.
  \end{equation}
Let $u_0\in  H^s_{DC}$ and let us define
\begin{equation}\label{dcspeccond}
    u_0^{low}=\chi_{[m, m+1]}(\mathcal{D}_{DC}^{dist})u_0,\qquad u_0^{high}=\chi_{(m+1, +\infty)}(\mathcal{D}_{DC}^{dist})u_0
\end{equation}
where $\chi_A$ denotes the characteristic function of the set $A$.
  Then there exists a constant $C>0$ such that the following Strichartz estimates hold
\begin{equation}\label{genstrichdclow}
\|e^{it\mathcal{D}_{DC}^{dist}}u_0^{low}\|_{L^p_t L^q_{r^2dr}L^2_\theta}\leq C\|u_0^{low}\|_{ H_{DC}^{1-\frac2p-\frac 2q}},
\end{equation}
\begin{equation}\label{genstricdchigh}
\|e^{it\mathcal{D}_{DC}^{dist}}u_0^{high}\|_{L^p_t L^q_{r^2dr}L^2_\theta}\leq C\|u_0^{high}\|_{ H_{DC}^{1-\frac1p-\frac 2q}}.
\end{equation}
    If in particular $u_0$ is such that $P_{rad}u_0=0$, then we can take $q(\nu)=\infty$ in \eqref{rangedc}.
\end{theorem}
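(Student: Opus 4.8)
The plan is to run, for the Dirac--Coulomb propagator, the same scheme already used for Theorem~\ref{freetheorem} and Theorem~\ref{ABtheorem}, with the free radial operator \eqref{3dfreerad} replaced by the radial Dirac--Coulomb operator and the Bessel-type generalized eigenfunctions replaced by Whittaker (confluent hypergeometric) ones. \textbf{Step 1 (partial wave reduction).} Decompose $u_0=\sum_{k\in\Z^*}\psi_{0,k}(r)\,\Xi_k(\theta)$ as in \eqref{spherdec}. Since $\mathcal{D}_{DC}^{dist}$ — and hence $\Pi_+$ and the energy cutoffs $\chi_{[m,m+1]}(\mathcal{D}_{DC}^{dist})$, $\chi_{(m+1,\infty)}(\mathcal{D}_{DC}^{dist})$ — is block-diagonal with respect to this decomposition, one has $e^{it\mathcal{D}_{DC}^{dist}}u_0=\sum_k\bigl(e^{itd_k^{DC}}\psi_{0,k}\bigr)(r)\,\Xi_k(\theta)$, where $d_k^{DC}$ is obtained from \eqref{3dfreerad} by adding the potential $-\nu r^{-1}$ to the two diagonal entries; by orthonormality of the $\Xi_k$ on the sphere,
\[
\bigl\|e^{it\mathcal{D}_{DC}^{dist}}u_0\bigr\|_{L^p_tL^q_{r^2dr}L^2_\theta}=\Bigl\|\bigl(\textstyle\sum_k|e^{itd_k^{DC}}\psi_{0,k}(r)|^2\bigr)^{1/2}\Bigr\|_{L^p_tL^q_{r^2dr}}\,,
\]
and since $p,q\ge2$, Minkowski's inequality moves the $\ell^2_k$ sum outside, so it suffices to prove the bound for a single $k$ with a constant uniform in $k$ (as the analysis will show, the angular index only helps).

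\textbf{Step 2 (spectral representation).} For each $k$ one diagonalizes $d_k^{DC}$ on $L^2((0,\infty),r^2dr;\C^2)$ by a relativistic Hankel transform adapted to the Coulomb case. Solving $d_k^{DC}\phi_{k,\lambda}=\lambda\phi_{k,\lambda}$ for $\lambda$ in the absolutely continuous spectrum $|\lambda|\ge m$, the components of $\phi_{k,\lambda}$ are explicit combinations of Whittaker functions $W_{\kappa,\gamma_{|k|}}$, $M_{\kappa,\gamma_{|k|}}$ with $\gamma_{|k|}=\sqrt{k^2-\nu^2}$ and a ($\lambda$- and $m$-dependent) Coulomb parameter $\kappa$; picking the combination that is square-integrable at the origin selects the generalized eigenfunction attached to the distinguished self-adjoint extension. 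Because $\nu<0$ (repulsive) there is no point spectrum, so on the relevant spectral window the flow is entirely carried by the a.c. part, and one obtains a Plancherel identity $\|\psi_{0,k}\|_{L^2(r^2dr)}^2=\int_{|\lambda|\ge m}|(\mathcal{H}_k\psi_{0,k})(\lambda)|^2\,d\mu_k(\lambda)$ together with
\[
e^{itd_k^{DC}}\psi_{0,k}(r)=\int_{|\lambda|\ge m}e^{it\lambda}\,\phi_{k,\lambda}(r)\,(\mathcal{H}_k\psi_{0,k})(\lambda)\,d\mu_k(\lambda)\,,
\]
the integration being restricted to $\lambda\ge m$ once $u_0=\Pi_+u_0$, and further to $[m,m+1]$ or $(m+1,\infty)$ after the cutoff. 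Because $\mathcal{H}_k$ diagonalizes $|\mathcal{D}_{DC}^{dist}|$, the $H^s_{DC}$ norm becomes multiplication by $\langle\lambda\rangle^s$ on the spectral side, which is the weight we must recover.

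\textbf{Step 3 (energy splitting, eigenfunction bounds, conclusion).} As in Theorem~\ref{freetheorem} the two regimes are treated separately: in the low-energy window $\lambda\in[m,m+1]$ the substitution $\lambda=\sqrt{m^2+\rho^2}$ exhibits a Schr\"odinger/Klein--Gordon-type behaviour (producing the exponent $1-\frac2p-\frac2q$), while in the high-energy range $\lambda>m+1$ the mass is lower-order and the profile is a perturbation of the massless Dirac--Coulomb one of \cite{cacserzha}-\cite{dan} (producing $1-\frac1p-\frac2q$). The decisive input is a family of pointwise bounds on $\phi_{k,\lambda}(r)$, uniform in $\lambda$ and $k$: for $r\lesssim\lambda^{-1}$ one has $|\phi_{k,\lambda}(r)|\lesssim c_k(\lambda)\,(\lambda r)^{\gamma_{|k|}}r^{-1}$, so that $\phi_{k,\lambda}(r)\sim r^{\gamma_{|k|}-1}$ as $r\to0$ — the worst case is $|k|=1$, $\gamma_1=\sqrt{1-\nu^2}$, and the integrability of $r^{(\gamma_1-1)q}r^2$ at the origin is exactly the constraint $q<q(\nu)=3/(1-\sqrt{1-\nu^2})$ (the hypothesis $|\nu|<\sqrt{15}/4$ keeping this threshold above $4$) — while in the oscillatory zone $\phi_{k,\lambda}$ has amplitude $\lesssim r^{-1}$ times a Coulomb-distorted phase, with matching transitional bounds across the radii $\lambda^{-1}$ and $(\lambda^2-m^2)^{-1/2}$. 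Feeding these into the same functional-analytic scheme as in the free case — a Plancherel reduction to a one-dimensional oscillatory integral in $\lambda$, controlled by a Coulomb-modified stationary phase, combined by $TT^*$/real interpolation with the trivial $L^\infty_tL^2$ bound from unitarity — closes the estimate for each $k$ uniformly, hence by Step 1 for $u_0$. Finally, if $P_{rad}u_0=0$ only $|k|\ge2$ occur, so $\gamma_{|k|}\ge\sqrt{4-\nu^2}>1$, $\phi_{k,\lambda}$ vanishes at the origin, the integrability obstruction disappears, and one may take $q(\nu)=\infty$.

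\textbf{Main obstacle.} By far the hardest part is the uniform asymptotic analysis of the massive Dirac--Coulomb eigenfunctions in Steps 2--3: the mass destroys scaling invariance, so one cannot reduce to a single special-function variable as in the massless case and must control the Whittaker functions $W_{\kappa(\lambda),\gamma_{|k|}}$, $M_{\kappa(\lambda),\gamma_{|k|}}$ simultaneously in the spectral parameter $\lambda$ (uniformly across the two transition radii $\lambda^{-1}$ and $(\lambda^2-m^2)^{-1/2}$, which coalesce as $\lambda\downarrow m$), in the mass $m$, and in the angular index $k$ with constants summable in $k$; moreover the stationary-phase analysis behind the dispersive bound differs in the Schr\"odinger-like low-energy and the wave-like high-energy regimes, so it must essentially be done twice. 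The remaining ingredients — partial-wave orthogonality, the $\ell^2_k$ summation, the $TT^*$/interpolation step, the identification of the distinguished extension, and the passage from spectral weights to the $H^s_{DC}$ norm — are as in the already-treated free and Aharonov--Bohm cases.
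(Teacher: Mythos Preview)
Your overall architecture matches the paper's: partial wave decomposition, a relativistic Hankel transform built from the Dirac--Coulomb generalized eigenfunctions, splitting into $|k|=1$ and $|k|\ge2$, and identifying the origin singularity $r^{\gamma_1-1}$ as the source of the restriction $q<q(\nu)$. The identification of the main obstacle --- uniform-in-$(k,\lambda)$ control of the massive Dirac--Coulomb eigenfunctions --- is also what the paper isolates (Theorem~\ref{thm_formuladc} and Corollary~\ref{finalcorDC} in the appendix).

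Two points, however, deserve correction. First, the claim ``Because $\nu<0$ (repulsive) there is no point spectrum'' is wrong: the Dirac--Coulomb operator has discrete spectrum in the gap $(-m,m)$ for any nonzero $\nu$, given by the Sommerfeld formula \eqref{discspec}; for $\nu<0$ the eigenvalues are negative and accumulate at $-m$. This is precisely why the theorem assumes $u_0=\Pi_+u_0$ and why the Hankel transform $\mathcal P_k^{DC,+}$ in Proposition~\ref{dchankprop} only reconstructs the positively projected part --- not because the point spectrum is empty. Your argument survives this (the projection is already in the hypotheses), but the justification you give for it is incorrect.

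Second, the technical machinery you sketch in Step~3 --- ``Coulomb-modified stationary phase'' and ``$TT^*$/real interpolation'' --- is not what the paper, or the free/Aharonov--Bohm templates you are invoking, actually use. The paper's scheme is: a dyadic Littlewood--Paley decomposition in ${\bf p}$, a Hausdorff--Young inequality for the one-dimensional oscillatory integral (Step~1 of Lemma~\ref{localizedlemmafree}, yielding the constant $C_N$ in \eqref{CN2bis} that distinguishes high and low frequencies), then pointwise $L^q$-bounds on the rescaled eigenfunctions $H_k^{DC}(N,y,\rho)$ on dyadic shells (Lemma~\ref{localizedlemmaDC} via Corollary~\ref{finalcorDC}), and finally a Schur test to sum over dyadic $R$ and $N$. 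There is no stationary phase and no $TT^*$. Since you explicitly appeal to ``the same functional-analytic scheme as in the free case,'' this mismatch is a genuine gap in the outline: the scheme you describe is not the one already in place, and you would need to supply a separate dispersive/$TT^*$ argument for the Coulomb-distorted propagator, which is not obviously available. If instead you intend to follow the paper's route, the missing piece is exactly the uniform eigenfunction estimate of Theorem~\ref{thm_formuladc}, whose proof for small $N$ (low energies, where $\alpha_N\to-\infty$) requires new work beyond the massless case of \cite{cacserzha}.
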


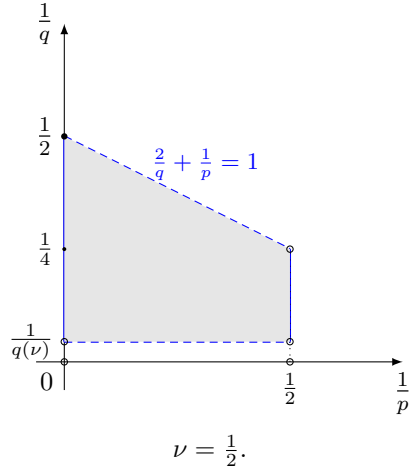
\begin{figure}
\centering
\subfloat[][$\nu=\frac12$.]{
\begin{tikzpicture}[scale=1.5, >=latex]
\draw[->] (-.25,0) -- (3,0) coordinate (x axis); \node[below] at (3,0) {$\frac1p$};
\draw[->] (0,-.25) -- (0,3) coordinate (y axis); \node[left] at (0,3) {$\frac1q$};
\draw[thick, blue] (2,0.179) -- (2,1);
\draw[thick, densely dashed, blue] (0,0.179) -- (2, 0.179);
\draw[thick, blue] (0,2) -- (0,0.179);
\draw[thick, densely dashed, blue] (0,2) -- (2,1);
\draw[dotted] (2,0) -- (2,1);
\fill[gray!20] (0,0.179) -- (2,0.179) -- (2,1) -- (0,2) -- cycle;
\draw (0,0) circle (0.8pt) node[below left] {\footnotesize $0$};
\draw (2,0) circle (0.8pt) node[below] {\footnotesize $\frac12$};
\draw (0,0.179) circle (0.8pt) node[left] {\footnotesize $\frac1{q(\nu)}$};
\draw (2, 0.179) circle (0.8pt);
\draw (1.25,1.75) node [blue] {\scriptsize $\frac2q + \frac1p =1$};
\fill (0,1) circle (0.5pt) node [left] {\footnotesize $\frac14$};
\draw (2,1) circle (0.8pt);
\fill (0,2) circle (0.8pt) node[left] {$\frac12$};
\end{tikzpicture}
}
\caption{Generalized Strichartz estimates, Dirac-Coulomb equation}
\label{fig:gSe}
\end{figure}

\begin{remark}
The condition on the charge $\nu\in(-\frac{\sqrt15}4,0)$ is purely technical, and it is a byproduct of our proof, and it is the same as for in the massless case obtained in \cite{cacserzha}.
\end{remark}

\begin{remark}\label{attractive-repulsive}
The above result presents some notable differences with respect to the Aharonov-Bohm one we should comment on. In this case indeed, in order to prove dispersive estimates we are forced to project onto the positive side of the spectrum; once the sign of the charge $\nu$ is fixed, positive and negative energies (in particular, for values close to $\pm m$) will indeed behave significantly differently. In particular, in our choice of the sign of $\nu<0$, the repulsive case, the point spectrum accumulates towards $-m$ (see formula \eqref{discspec}).  Thus, on the positive side of the spectrum,  far away from the eigenvalues, we shall be able to provide the desired pointwise bounds on the generalized eigenfunctions. On the other hand, for data with energies in the region $(-m-\epsilon,-m)$ with $\epsilon>0$, that are thus ``dangerously close'' to the point spectrum, the necessary bound on the generalized eigenfunctions does not hold. Therefore, our result does not provide estimates for the complete Dirac-Coulomb propagator, but only on its positively projected component. We refer to the Appendix, subsection \ref{confappendix} (see in particular Remark \ref{criticalregion}) for a more detailed technical motivation.
Let us mention the fact that this is perfectly consistent with the existing results: indeed, in the nonrelativistic limit $c\rightarrow \infty$, we expect the solutions to the massive Dirac equation to resemble the Schr\"odinger one, and dispersive estimates for the Schr\"odinger-Coulomb model have been proved in \cite{mizutani} only in the repulsive case.
\end{remark}

{\bf Plan of the paper.}
The next three sections will be devoted to the proofs of Theorems \ref{freetheorem}, \ref{ABtheorem} and \ref{DCtheorem} respectively. The strategy developed follows a now well-established path in the field (see \cite{cacserzha}-\cite{dan}). On the other hand, to the best of our knowledge, this is the very first time that it is adapted to the massive Dirac equation. Here, the presence of the mass term destroys the scaling of the equation, and thus a more refined analysis separating between high and low energies is required. The main technical step is given by Lemma \ref{localizedlemmafree} (in the free and AB case), and  the corresponding one \ref{localizedlemmaDC} (in the Dirac-Coulomb case), in which the structure of the generalized eigenfunctions is exploited in order to obtain the necessary decay on the solution. Let us mention that while Lemma \ref{localizedlemmafree} will be fairly easy to be proved, due to the fact that in the free and in the AB case the generalized eigenfunctions are essentially standard Bessel functions, Lemma \ref{localizedlemmaDC} will require additional technicalities, that is a precise pointwise estimates for the  eigenfunctions of the Dirac-Coulomb model in the massive case. This will be proved in the appendix. Then, the punchline developed in the previous papers will give the results.

\section{The free case: proof of Theorem \ref{freetheorem}}\label{freesec}

In this section we provide the proof of Theorem \ref{freetheorem}. As mentioned in the introduction, the strategy will be later adapted to deal with the case of potential perturbations; therefore, providing all the details here in this much simpler framework will allow to significantly simplify the presentation later on.

\subsection{Spectral theory.}\label{specsec}

In view of providing an explicit representation for the semigroup $e^{it\D}$, we need to write down the eigenfunctions of the operator $\D$. Due to decomposition \eqref{spherdec}, 
we are interested in solving the eigenvalue equation
\begin{equation}\label{eq:geneg}
d_{k} \Psi_k(E, r)=d_{k}\left(\begin{matrix} \psi_k^1(E, r)\\ \psi_k^2(E, r)\end{matrix}\right)=E  \left(\begin{matrix} \psi_k^1(E, r)\\ \psi_k^2(E, r)\end{matrix}\right)
\end{equation}
with $d_k$ given by \eqref{2dfreerad}-\eqref{3dfreerad}. After some simple manipulations, system \eqref{eq:geneg} takes the form:\\
$\bullet$ in $2d$:
\begin{equation}\label{sys-bessel2D}
\begin{cases}
  \frac{d^2}{dr^2} \psi^1_k+\frac1r  \frac{d}{dr} \psi^1_k+((E^2-m^2)-\frac{k^2}{r^2})\psi^1_k=0,\\
    \frac{d^2}{dr^2} \psi^2_k+\frac1r  \frac{d}{dr} \psi^2_k+((E^2-m^2)-\frac{(k+1)^2}{r^2})\psi^2_k=0,
  \end{cases}
\end{equation}
$\bullet$ in $3d$:
\begin{equation}\label{sys-bessel3D}
\begin{cases}
  \frac{d^2}{dr^2} \psi^1_k+\frac1r  \frac{d}{dr} \psi^1_k+((E^2-m^2)-\frac{k(k+1)}{r^2})\psi^1_k=0,\\
    \frac{d^2}{dr^2} \psi^2_k+\frac1r  \frac{d}{dr} \psi^2_k+((E^2-m^2)-\frac{k(k-1)}{r^2})\psi^2_k=0.
  \end{cases}
\end{equation}

These are systems of Bessel (resp. spherical Bessel) differential equations. The solutions for any $E\in\R$ to these systems are given by the following (here and in the following we shall neglect some absolute constants in order to simplify the presentation):

$\bullet$ in $2d$:
\begin{equation}\label{2dfreesol}
\Psi_k(E,r) =\left(\begin{matrix} \psi_k^1(E, r)\\ \psi_k^2(E, r)\end{matrix}\right)=\begin{pmatrix} F_k^{2dfree}({\bf p},r) \\
G_k^{2dfree}({\bf p},r)
\end{pmatrix}=\displaystyle  \begin{pmatrix} N^+(E) J_{|k|}({\bf p}r) \\
-i {\rm sgn}(E)N^-(E)  J_{|k|+{\rm sgn}(k)} ({\bf p}r)
\end{pmatrix},\: k\in\Z
\end{equation}
(with the convention that ${\rm sgn}(k)=1$ if $k=0$);

$\bullet$ in $3d$:

\begin{equation}\label{3dfreesol}
\Psi_k(E,r) =\left(\begin{matrix} \psi_k^1(E, r)\\ \psi_k^2(E, r)\end{matrix}\right)=\begin{pmatrix}F_k^{3dfree}({\bf p},r) \\
G_k^{3dfree}({\bf p},r)
\end{pmatrix}=\displaystyle \sqrt{\frac{\pi}{2r}} \begin{pmatrix} N^+(E) J_{|k|+\frac12{\rm sgn}(k)}({\bf p}r) \\
-i {\rm sgn}(E)N^-(E)  J_{|k|-\frac12{\rm sgn}(k)} ({\bf p}r)
\end{pmatrix},\: k\in\Z^*
\end{equation}
where we are denoting with ${\bf p}=\sqrt{E^2-m^2}$ and with
\begin{equation}\label{Enorm}
N^\pm(E)=\sqrt\frac{{|E|\pm m}}{2|E|}.
\end{equation}

\begin{remark}
Let us give a few comments. First of all, notice that the spinors $\Psi_k$ do not belong to the space $L^2(r^{n-1}dr)$, as indeed the Bessel functions $J_k(r)\sim r^{-1/2}$ for $r\rightarrow +\infty$ (they are only locally in $L^2$) and thus they do not belong to the domain of the Dirac operator; they are indeed generalized eigenfunctions.
Notice moreover that the order of Bessel functions in formulas above is always positive; this implies that the generalized eigenfunctions do not exhibit a singularity in the origin. This won't be the case in presence of critical potentials as we shall see, as in fact generalized eigenfunctions will be unbounded near $0$, and this will have a significant byproduct in the range of admissible Strichartz pairs.
Finally, let us comment on the normalization term $N^\pm(E)$, which is motivated by the structure of the equation. The eigenvalue equation 
$$
\mathcal{D}\Psi=E\Psi
$$can be indeed equivalently written as follows (let us write for brevity only the $3d$ version)
\begin{equation}\label{eigeneqfree}
\begin{cases}
(E-m)\Psi^{up}+i\nabla\cdot \sigma \Psi^{down}=0,\\
(E+m)\Psi^{down}+i\nabla\cdot \sigma \Psi^{up}=0,
\end{cases}
\end{equation}
where $\Psi^{up}$ and $\Psi^{down}$ denote respectively the two-spinors of the top two and bottom two components of the four-spinor $\Psi$. Therefore, these components are connected through the following relations:
\begin{equation}\label{normcond}
\Psi^{up}=\frac{-i \sigma\cdot \nabla}{E-m}\Psi^{down},\qquad\Psi^{down}=\frac{-i \sigma\cdot \nabla}{E+m}\Psi^{up}.
\end{equation}
For all the details on the topic we refer to \cite{lanlifrel} Chapter 3 and \cite{lanlifnonrel} Chapter 5.
\end{remark}

We now define the {\em relativistic Hankel transform}, which will represent a fundamental object in the sequel.

\begin{definition}
Let $n=2,3$ and let $f(r)=\begin{pmatrix} f_1(r) \\
f_2(r)
\end{pmatrix}\in L^2(\R^n,\C^N)$. For any $k\in\mathcal{A}_n$ we define the following operator
\begin{equation}\label{freehank}
\mathcal P_k f(E) \coloneqq \int_0^{+\infty} \psi_k(E,r)^T f(r)r^{n-1} dr
\end{equation}
where $\psi_k$ is given by \eqref{2dfreesol}-\eqref{3dfreesol}. 
\end{definition}

The above operator satisfies several crucial properties that we collect into the following
\begin{proposition}\label{freehankprop}
Let $n=2,3$. The operator 
$$\mathcal P_k \colon [L^2((0,+\infty), r^{n-1} dr)]^2 \rightarrow L^2 (\R \setminus [-m,m], E dE)$$ defined by \eqref{freehank} satisfies for any $k\in\mathcal{A}_n$ the following properties:
\begin{enumerate}
\item $\mathcal P_k$ is invertible and the inverse $\mathcal P^{-1}_k \colon L^2(\R \setminus [-m,m], E dE) \rightarrow [L^2((0,+\infty), r^{n-1} dr )]^2$  is given by the sequence 
\begin{eqnarray}\label{freeinv}
\nonumber
\mathcal P_k^{-1} \phi (r') &=&\int_{\R\backslash[-m,m]}\ \begin{pmatrix}
F_k^{free}(E,r) & F_k^{free}(E,r)\\
  -i G_k^{free}(E,r) & -i G_k^{free}(E,r)
\end{pmatrix} 
\begin{pmatrix}
    \phi^+(E) \\ \phi^-(E)
\end{pmatrix}EdE\\
&:=& \begin{pmatrix}
    \mathcal H_k^{pos,+} & \mathcal H_k^{neg,+}\\
    - i \mathcal H_{k+1}^{pos,-} & - i \mathcal H_{k+1}^{neg,-} 
\end{pmatrix} 
\begin{pmatrix}
    \phi^+(E) \\ \phi^-(E)
\end{pmatrix}
\end{eqnarray}

where $F_k^{free}$ and $G_k^{free}$ are given by \eqref{2dfreesol}-\eqref{3dfreesol} and where $\phi^\pm (E) = \phi (E) \chi_{(\pm m, \pm \infty)}(E)$.
\item $\mathcal{P}_k$ is an isometry.
\item $(\mathcal P_k \D f) (E)= E \mathcal P_kf(E)$.
\end{enumerate}
\end{proposition}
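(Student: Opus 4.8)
The plan is to reduce the whole statement to the classical Hankel transform, keeping careful track of the relativistic dispersion relation $E^{2}=\mathbf p^{2}+m^{2}$. I would dispose of the intertwining property (3) first, since it is the cheapest and uses only that $\psi_k(E,\cdot)$ solves the generalized eigenvalue equation. Indeed $d_k\psi_k(E,r)=E\,\psi_k(E,r)$ by construction (see \eqref{eq:geneg}), and $d_k$ is symmetric on $L^{2}((0,+\infty),r^{n-1}dr)^{2}$ — the radial incarnation of the symmetry of $\D$ — so integrating by parts in \eqref{freehank} moves $d_k$ onto $\psi_k$:
\[
(\mathcal P_k\D f)(E)=\int_0^{+\infty}\psi_k(E,r)^{\tr}(d_kf)(r)\,r^{n-1}dr=\int_0^{+\infty}(d_k\psi_k(E,r))^{\tr}f(r)\,r^{n-1}dr+\text{b.t.}=E\,(\mathcal P_kf)(E)+\text{b.t.},
\]
and the boundary terms vanish: at $r=+\infty$ it suffices to argue on the dense class of compactly supported $f$, and at $r=0$ because — as noted in the remark after \eqref{3dfreesol} — all the Bessel functions entering $\psi_k$ have nonnegative order, hence $\psi_k(E,\cdot)$ is bounded near the origin, while $f\in\dom(\D)$ is controlled there; one then extends by density. (Once (1) is proved, (3) is alternatively immediate by applying the bijection $\mathcal P_k^{-1}$ to this identity, with no boundary-term discussion at all.)

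For (1) and (2) I would proceed through two reductions. First, recall the Hankel inversion theorem: for every order $\nu\ge 0$ the operator $\mathsf H_\nu g(p)=\int_0^{+\infty}J_\nu(pr)g(r)\,r\,dr$ in dimension $2$, respectively its spherical counterpart with kernel $\sqrt{\pi/(2r)}\,J_{l+\frac12}(pr)$ and weight $r^{2}dr$ in dimension $3$ (whose $r^{-1/2}$ factor is exactly what collapses the weight $r^{2}dr$ to an effective $r\,dr$), is — up to the absolute constants omitted throughout — a unitary involution of $L^{2}((0,+\infty),r^{n-1}dr)$ onto $L^{2}((0,+\infty),p\,dp)$. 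Second, $E\mapsto\mathbf p=\sqrt{E^{2}-m^{2}}$ restricts to an increasing bijection of each of $(m,+\infty)$ and $(-\infty,-m)$ onto $(0,+\infty)$, with $|E|\,dE=\mathbf p\,d\mathbf p$ on both branches; hence $L^{2}(\R\setminus[-m,m],E\,dE)$ (the measure being $|E|\,dE$ on the negative half-line) is unitarily two copies of $L^{2}((0,+\infty),\mathbf p\,d\mathbf p)$, indexed by the restrictions $\phi^{+}=\phi\chi_{(m,+\infty)}$ and $\phi^{-}=\phi\chi_{(-\infty,-m)}$. This two-sheeted structure is precisely what produces the block form of \eqref{freeinv}.

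Plugging \eqref{2dfreesol}--\eqref{3dfreesol} into \eqref{freehank} and changing variables $E\leftrightarrow\mathbf p$ on each sheet, one reads off that $\mathcal P_kf=(\phi^{+},\phi^{-})$ is obtained from the pair $(\mathsf H_{\nu_1}f_1,\mathsf H_{\nu_2}f_2)$ — with $\nu_1,\nu_2$ the two Bessel orders appearing in \eqref{2dfreesol}--\eqref{3dfreesol} — by the pointwise-in-$\mathbf p$ action of a $2\times 2$ matrix $M(E)$ assembled from the normalizations $N^{\pm}(E)$, the factor $\sgn(E)$ and the phases $\pm i$. The key algebraic input is the identity $N^{+}(E)^{2}+N^{-}(E)^{2}=1$, immediate from \eqref{Enorm} (in the massless case this would degenerate to $\tfrac12+\tfrac12=1$, which is why there is nothing to track there), on which the unitarity of $M(E)$ for every $E$ rests. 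Thus $\mathcal P_k$ factors as the composition of the unitary map $f\mapsto(\mathsf H_{\nu_1}f_1,\mathsf H_{\nu_2}f_2)$ with the unitary fibered multiplication by $M(\cdot)$; this gives at once that $\mathcal P_k$ is an isometry \emph{onto} $L^{2}(\R\setminus[-m,m],E\,dE)$, i.e. (2) together with the surjectivity needed for (1). Inverting the two reductions — multiply by $M(E)^{*}$, then apply the self-inverse transforms $\mathsf H_{\nu_1},\mathsf H_{\nu_2}$ componentwise — and unwinding the change of variables produces exactly formula \eqref{freeinv}, the operators $\mathcal H^{pos,\pm}_\bullet,\mathcal H^{neg,\pm}_\bullet$ there being the pieces of $\mathsf H_{\nu_1},\mathsf H_{\nu_2}$ seen through the $\phi^{+}/\phi^{-}$ splitting (this also accounts for the index shift $k\mapsto k+1$ in the bottom row, which in $2d$ is simply $\nu_2=\nu_1+1$).

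The step I expect to be the real obstacle is the last one: correctly matching the two sheets of the covering $E\mapsto\mathbf p$ to the two radial components and verifying that the associated fibered $2\times2$ matrix is genuinely unitary — this is where the precise $E$-dependence of the normalizations $N^{\pm}(E)$ in \eqref{Enorm} enters in an essential way, and it has no counterpart in the already-treated massless setting (there $M$ is constant). The Hankel inversion theorem is classical, and the boundary-term/density argument needed for (3) is routine.
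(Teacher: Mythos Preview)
Your approach is correct and runs parallel to the paper's, but is organized more structurally. The paper proceeds by direct computation: for (1) it checks $\mathcal P_k^{-1}\mathcal P_k f=f$ on test vectors $f^{up}=(\phi,0)^{\tr}$, $f^{down}=(0,\phi)^{\tr}$ by expanding both integrals, using $(\mathcal N^{+})^{2}+(\mathcal N^{-})^{2}=1$ to collapse the diagonal terms and observing that the cross terms cancel between the $E>m$ and $E<-m$ contributions, and then invoking the classical Bessel orthogonality $\int_0^\infty J_k(\mathbf pr)J_k(\mathbf pr')\,\mathbf p\,d\mathbf p=\delta(r-r')/r$; for (2) it verifies by hand that $\mathcal P_k^{-1}=\mathcal P_k^{*}$ and then combines with (1); (3) is dismissed as a direct consequence of self-adjointness, exactly as you argue. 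Your factorization $\mathcal P_k=(\text{change of variables})\circ M(\cdot)\circ(\mathsf H_{\nu_1},\mathsf H_{\nu_2})$ packages the same computation: the Bessel orthogonality becomes the unitarity of the Hankel transform, and the identity $(\mathcal N^{+})^{2}+(\mathcal N^{-})^{2}=1$ together with the cross-term cancellation becomes the unitarity of the fibered $2\times2$ matrix $M$. What your route buys is that invertibility, surjectivity and isometry come out simultaneously, with the inverse formula \eqref{freeinv} read off from $M^{*}$; what the paper's route buys is that one never has to write $M$ down explicitly. You are right that the genuinely delicate bookkeeping is on the negative sheet: the entries of $M$ on the $E<-m$ branch are not obtained by naively plugging $E<0$ into \eqref{2dfreesol}--\eqref{3dfreesol}, and getting the off-diagonal cancellation (equivalently, $M^{*}M=I$) requires tracking how the r\^oles of $N^{+}$ and $N^{-}$ interchange there---this is precisely the step the paper carries out in its computation of $I_2$.
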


\begin{proof}

\begin{enumerate}
\item Notice that with a change of variable we have 
\begin{eqnarray}
\label{Hpos}
\nonumber
\mathcal H_k^{pos, \pm }\phi (r') & =&\displaystyle \pm \int_m^\infty N^\pm(E)\frac1{r'^{\frac{n-2}2}} J_k({\bf p}r) \phi(E) E dE \\
& =& \displaystyle \pm \int_0^{+\infty} \mathcal{N}^{\pm}({\bf p}) \frac1{r'^{\frac{n-2}2}} J_k({\bf p}r) \phi(\sqrt{{\bf p}^2 +m^2}) {\bf p} d{\bf p},
\end{eqnarray}
\begin{eqnarray}
\label{Hneg}
\nonumber
\mathcal H_k^{neg, \pm }\phi (r') & =& \displaystyle  \int_{-\infty}^{-m} N^\mp(E) \frac1{r'^{\frac{n-2}2}} J_k({\bf p}r) \phi(E) E dE \\
& =& \displaystyle\int_0^{+\infty} \mathcal{N}^{\mp}({\bf p}) \frac1{r'^{\frac{n-2}2}} J_k({\bf p}r) \phi(-\sqrt{{\bf p}^2 +m^2}) {\bf p} d{\bf p},
\end{eqnarray}
with
\begin{equation}\label{gpm}
\mathcal{N}^\pm ({\bf p}) = \frac {\sqrt{ \sqrt{{\bf p}^2 +m^2} \pm m}}{\sqrt{ 2 \sqrt{{\bf p}^2+m^2}}}.
\end{equation}

Let $f^{up}(r)=\begin{pmatrix} \phi(r) \\
0
\end{pmatrix}$, and 
let us show that $(\mathcal P_k^{-1} \mathcal P_k f^{up})(r') = f^{up}(r')$. We have by definition
 \[
    (\mathcal P_k^{-1} \mathcal P_k f^{up})(r') = (\mathcal H_k^{pos,+} (\mathcal P_k f^{up})^+) (r') + (\mathcal H_k^{neg,+} (\mathcal P_k f^{up})^- (r') = I_1 + I_2.
    \]
    We compute the term separately. 
    \[
    \begin{split}
        I_1 & = \int_0^\infty \mathcal{N}^+({\bf p}) \frac1{r'^{\frac{n-2}2}} J_k({\bf p}r') \Big ( \int_0^\infty \psi_k (\sqrt{{\bf p}^2 +m^2}, r)^T \phi (r) r^{n-1} dr \Big ) {\bf p} d{\bf p} \\
        & = \int_0^ \infty \mathcal{N}^+({\bf p}) J_k({\bf p}r') \Big ( \int_0^\infty \frac 1{(rr')^{\frac{n-2}2}} \Big ( \mathcal{N}^+({\bf p}) J_k({\bf p}r) \phi_1(r) -i \mathcal{N}^-({\bf p}) J_{k+1}({\bf p}r) \phi_2(r) \Big ) r^{n-1} dr \Big ) {\bf p} d{\bf p}.
    \end{split}
    \]
    Analogously, we get
    \[
    I_2 =  \int_0^ \infty \mathcal{N}^-({\bf p}) J_k({\bf p}r') \Big ( \int_0^\infty \frac 1{(rr')^{\frac{n-2}2}} \Big ( \mathcal{N}^-({\bf p}) J_k({\bf p}r) \phi_1(r) +i \mathcal{N}^+({\bf p}) J_{k+1}({\bf p}r) \phi_2(r) \Big ) r^{n-1} dr \Big ) {\bf p} d{\bf p}
    \]
    We observe that 
    \[
    (\mathcal{N}^+({\bf p}))^2 + (\mathcal{N}^-({\bf p}))^2 = \frac {\sqrt{{\bf p}^2 +m^2} + m}{2 \sqrt{{\bf p}^2 +m^2}} +  \frac {\sqrt{{\bf p}^2 +m^2} - m}{2 \sqrt{{\bf p}^2 +m^2}} = 1.
    \]
    Therefore, summing $I_1$ with $I_2$ we get
    \[
     (\mathcal P_k^{-1} \mathcal P_k \phi )(r') e_1  = \int_0^\infty \int_0^{+\infty} \frac1{(rr')^{\frac{n-2}2}} J_k({\bf p}r') J_k({\bf p}r) \phi_1 (r) r^{n-1} dr {\bf p} d{\bf p} = \phi_1 (r')
    \]
    where, for the last equality we used the classical (formal) orthogonality relation
    \[
    \int_0^\infty J_k({\bf p}r) J_k({\bf p}r') {\bf p} d{\bf p} = \frac{\delta(r-r')}{r}
    \]
    (see e.g. \cite{pondeleon}). 
   The computations for the lower component $f^{down}(r)=\begin{pmatrix} 0\\
\phi(r)
\end{pmatrix}$ is analogous and we omit it.

\item 
Let us assume $\mathcal P_k^{-1} = \mathcal P_k^*$. Then
    \[
    \lVert \mathcal P_k f \rVert_{L^2_{EdE}}^2 = \langle \mathcal P_k f, \mathcal P_k f \rangle_{L^2_{EdE}} = \langle \mathcal P_k^{-1} \mathcal P_k f , f \rangle_{L^2_{r^2 dr}} = \lVert f \rVert_{L^2_{r^2 dr}}
    \]
    where for the last equality we use $i),ii)$. We are led then to the study of the adjoint $\mathcal P_k^*$. Let $f \in L^2 ((0,\infty), r^2 dr)^2$ and $g \in L^2 (\R \setminus [-m,m], E dE)$ . Then
    \[
    \begin{split}
    \langle \mathcal P_k f,g \rangle_{L^2_{EdE}} & = \int_0^\infty (\mathcal H_k^{pos,+} \bar g^+ f_1 + i \mathcal H_k^{pos,-} \bar g^+ f_2 )r^2 dr + \int_0^\infty ( \mathcal H_{k+1}^{neg,+} \bar g^- f_1 + i \mathcal H_{k+1}^{neg,-} \bar g^- f_2 )r^2 dr \\
    & = \int_0^\infty \Big [ ( \overline{ \mathcal H_k^{pos,+} g^+ } + \overline{\mathcal H_{k+1}^{neg,+} g^-}) f_1 + ( \overline{-i \mathcal H_k^{pos,-} g^+ } + \overline{-i \mathcal H_{k+1}^{neg,-} g^-}) f_2 \Big ] r^2 dr \\
    & = \langle f, \mathcal P_k^{-1} g \rangle_{L^2_{r^2 dr}}.
    \end{split}
    \]
\item This is direct computation, and in fact is an immediate consequence of the definition of $\mathcal{P}_k$ and of the self-adjointness of the Dirac operator.
\end{enumerate}
\end{proof}

Proposition above, as we shall see in the next subsection, represents the stepping stone of our argument, as indeed allows for an explicit and versatile representation of the propagator $e^{it\D}$.

\subsection{Generalized Strichartz estimates (proof of Theorem \ref{freetheorem})}

We consider the Dirac propagator $e^{it\mathcal{D}}u_0$, that is the solution to problem
\begin{equation}\label{diracfree}
\begin{cases}
\displaystyle
 i\partial_tu+\mathcal{D} u=0\,,\quad u(t,x):\mathbb{R}_t\times\mathbb{R}_x^n\rightarrow\mathbb{C}^{N}\\
u(0,x)=u_0(x)
\end{cases}
\end{equation}
with initial condition $u_0\in L^2(\R^n)$. 
 Due to decomposition \eqref{spherdec} and Proposition \ref{freehankprop}, the solution to \eqref{diracfree} can be represented as
 \begin{equation*}
e^{it\mathcal{D}}u_0=\sum_{k\in\mathcal{A}_n}u_k(t,r)\cdot \Xi(\theta)= 
\sum_{k\in\mathcal{A}_n}e^{itd_k}u_{0,k}\cdot \Xi(\theta)=\sum_{k\in\mathcal{A}_n}\mathcal{P}_k^{-1}[e^{itE} (\mathcal{P}_k u_{0,k})(E)](r) \cdot \Xi(\theta)
 \end{equation*}
 where $u_{0,k}(r)$ are the radial components of the initial datum $u_0(x)$ in decomposition \eqref{spherdec}. For the $u_k(t,r)$ we have the following representation:
    \begin{equation}\label{radialsolrep}
    \begin{split}
    u_k (t,r) =\begin{pmatrix}
       u^1_k(t,r)\\
       u^2_k(t,r)
    \end{pmatrix}= e^{it d_k} u_{0,k} (r)  = \mathcal P_k^{-1} ( e^{it E} \mathcal P_k (u_{0,k}) (E)) (r) = \begin{pmatrix}
        \mathcal H_k ^{pos,+} & \mathcal H_k ^{neg,+}\\
        -i \mathcal H_{k+1} ^{pos,-} & -i \mathcal H_{k+1} ^{neg,-}
    \end{pmatrix} 
    \begin{pmatrix}
        e^{itE} (\mathcal P_k u_{0,k})^+\\
        e^{itE} (\mathcal P_k u_{0,k})^-
    \end{pmatrix}.
    \end{split}
    \end{equation}
Thanks to the orthogonality of the angular components $\Xi(\theta)$, when estimating the norms we will be able to write:
\begin{equation}\label{globalflow}
\|e^{it\mathcal{D}}u_0\|_{L^p_t L^q_{r^{n-1}dr}L^2_\theta}\leq
\left(\sum_{k\in\mathcal{A}_n}\|u_k(t,r)\|_{L^p_tL^q_{r^{n-1}dr}}^2\right)^{\frac12}.
\end{equation}

  Let us thus focus on the radial components of $u_k$ separately; \eqref{radialsolrep} yields the following representations

    \begin{equation}\label{freeu1}
    u^1_k(t,r) = \int_0^\infty  F_k^{free}({\bf p},r) \Big [ e^{it \sqrt{{\bf p}^2 +m^2}} (\mathcal P_k u_{0,k})^+(\sqrt{{\bf p}^2 +m^2}) + e^{-it \sqrt{{\bf p}^2 +m^2}} (\mathcal P_k u_{0,k})^-(- \sqrt{{\bf p}^2 +m^2}) \Big ] {\bf p} d{\bf p}
    \end{equation}
    and
   
     \begin{equation}\label{freeu2}
    u^2_k(t,r) = \int_0^\infty  G_k^{free}({\bf p},r) \Big [  e^{it \sqrt{{\bf p}^2 +m^2}} (\mathcal P_k u_{0,k})^+(\sqrt{{\bf p}^2 +m^2}) +  e^{-it \sqrt{{\bf p}^2 +m^2}} (\mathcal P_k u_{0,k})^-(- \sqrt{{\bf p}^2 +m^2}) \Big ] {\bf p} d{\bf p}
    \end{equation}
    where $F_k^{free}$, $G_k^{free}$ are given by either \eqref{2dfreesol} or \eqref{3dfreesol}.

 We thus need to provide estimates onto integrals in the form
    \begin{equation}\label{startingint}
    I^\pm_{k,free} = \int_0^\infty  H_k^{free}({\bf p},r) e^{\pm it \sqrt{{\bf p}^2 +m^2}} f(\pm\sqrt{{\bf p}^2 +m^2}) {\bf p} d{\bf p}
    \end{equation}
    with $f \in L^2 ((m,+\infty), EdE)$, $E= \sqrt{{\bf p}^2 +m^2}$ and where $H_k^{free}$ is either $F_k^{free}$ or $G_k^{free}$. 
    We shall focus on $I^+_{k,free}$, that is the {\em positive energy sector} as the terms $I^-_{k,free}$ can be dealt with with minor modifications\footnote{We anticipate the fact that this same remark will hold in the case of the Aharonov-Bohm magnetic potential, while for the Coulomb potential the relative sign of the charge and the energy will make a major difference. }.
We rely on a Paley-Littlewood type decomposition: let  $\phi = \chi_{(\frac 12, 1]}$, so that $\sum_{N \in 2^\Z} \phi \big ( \frac {\bf p} N \big ) =1$ for ${\bf p} \in (0,+\infty)$. Then, by the embedding $l^2 \hookrightarrow l^q$ for any $q \le + \infty$, we can estimate as follows 
    \begin{eqnarray}\label{estI}
   \nonumber     \lVert I^+_{k,free} \rVert_{L^p_t L^q_{r^{n-1} dr}}^2 & =& \Big \lVert \sum_{N \in 2^\Z} \int_0 ^\infty H_k^{free}({\bf p},r)  e^{it \sqrt{{\bf p}^2 +m^2}} f(\sqrt{{\bf p}^2 +m^2}) \phi \big ( \frac pN \big ) {\bf p} d{\bf p} \Big \rVert_{L^p_t L^q_{r^{n-1} dr}}^2  \\
        & \le& \sum_{R\in 2^\Z} \Big \lVert   \sum_{N \in 2^\Z} \int_0 ^\infty H_k^{free}({\bf p},r) e^{it \sqrt{{\bf p}^2 +m^2}} f(\sqrt{{\bf p}^2 +m^2}) \phi \big ( \frac pN \big ) {\bf p} d{\bf p} \Big \rVert_{L^p_t L^q_{r^{n-1} dr}[R,2R]}^2 \\
      \nonumber  & \le& \sum_{R\in 2^\Z} \bigg ( \sum_{N \in 2^\Z}  \Big \lVert   \int_0 ^\infty H_k^{free}({\bf p},r) e^{it \sqrt{{\bf p}^2 +m^2}} f(\sqrt{{\bf p}^2 +m^2}) \phi \big ( \frac pN \big ) {\bf p} d{\bf p} \Big \rVert_{L^p_t L^q_{r^{n-1} dr}[R,2R]} \bigg ) ^2.
    \end{eqnarray}
    We perform the change of variables: $y = \frac {\bf p} N$ and then $\rho = Nr$ and continue the chain of inequalities to get
\begin{equation*}
       \lVert I^+_{k,free} \rVert_{L^p_t L^q_{r^{n-1} dr}}^2  \le \sum_{R\in 2^\Z} \bigg ( \sum_{N \in 2^\Z}   {N^{2}} \Big \lVert   \int_0 ^\infty H_k^{free}(Ny,r)  e^{it \sqrt{(Ny)^2 +m^2}} f(\sqrt{(Ny)^2 +m^2}) \phi ( y) y dy \Big \rVert_{L^p_t L^q_{r^{n-1} dr}[R,2R]} \bigg ) ^2
       \end{equation*}
\begin{equation}\label{estII}
        \le  \sum_{R\in 2^\Z}  \bigg ( \sum_{N \in 2^\Z}   {N^{2}} {N^\frac{n-2}2 N^{- \frac nq}} \Big \lVert   \int_0 ^\infty H_k^{free}(y,\rho) e^{it \sqrt{(Ny)^2 +m^2}} f(\sqrt{(Ny)^2 +m^2}) \phi ( y) y dy \Big \rVert_{L^p_t L^q_{\rho^{n-1} d\rho}[NR,2NR]} \bigg ) ^2.
    \end{equation}
Now, the main step consists in proving Strichartz estimates on an interval for frequency localized solutions. The key result is thus the following

\begin{lemma}\label{localizedlemmafree}
  Let $n=2,3$, $h \in C_c^\infty$ with $\supp h \subset [\frac12, 1] = \colon I$ and let $\nu\geq0$.
  Then, for any $(p,q)\in(2,+\infty)\times(2+\infty)$ and $N\in 2^{\Z}$ the following estimate holds 
    \begin{equation}\label{locfreeest}
      \Big \lVert   \int_0 ^\infty \rho^{- \frac{n-2}2} J_{\nu}(\rho y) e^{it \sqrt{(Ny)^2 +m^2}} h(y) dy \Big \rVert_{L^p_t L^q_{\rho^{n-1} d\rho}[R,2R]}\leq C_N^{\frac 2p}  \|h\|_{L^{2}} \begin{cases}
          R^{\frac nq} & \quad \text{ if } R<1,\\
          R^{\frac 1q + \big (1- \frac 2q \big ) \beta(p)} & \quad \text{ if } R\ge1,
      \end{cases}
    \end{equation}
    with
    \begin{equation}\label{beta}
    \beta(p)=\begin{cases}
\frac1p-\frac{n-1}2\qquad {\rm if}\: p\in[2,4),\\
\frac1p-\frac{3n-4}6\qquad {\rm if}\: p\in[4,+\infty)
\end{cases}
\end{equation}
   and where the constant
    \begin{equation}\label{CN2}
    C_N\sim\begin{cases}
    N^{-\frac 12} \quad \text{if }\: N=2^n,\\
    N^{-1}  \quad \text{if} \: N=2^{-n}.
    \end{cases}
    \end{equation}
    is independent of $\nu$.
\end{lemma}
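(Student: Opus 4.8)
The plan is to reduce \eqref{locfreeest} to classical one‑dimensional oscillatory–integral estimates, exploiting the fact that the objects involved are genuine Bessel functions. First I would normalize the dispersion relation. Writing $\phi_N(y)=\sqrt{(Ny)^2+m^2}$, on $\supp h\subset[\tfrac12,1]$ the map $y\mapsto\phi_N(y)$ is smooth and strictly increasing, with $\phi_N'\sim N$, $|\phi_N''|\lesssim m^2/N$ when $N\gtrsim m$, and $\phi_N'\sim N^2y/m$, $\phi_N''\sim N^2/m$ when $N\lesssim m$; the additive constant $\phi_N(0)$ contributes only a unimodular factor. Rescaling time by $t\mapsto t/N$ (resp. $t\mapsto (m/N^2)t$) produces exactly the Jacobian factor $C_N^{2/p}$ of \eqref{CN2} in the $L^p_t$‑norm and reduces the problem to a uniform family of integrals $\int_0^\infty\rho^{-(n-2)/2}J_\nu(\rho y)e^{it\psi(y)}h(y)\,dy$, where $\psi$ is smooth and strictly increasing with $\psi'\sim1$ and $0\le\psi''\lesssim1$ on $[\tfrac12,1]$, uniformly in the residual parameter $m/N$ or $N/m$; smoothness of $h$ is used to integrate by parts freely where the phase is non‑stationary.

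Next I would split the spatial variable. For $R<1$ the Bessel argument satisfies $\rho y\lesssim1$, so $|J_\nu(\rho y)|\lesssim(\rho y)^\nu$ uniformly in $\nu\ge0$; since $\psi$ is a smooth diffeomorphism, after the change of variables $u=\psi(y)$ the inner integral is a Fourier transform in $t$ of an $L^{p'}_u$ function, so Hausdorff--Young gives the $L^p_t$‑bound for every $p\ge2$ with no loss, and integrating the resulting power of $\rho$ against $\rho^{n-1}d\rho$ on $[R,2R]$ yields the factor $R^{n/q}$ (using $\nu\ge(n-2)/2$ for the relevant orders). For $R\ge1$ one inserts the asymptotic expansion $J_\nu(z)=z^{-1/2}\big(a_+(\nu,z)e^{iz}+a_-(\nu,z)e^{-iz}\big)$ valid for $z\gtrsim 1+\nu$ (the contribution of orders $\nu\gg\rho y\sim R$ being exponentially small and discarded), reducing to two‑parameter oscillatory integrals $\int e^{i(\pm\rho y+t\psi(y))}b(y)\,dy$ with amplitude $\sim(\rho y)^{-1/2}$ and phase of nonvanishing $y$‑curvature $\sim\psi''$, i.e. the extension operators of curves with curvature $\sim1$; classical van der Corput/mixed‑norm Strichartz estimates for these, combined with H\"older in $\rho$ on $[R,2R]$ to pass from $L^\infty_\rho$‑ to $L^q_\rho$‑norms, give the exponent $R^{1/q+(1-2/q)\beta(p)}$ with $\beta(p)$ as in \eqref{beta}, the threshold $p=4$ being precisely the endpoint of one‑dimensional Strichartz admissibility; the transition (turning‑point) region $\rho y\sim\nu$, where only $|J_\nu|\lesssim\nu^{-1/3}$ holds, is isolated into a $\rho$‑annulus of width $\sim\nu^{1/3}$ and handled with that weaker pointwise bound, which is exactly what forces the rate $(3n-4)/6$ for $p\ge4$.

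The main obstacle is the uniformity in the Bessel order $\nu$ in the regime $R\ge1$: one must control the three ranges $\nu\ll R$ (oscillatory, $|J_\nu|\sim(\rho y)^{-1/2}$), $\nu\sim R$ (Airy/turning‑point, $|J_\nu|\sim\nu^{-1/3}$) and $\nu\gg R$ (exponentially decaying) with a single $\nu$‑independent constant, and one must convert the dispersive $L^\infty_\rho$‑type bounds into the mixed norm $L^q_{\rho^{n-1}d\rho}[R,2R]$ so that the precise power in \eqref{locfreeest} and the break of $\beta(p)$ at $p=4$ emerge. As remarked in the introduction, in this Bessel setting this step is comparatively soft; everything else is $N$‑bookkeeping, which collapses entirely into the constant $C_N^{2/p}$.
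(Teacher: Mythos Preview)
Your outline is in the right spirit but takes a substantially different route from the paper for $R\ge1$, and several of your steps remain underspecified.

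The paper's argument is more elementary and completely decouples the $t$- and $\rho$-variables. First it proves the Hausdorff--Young bound \eqref{HY} by the single change of variable $z=\sqrt{(Ny)^2+m^2}$; this already produces $C_N^{2/p}$ with no time rescaling. Then, for each fixed $\rho$, the inner integral is viewed as $F[\rho^{-(n-2)/2}J_\nu(\rho\,\cdot)\,h](t)$, so Minkowski plus \eqref{HY} reduces everything to estimating $\|\rho^{-(n-2)/2}J_\nu(\rho y)\|$ (and its $\rho$-derivative) in $L^q_\rho[R,2R]$, \emph{pointwise in $y\in[\tfrac12,1]$}. These are supplied by a separate lemma (Lemma~\ref{lembessel}) recording uniform-in-$\nu$ bounds on $\|\rho^{-(n-2)/2}J_\nu\|_{L^q[R,2R]}$ coming from the three-region Bessel estimate. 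For $R<1$ the paper passes to $L^q_\rho$ via the interpolation $L^2_\rho$--$\dot H^1_\rho$; for $R\ge1$ it interpolates between $L^2_{\rho^{n-1}d\rho}$ and $L^\infty_\rho$, the latter obtained through $W^{1,p}(\Omega)\hookrightarrow L^\infty(\Omega)$. The break in $\beta(p)$ at $p=4$ then enters \emph{purely} through the $L^p_\rho[R,2R]$-norm of $J_\nu$ in the Airy region $\rho\sim\nu$: the integral $\int_{|\rho-\nu|\lesssim\nu}(|\rho-\nu|+\nu^{1/3})^{-p/4}\,d\rho$ is treated differently according to whether $p<4$ or $p\ge4$. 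No joint $(t,\rho)$ oscillatory-integral or Strichartz-for-curves estimate is used.

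Your approach for $R\ge1$---inserting the Bessel asymptotics and treating $\int e^{i(\pm\rho y+t\psi(y))}b(y)\,dy$ as a two-parameter dispersive integral---is in principle viable, but (i) the uniform-in-$\nu$ control of the amplitudes $a_\pm(\nu,z)$ and of the patching between oscillatory and Airy regimes is precisely the delicate point you flag, and the paper sidesteps it entirely; (ii) your attribution of the $p=4$ threshold to ``one-dimensional Strichartz admissibility'' is suggestive but not the actual mechanism here---$\beta(p)$ is a static feature of $\|J_\nu\|_{L^p_\rho}$ near the turning point, with no $t$ involved; (iii) it is not clear from your sketch how H\"older-in-$\rho$ on a single dyadic annulus recovers the exact exponent $R^{\frac1q+(1-\frac2q)\beta(p)}$ uniformly in $\nu$. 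None of these is fatal, but as written the $R\ge1$ argument would need real work to close, whereas the paper's Minkowski + Sobolev-embedding reduction to Lemma~\ref{lembessel} is essentially two lines.
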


\begin{proof}(of Lemma \ref{localizedlemmafree})
\underline{ \emph{Step 1: preliminary estimate}} We start by proving the following. Let 
\[
F[h](t) \coloneqq \int_0^\infty e^{it \sqrt{(Ny)^2 + m^2} } h(y) dy, \quad N \in 2^\Z, \quad h \in C_c^\infty, \quad \supp h \subset I=[\frac12, 1],
\]
then
\begin{equation}
\label{HY}
\lVert F[h](\cdot) \rVert_{L^p} \le C_N^\frac 2p \lVert h \rVert_{L^{p'}} 
\end{equation}
for any $p \in [2,+\infty]$ and $p'$ such that $\frac 1p + \frac 1{p'} =1$. The proof follows from the Hausdorff-Young inequality. Using the change of variable $z=\sqrt{(Ny)^2 + m^2}$, we write 
\begin{equation}
    F[h](t) = \int_{\mathbb R} e^{it z } H(z) dz=\widehat{H}(-t)
    \end{equation}
    where $H(z)=\chi_{z\geq m}h\left(\frac{1}{N}\sqrt{z^2-m^2}\right)\frac{z}{N\sqrt{z^2-m^2}}$ and $\widehat{H}$ is its Fourier transform. Then we write
\[
     \int_{\mathbb R} \vert H(z)\vert^{p'} dz
     =\int_I \vert h(y)\vert^{p'} \left(\frac{\sqrt{(Ny)^2 + m^2}}{N^2 y}\right)^{p'-1}dy.
         \]
    We remark that on the interval $I$, 
         $\frac{\sqrt{(Ny)^2 + m^2}}{N^2 y}\leq \sqrt{N^{-2} +4 m^2 N^{-4}}$.
    Thus, $H\in L^{p'}(\mathbb R)$ with
    \begin{equation*}
    \Vert H\Vert_{L^{p'}(\mathbb R)}\leq (N^{-2} +4 m^2 N^{-4})^{\frac{1}{2p}}\, \Vert h\Vert_{L^{p'}(I)}\sim\begin{cases}
    N^{-1/p} \Vert h\Vert_{L^{p'}} \quad \text{if }\: N=2^n,\\
    (2mN^{-2})^{1/p} \Vert h\Vert_{L^{p'}} \quad \text{if} \: N=2^{-n}.
    \end{cases}
   \end{equation*}
    Combining this estimate with the Hausdorff-Young inequality $\lVert\widehat{H}\rVert_{L^p} \lesssim
    \lVert H \rVert_{L^{p'}}$, we get a bound of the form \eqref{HY} with $C_N$ satisfying \eqref{CN2}.

\emph{\underline{Step 2: proof of \eqref{locfreeest} for $R<1$.}} 
Let $p \in [2,+\infty)$ and $\Omega$ be an interval. Then, by the embedding $\dot H^{\frac 12 - \frac 1q} (\Omega) \hookrightarrow L^q(\Omega)$, that holds for every $q \in[2, +\infty)$ and interpolation, we have 
\[
\begin{split}
&\Big \lVert   \int_0 ^\infty \rho^{-\frac{n-2}2} J_{\nu}(\rho y)  e^{it \sqrt{(Ny)^2 +m^2}} h(y) dy \Big \rVert_{L^p_t L^q_{ d\rho}[R,2R]}   \\
& \lesssim \Big \lVert   \int_0 ^\infty \rho^{-\frac{n-2}2} J_{\nu}(\rho y)  e^{it \sqrt{(Ny)^2 +m^2}} h(y) dy \Big \rVert_{L^p_t \dot H^{\frac 12 - \frac 1q}_{ d\rho}[R,2R]} \\
&  \lesssim \Big \lVert   \int_0 ^\infty \rho^{-\frac{n-2}2} J_\nu(\rho y)  e^{it \sqrt{(Ny)^2 +m^2}} h(y) dy \Big \rVert_{L^p_t L^2_{ d\rho}[R,2R]}^{\frac 12 + \frac 1q} \times \Big \lVert   \int_0 ^\infty \rho^{-\frac{n-2}2} J_{\nu}(\rho y)  e^{it \sqrt{(Ny)^2 +m^2}} h(y) dy \Big \rVert_{L^p_t \dot H^1_{ d\rho}[R,2R]}^{\frac 12 - \frac 1q}.
\end{split}
\]

From Minkowski inequality and \eqref{HY} we have 
\[
\begin{split}
    & \Big \lVert   \int_0 ^\infty \rho^{-\frac{n-2}2} J_\nu(\rho y)  e^{it \sqrt{(Ny)^2 +m^2}} h(y) dy \Big \rVert_{L^p_t L^2_{ d\rho}[R,2R]} \le C_N^\frac 2p \Big \lVert \rho^{-\frac{n-2}2} J_\nu(\rho y) h(y) \Big \rVert _{L^2_{d\rho} [R,2R] L^{p'}_y} \\
    &  \le C_N^{\frac 2p}  \Big \lVert h(y) \lVert \rho^{-\frac{n-2}2} J_\nu(\rho) \rVert_{L^2_{d\rho} [yR, 2yR]} \Big \rVert_{L^{p'}_y} \lesssim C_N^{\frac 2p} R^{\frac12} \lVert h \rVert_{L^2(I)}
\end{split}
\]
where in the last inequality we have used Lemma \ref{lembessel}.
 Analogously, we have that 
\[
\begin{split}
   & \Big \lVert   \int_0 ^\infty \rho^{-\frac{n-2}2} J_\nu(\rho y)  e^{it \sqrt{(Ny)^2 +m^2}} h(y) dy \Big \rVert_{L^p_t \dot H^1_{ d\rho}[R,2R]} \lesssim C_N^{\frac 2p} \Big \lVert h(y) \lVert (\rho^{-\frac{n-2}2} J_\nu(\rho))' \rVert_{L^2_{d\rho} [yR, 2yR]} \Big \rVert_{L^{p'}_y} \\
   & \lesssim C_N^{\frac 2p } R^{\frac12-1} \lVert h \rVert_{L^2(I)}
\end{split}
\]
where in the last inequality we have used Lemma \ref{lembessel} again. 
Therefore, we conclude that 
\[
\Big \lVert   \int_0 ^\infty \rho^{-\frac{n-2}2} J_\nu(\rho y)  e^{it \sqrt{(Ny)^2 +m^2}} h(y) dy \Big \rVert_{L^p_t L^q_{ d\rho}[R,2R]} \lesssim C_N^{\frac 2p} R^{\frac 1q}\lVert h \rVert_{L^2(I)}.
\]
getting \eqref{locfreeest}.\\ 
\emph{\underline{Step 3: proof of \eqref{locfreeest} for $R\geq 1$.}} We aim to deduce \eqref{locfreeest} by interpolation between
\begin{equation}
\label{est_Rbig_infty}
    \Big \lVert   \int_0 ^\infty \rho^{-\frac{n-2}2} J_\nu(\rho y)  e^{it \sqrt{(Ny)^2 +m^2}} h(y) dy \Big \rVert_{L^p_t L^\infty_{ d\rho}[R,2R]}   \lesssim C_N^{\frac 2p } R^{\beta(p)} \lVert h \rVert_{L^2(I)},
\end{equation}
\begin{equation}
\label{est_Rbig_2}
    \Big \lVert   \int_0 ^\infty \rho^{-\frac{n-2}2} J_\nu(\rho y)  e^{it \sqrt{(Ny)^2 +m^2}} h(y) dy \Big \rVert_{L^p_t L^2_{\rho^{n-1} d\rho}[R,2R]}   \lesssim  C_N^{\frac 2p } R^{\frac 12} \lVert h \rVert_{L^2(I)}.
\end{equation}
In order to get \eqref{est_Rbig_infty}, we exploit the embedding $W^{1,p}(\Omega) \hookrightarrow L^\infty (\Omega)$, $\Omega $ an interval. Therefore, we have, by making use of inequality \eqref{HY}, 
\[
\begin{split}
 &\Big \lVert   \int_0 ^\infty \rho^{-\frac{n-2}2} J_\nu(\rho y)  e^{it \sqrt{(Ny)^2 +m^2}} h(y) dy \Big \rVert_{L^p_t L^\infty_{ d\rho}[R,2R]} \\ &\le \bigg ( \Big \lVert   \int_0 ^\infty \rho^{-\frac{n-2}2} J_\nu(\rho y)  e^{it \sqrt{(Ny)^2 +m^2}} h(y) dy \Big \rVert_{L^p_t L^p_{ d\rho}[R,2R]}   ^p + \Big \lVert   \int_0 ^\infty (\rho^{-\frac{n-2}2} J_\nu(\rho y))'  e^{it \sqrt{(Ny)^2 +m^2}} h(y) dy \Big \rVert_{L^p_t L^p_{ d\rho}[R,2R]} ^p \bigg )^\frac 1p\\
 & \lesssim C_N^{\frac 2p } \bigg ( \lVert \rho^{-\frac{n-2}2} J_\nu (\rho y) h(y) \rVert_{L^{p'}_y (I); L^p_{d\rho}[R,2R]}^ p  + \lVert (\rho^{-\frac{n-2}2} J_\nu (\rho y))' h(y) \rVert_{L^{p'}_y (I); L^p_{d\rho}[R,2R]}^ p\bigg )^\frac 1p\\
 & \lesssim C_N^{\frac 2p }  (R^{p \beta (p)})^\frac 1p\lVert h \rVert_{L^2(I)} = C_N^{\frac 2p } R^{\beta(p)} \lVert h \rVert_{L^2(I)}
\end{split} 
\]
where we have used Lemma \ref{lembessel}.
Estimate \eqref{est_Rbig_2} can be deduced similarly. Notice that we estimate here the $L^2$-norm with the measure $\rho^{n-1} d\rho$. Indeed
\[
\begin{split}
  & \Big \lVert   \int_0 ^\infty \rho^{-\frac{n-2}2} J_\nu(\rho y)  e^{it \sqrt{(Ny)^2 +m^2}} h(y) dy \Big \rVert_{L^p_t L^2_{\rho^{n-1} d\rho}[R,2R]}   \lesssim   C_N^{\frac 2p } R^\frac {n-1}2 \lVert \rho^{-\frac{n-2}2} J_\nu (\rho y) h(y) \rVert_{L^{p'}_y (I); L^2_{d\rho}[R,2R]}\\
   &  \lesssim   C_N^{\frac 2p } R^{\frac 12}  \lVert h \rVert_{L^2(I)}.
\end{split}
\]
Therefore, by interpolation
\[
 \Big \lVert   \int_0 ^\infty \rho^{-\frac{n-2}2} J_\nu(\rho y) e^{it \sqrt{(Ny)^2 +m^2}} h(y) dy \Big \rVert_{L^p_t L^q_{\rho^{n-1} d\rho}[R,2R]}\leq C_N^\frac 2p  R^{ \frac 1q + ( 1 - \frac 2q  ) \beta(p)}\lVert h \rVert_{L^2(I)}
\]
and this concludes the proof of the Lemma.
\end{proof}

We now plug \eqref{locfreeest} in \eqref{estII}: we thus choose either $\nu=\nu(k)\in \N$ if $n=2$ or $\nu=\nu(k)\in \N+\frac12$ if $n=3$. This yields (notice that we are here omitting the normalization terms $N^\pm(E)$ as they are cleary bounded by $1$ for $|E|\geq m$) \footnote{Notice that in fact, it might be possible to exploit the factor $N^-(E)$ to gain some additional regularity onto the estimates for low-frequencies for the ``negative spinor'' $f^{down}$, as indeed $N^-(E)\sim \frac{\bf p}{2m}$ for $\bf p\sim 0$, but we prefer not to insist on this as we are interested on estimates on the complete spinor. }

\begin{eqnarray}\label{finalstep}
\nonumber
\eqref{estII} &\leq &
\sum_{R\in 2^\Z}  \bigg ( \sum_{N \in 2^\Z}  C_N^{\frac2p}N^{\frac n2+1-\frac nq} Q_n(NR) \| f(\sqrt{(N{\bf p}))^2+m^2})\phi({\bf p})\|_{L^2_{{\bf p} d{\bf p}}} \bigg ) ^2
\\
&=&
\sum_{R\in 2^\Z}  \bigg ( \sum_{N \in 2^\Z}  C_N^{\frac2p}{N^{\frac n2 +1 - \frac nq-1}} Q_n(NR) \| f(\sqrt{{\bf p}^2+m^2})\phi(\frac{{\bf p}}N)\|_{L^2_{{\bf p} d{\bf p}}} \bigg ) ^2
\end{eqnarray}
with
\begin{equation*}
C_N\sim\begin{cases}
    N^{-\frac 12} \quad \text{if }\: N=2^n,\\
    N^{-1}  \quad \text{if} \: N=2^{-n}.
    \end{cases}
    \end{equation*}
    and
    \begin{equation*}
Q_n(NR)=\begin{cases}
          (NR)^{\frac nq } & \quad \text{ if } R<1,\\
          (NR)^{\frac 1q +  (1- \frac 2q ) \beta(p)} & \quad \text{ if } R\ge1
          \end{cases}
\end{equation*}
We now take $p,q$ such that 
\begin{equation}\label{freerange}
\frac nq >0,\qquad \frac 1q + \Big (1- \frac 2q \Big ) \beta(p)<0
\end{equation}
so that
$$
\sup_{R\in 2^\Z}\sum_{N\in 2^\Z}Q(NR)<+\infty,\qquad 
\sup_{N\in 2^\Z}\sum_{R\in 2^\Z}Q(NR)<+\infty.
$$
Recalling now that $f({\bf p})=(\mathcal{P}_ku_0)^+({\bf p})$, denoting with 
$A_{N,k}=C_N^{\frac2p}N^{\frac{n}2-\frac nq}\|f(\sqrt{{\bf p}^2+m^2})\phi(\frac{{\bf p}}N)\|_{L^2_{{\bf p}d{\bf p}}} $ and using Schur test Lemma we obtain
\begin{eqnarray*}
\left(\sum_{R\in 2^\Z} (\sum_{N\in 2^\Z} Q(NR)A_{N,k})^2\right)^{\frac12}&=&
\sup_{\| B_R\|_\ell^2\leq 1}\sum_{R\in2^\Z}\sum_{N\in 2^\Z}Q(NR)A_{N,k}B_R
\\
&\leq&C\left(\sum_{N\in 2^\Z}|A_{N,k}|^2\right)^{\frac12}.
\end{eqnarray*}
Putting all together and recalling \eqref{globalflow}, we eventually obtain
\begin{eqnarray}\label{freefinalstep}
\|e^{it\mathcal{D}}u_0^+\|^2_{L^p_t L^q_{r^2dr}L^2_\theta} &\leq&\sum_{k\in\mathcal{A}_n}
\sum_{R\in 2^\Z} (\sum_{N\in 2^\Z} Q(NR)A_{N,k})^2)^{\frac12}
\\
\nonumber
&\leq& \sum_{k\in\mathcal{A}_n}\sum_{N\in 2^\Z}
C_N^{\frac2p}{N^{\frac n2 - \frac nq}}
\| (\mathcal{P}_k u_0)^+(\sqrt{{\bf p}^2+m^2})\phi(\frac {\bf p}{N})\|_{L^2_{{\bf p} d{\bf p}}}.
\end{eqnarray}
The estimate for $e^{it\mathcal{D}}u_0^-$ is completely analogous, and we omit it.

As a consequence, recalling the fact that $\mathcal{P}_k$ is an isometry on $L^2$, we obtain the following generalized Strichartz estimates:
\begin{itemize}
\item {\bf ``Low energies''} (i.e. $N\sim2^{-n}$, that is ${\bf p}\sim m$). Given  $u_0\in L^2$, setting 
$$
u_{0}^{low}=\sum_{k\in\mathcal{A}_n}u_{0,k}^{low}=\sum_{k\in\mathcal{A}_n}\mathcal{P}_k^{-1}\chi_{\{{\bf p}\leq 1\}}\mathcal{P}_ku_0,
$$
estimate \eqref{freefinalstep} yields
\begin{equation}\label{lffreeest}
\|e^{it\mathcal{D}}u_{0}^{low}\|^2_{L^p_t L^q_{r^{n-1}dr}L^2_\theta}
\leq
\sum_{k\in\mathcal{A}_n}{N^{\frac n2 - \frac2p-\frac nq}}
\| (\mathcal{P}_k u_0)^+(\sqrt{{\bf p}^2+m^2})\phi(\frac {\bf p}{N})\|_{L^2_{{\bf p} d{\bf p}}}=
\|u_{0,k}^{low}\|_{H^{\frac n2-\frac2p-\frac nq}}
\end{equation}
\item {\bf ``High energies''}, (i.e. $N\sim2^{n}$, that is ${\bf p}\gg m$). Given  $u_0\in L^2$, setting for any $k\in\mathcal{A}_n$
$$
u_{0}^{high}=u_{0}-u_{0}^{low}
$$
with $u_{0,k}^{low}$ as above, estimate \eqref{freefinalstep} yields
\begin{equation}\label{hffreeest}
\|e^{it\mathcal{D}}u_{0,k}^{high}\|^2_{L^p_t L^q_{r^{n-1}dr}L^2_\theta}
\leq
\|u_{0,k}^{low}\|_{H^{\frac n2-\frac1p-\frac nq}}.
\end{equation}
\end{itemize}

Then, by interpolating with the standard estimate $\|e^{it\mathcal{D}}u_0\|_{L^\infty_tL^2_x}\leq \|u_0\|_{L^{2}}$ one gets the full set given by \eqref{genstrichfreelow}-\eqref{genstrichfreehigh}, and the proof is concluded.

    \section{Dirac-Aharonov Bohm system}\label{ABsec}

In this section we deal with the Dirac equation in Aharonov-Bohm magnetic potential, that is system \eqref{DiracAB}, and we prove Theorem \ref{ABtheorem}. The proof closely follows the one of the free case given in previous section. The computations are almost the same as in the free case, as indeed the generalized eigenfunctions of the operator $\mathcal{D}_{AB}$ still are Bessel functions. On the other hand, the key difference here is given by the fact that the presence of the magnetic potential ``produces a singularity'', meaning that (regardless of the choice of the self-adjoint extension) the domain of the operator $\mathcal{D}_{AB}$ will contain functions that are unbounded near zero. Equivalently, this means that (some of) the generalized eigenfunctions of the operator present a singularity in the origin (i.e. they will be Bessel functions of {\em negative} order). The presence of such a singularity  will reflect in a limitation on the range of admissible Strichartz pairs.

\subsection{Preliminaries}\label{preliminariesAB}

Let us start by briefly recalling some useful generalities about the self-adjoint extensions and spectral theory for the  the Dirac operator in Aharonov-Bohm magnetic potential $\DAB$, defined by \eqref{op:D}-\eqref{AB}.
By making use of spherical harmonics decomposition (the 2d version given by \eqref{decomp2}-\eqref{circ_harm}), 
the action of the operator $\DAB$ gets decomposed as follows
\begin{equation}\label{raddiropab}
  \DAB=
  \bigoplus_{k\in \mathbb{Z}}d_{k}^{AB}\otimes I_{2}
  \qquad
  d_{k}^{AB}:=
  \begin{pmatrix}
    -m & i(\frac d{dr}+\frac{k+1-\alpha}r)\\
    i(\frac d{dr}+\frac{\alpha-k}r) & m
  \end{pmatrix}, 
    \qquad
  I_2:=
  \begin{pmatrix}
    1 & 0 \\
  0 & 1 
  \end{pmatrix}.
\end{equation}

The self-adjoint extensions of the operator $\DAB$, initially defined on the minimal domain $C^\infty_0(\R^2)$ have been widely investigated, and are now well understood (see \cite{gerb}, \cite{dolestlos}, \cite{tam}, \cite{borcorten}. The operator $\DAB$ in fact is not essentially self adjoint (in particular, the operator $d_0^{AB}$ isn't); for any value of $\alpha\in(0,1)$, $\alpha\neq1/2$, it is possible to select a {\em distinguished} self-adjoint extension by requiring the condition that its domain embeds in $H^{1/2}$. Notice that this choice retrieves the ones in the physical literature for $\alpha\in(0,1/2)$, see \cite{gerb}, \cite{tam}. 
In the case $\alpha=1/2$ no extension that satisfies this condition exists: in this particular case, we shall (arbitrarily) select the one such that the singular term will be inherited entirely by the second component of the spinor. We should indeed stress the fact that for any value of $\alpha\in (0,1)$, every choice of the domain for the self-adjoint extension of $\DAB$ will include functions that exhibit a singularity in $r=0$: our choice of the distinguished extension is precisely the one that minimizes this singularity.

\medskip

Once the self-adjoint extension is fixed, we now need to write down the eigenfunctions of the operator $\DAB$. We are thus interested in solving the eigenvalue equation
\begin{equation}\label{eq:geneg}
d_{k}^{AB} \Psi_k(E, r)=d_{k}\left(\begin{matrix} \psi_k^1(E, r)\\ \psi_k^2(E, r)\end{matrix}\right)=E  \left(\begin{matrix} \psi_k^1(E, r)\\ \psi_k^2(E, r)\end{matrix}\right)
\end{equation}
with $d_k^{AB}$ given by \eqref{raddiropab}. This system is equivalent to the following
\begin{equation}\label{sys-bessel'}
\begin{cases}
  \frac{d^2}{dr^2} \psi^1_k+\frac1r  \frac{d}{dr} \psi^1_k+((E^2-m^2)-\frac{(k-\alpha)^2}{r^2})\psi^1_k=0,\\
    \frac{d^2}{dr^2} \psi^2_k+\frac1r  \frac{d}{dr} \psi^2_k+((E^2-m^2)-\frac{(k+1-\alpha)^2}{r^2})\psi^2_k=0
  \end{cases}
\end{equation}
which, again, is a system of Bessel equations (the AB magnetic potential has the effect of ``shifting'' the order of the Bessel functions). In our choice of the self-adjoint extension\footnote{Choosing a self-adjoint extension implies fixing boundary conditions for the functions of the domain, and thus the choice of the solutions to the eigenfunction equation.}, this system has the following solutions for $E\in(-\infty,-m]\cup[m,+\infty)$:

  \begin{equation}\label{psiAB}
    \psi_{k}^{AB}(E, r)= \begin{pmatrix} F_k^{AB}(E,r) \\
G_k^{AB}(E,r)
\end{pmatrix}=
    \frac{1}{\sqrt{2|E|}}
    \begin{pmatrix}
     N^+(E) J_{|k-\alpha|}({\bf p}r) \\
     - i {\rm sgn}(E)N^-(E)J_{|k-\alpha|+{\rm sgn}(k)}({\bf p}r)
    \end{pmatrix},\qquad {\rm if}\:k\in\Z^*
  \end{equation}
  and
  \begin{equation}\label{psiAB2}
    \psi_{0}^{AB}(E, r)= \begin{pmatrix}F_0^{AB}(E,r) \\
 G_0^{AB}(E,r)
\end{pmatrix}=
    \frac{1}{\sqrt{2|E|}}
    \begin{pmatrix}
     N^+(E)J_{-\alpha}({\bf p}r) \\
     - i {\rm sgn}(E)N^-(E)J_{1-\alpha}({\bf p}r)
    \end{pmatrix}
  \end{equation}
  where we recall ${\bf p}=\sqrt{E^2-m^2}$ and $N^\pm (E)$ are given by \eqref{Enorm}.
Notice that the above functions do not belong to the domain of $\DAB$, and are in fact generalized eigenfunctions. Notice also that the generalized eigenfunction $\psi_0^{AB}$, corresponding to the choice $k=0$, exhibits a singularity as $r^{-\alpha}$ in the origin.

\begin{remark}
It is important to remark that with our choice of the self-adjoint extension, the point spectrum is empty. For (almost) all the other choices, an eigenvalue in the gap $(-m,m)$ appears. We refer to \cite{gerb} for the details and the explicit computations.
\end{remark}

We are again in position to define the relativistic Hankel transform:
\begin{definition}
Let $f(r)=\begin{pmatrix} f_1(r) \\
f_2(r)
\end{pmatrix}\in L^2(\R^2,\C^2)$. For any $k\in\Z$ we define the following operator
\begin{equation}\label{ABhank}
\mathcal P_k^{AB} f(E) \coloneqq \int_0^{+\infty} \psi^{AB}_k(E,r)^T f(r)r dr
\end{equation}
where $\psi^{AB}_k$ is given by \eqref{psiAB}. 
\end{definition}

Then the following Proposition holds.
\begin{proposition}\label{ABankprop}
Let $f\in Dom(d_k^{AB})$. The operator 
$$\mathcal P_k^{AB} \colon [L^2((0,+\infty), r dr)]^2 \rightarrow L^2 (\R \setminus [-m,m], E dE)$$ defined by \eqref{ABhank} satisfies for any $k\in\Z$ the following properties:
\begin{enumerate}
\item $\mathcal P_k^{AB}$ is invertible and the inverse $\mathcal (P_k^{AB})^{-1} \colon L^2(\R \setminus [-m,m], E dE) \rightarrow [L^2((0,+\infty), r dr )]^2$  is given by the sequence 
\begin{eqnarray}\label{abinv}
\nonumber
\mathcal (P_k^{AB})^{-1}  \phi (r') 
&=&\int_{\R\backslash [-m,m]} \begin{pmatrix}
F_k^{AB}(E,r) & F_k^{AB}(E,r)\\
  -i G_k^{AB}(E,r) & -iG_k^{AB}(E,r)
\end{pmatrix} 
\begin{pmatrix}
    \phi^+(E) \\ \phi^-(E)
\end{pmatrix}EdE\\
\end{eqnarray}
where $F_k^{AB}$ and $G_k^{AB}$ are given by \eqref{psiAB}-\eqref{psiAB2}
and where $\phi^\pm (E) = \phi (E) \chi_{(\pm m, \pm \infty)}(E)$.

\item $\mathcal{P}_k^{AB}$ is an isometry.
\item $(\mathcal P_k^{AB} d_k^{AB} f) (E)= E \mathcal P_k^{AB}f(E)$.
\end{enumerate}
\end{proposition}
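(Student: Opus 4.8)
\emph{Sketch of proof.} The plan is to reproduce, essentially line by line, the argument used for Proposition~\ref{freehankprop}: the generalized eigenfunctions \eqref{psiAB}--\eqref{psiAB2} are again (rescaled) Bessel functions, the only change being that the orders get shifted from $|k|,|k|+\sgn(k)$ to $|k-\alpha|,|k-\alpha|+\sgn(k)$ (with the convention $\sgn(0)=1$). The point that makes the whole machinery go through unchanged is that, for $\alpha\in(0,1)$, all the Bessel orders appearing stay strictly above $-1$: for $k\ge1$ they are $k-\alpha$ and $k+1-\alpha$; for $k\le-1$ they are $|k|+\alpha$ and $|k|+\alpha-1\ge\alpha$; and for $k=0$ they are $-\alpha\in(-1,0)$ and $1-\alpha$. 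In this range the Hankel transform of order $\mu$ is unitary on $L^{2}((0,+\infty),r\,dr)$, so the formal completeness relation $\int_{0}^{\infty}J_{\mu}({\bf p}r)J_{\mu}({\bf p}r')\,{\bf p}\,d{\bf p}=\delta(r-r')/r$ remains available for every order we need.

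For item~(1) I would split $f=f^{up}+f^{down}$ with $f^{up}=(\phi,0)^{T}$, $f^{down}=(0,\phi)^{T}$, and compute $(\mathcal P_k^{AB})^{-1}\mathcal P_k^{AB}f^{up}$ directly, exactly as in the free case. Inserting the definition \eqref{ABhank} into the two columns of \eqref{abinv} and summing the positive- and negative-energy contributions, the terms carrying $J_{|k-\alpha|+\sgn(k)}$ (those coupling to $\phi_2$) cancel because they enter with opposite signs, while the diagonal terms recombine through the algebraic identity $(\mathcal N^{+}({\bf p}))^{2}+(\mathcal N^{-}({\bf p}))^{2}=1$ recorded in the proof of Proposition~\ref{freehankprop}; a final application of the Hankel completeness relation then collapses the double integral (in $r$ and in ${\bf p}$) to $\phi(r')$. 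The lower component is symmetric. This shows that \eqref{abinv} defines a left inverse; reading the same computation backwards (or a routine density argument on $C_{c}^{\infty}$ data) upgrades it to a two-sided inverse.

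Item~(2) then follows formally: once $(\mathcal P_k^{AB})^{-1}=(\mathcal P_k^{AB})^{*}$ is checked one obtains $\|\mathcal P_k^{AB}f\|_{L^{2}_{E\,dE}}^{2}=\langle(\mathcal P_k^{AB})^{-1}\mathcal P_k^{AB}f,f\rangle_{L^{2}_{r\,dr}}=\|f\|_{L^{2}_{r\,dr}}^{2}$, and the identification of the adjoint is the same Fubini computation displayed for $\mathcal P_k^{*}$ in Proposition~\ref{freehankprop}, now with weight $r\,dr$ and the shifted orders. Item~(3) is once more a direct computation: integrating by parts in \eqref{ABhank} using the explicit form \eqref{raddiropab} of $d_k^{AB}$ and the fact that $\psi_k^{AB}(E,\cdot)$ solves $d_k^{AB}\psi_k^{AB}=E\,\psi_k^{AB}$ --- equivalently, invoking the symmetry of $d_k^{AB}$ on its domain --- transfers $d_k^{AB}$ from $f$ onto $\psi_k^{AB}$, producing the factor $E$.

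The only step that genuinely requires care --- and the place where the hypothesis $f\in\mathrm{Dom}(d_k^{AB})$ and the choice of the distinguished self-adjoint extension enter --- is the vanishing of the boundary term at $r=0$ in the integration by parts for item~(3). Since the domain of the chosen extension, as well as the generalized eigenfunction $\psi_0^{AB}$, contains functions singular like $r^{-\alpha}$ (resp.\ $r^{\alpha-1}$) near the origin, one must verify that the bilinear boundary contribution $\big[\,\overline{F_k^{AB}}\,f_2-\overline{G_k^{AB}}\,f_1\,\big]_{r=0}$ actually vanishes; this is precisely the symmetry condition defining the extension, and it works because the product of the mild singularity of $\psi_k^{AB}$ with the controlled singularity of elements of $\mathrm{Dom}(d_k^{AB})$ has vanishing trace at $0$. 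All the remaining manipulations (Fubini, the formal use of the Hankel completeness relation) are justified first on a dense class of smooth compactly supported data and then extended by continuity, exactly as in the free case.
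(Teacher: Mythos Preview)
Your proposal is correct and follows essentially the same approach as the paper, which simply states that the proof is identical to that of Proposition~\ref{freehankprop} since the generalized eigenfunctions have the same Bessel-function structure with shifted orders all lying above $-1$ (so that the standard Hankel-transform machinery applies). You supply more detail than the paper does---in particular your discussion of the boundary term at $r=0$ in item~(3) and the role of the distinguished extension is a useful elaboration---but the underlying strategy is the same.
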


\begin{proof}
The proof is identical to the one of the free case, i.e. of Proposition \ref{freehankprop}, as indeed the generalized eigenfunctions present the same structure and all the properties of the standard Hankel transform are well known as long as the index of the Bessel function is larger than $-1/2$.

\end{proof}

\subsection{Generalized Strichartz estimates (proof of Theorem \ref{ABtheorem})}
The proof closely follows the one for the free case, and thus we just sketch it. In fact, the only difference one has to take into account is the index of Bessel functions in the generalized eigenfunction in the case $k=0$, i.e. for {\em radial functions} which, as seen in \eqref{psiAB}, is negative. This will imply a restriction on the range of admissible Strichartz pairs.

We consider the Cauchy problem
 \begin{equation}
        \label{eq:rad_AB}
        \begin{cases}
        i \partial_t u + \mathcal{D}_{AB} u = 0,\qquad  u \colon \R_t \times \R^2_x \rightarrow \C^2,\\
        u(0,x) = u_{0}.
        \end{cases}
    \end{equation}
    Relying on decomposition \eqref{spherdec}, on the operator $\mathcal{P}_k^{AB}$ as defined in \eqref{ABhank} and on Proposition \ref{ABankprop}, we can represent the solution as 
    \begin{equation*}
e^{it\mathcal{D}_{AB}}u_0=\sum_{k\in\Z}u_k(t,r)\cdot \Xi_k(\theta)=
\sum_{k\in\Z}e^{itd_k^{AB}}u_{0,k}\cdot \Xi_k(\theta)=\sum_{k\in\Z}(\mathcal{P}_k^{AB})^{-1}[e^{itE} (\mathcal{P}_k^{AB} u_{0,k})(E)](r) \cdot \Xi_k(\theta)
 \end{equation*}
 where $d_k^{AB}$ is given by \eqref{raddiropab} and the initial condition is $u_0(r)=\begin{pmatrix}
       u_0^1(r)\\
       u_0^2(r)
    \end{pmatrix}\in L^2((r^{n-1}dr)^2)$. 
This allows for an explicit representation of the components of the vector $ u_k (t,r) =\begin{pmatrix}
       u^1_k(t,r)\\
       u^2_k(t,r)
    \end{pmatrix}$
   in terms of the ones of the generalized eigenfunctions $\psi_k^{AB}$, analogous to \eqref{freeu1}-\eqref{freeu2}.

Without any modification from the free case, we get to the equivalent of estimate \eqref{estII}, that here reads as follows
\begin{equation*}
       \lVert I^+_{k,AB} \rVert_{L^p_t L^q_{r dr}}^2  \le 
       \sum_{R\in 2^\Z}  \bigg ( \sum_{N \in 2^\Z}   {N^{2-\frac2q}}  \Big \lVert   \int_0 ^\infty H_k^{AB}(y,\rho) e^{it \sqrt{(Ny)^2 +m^2}} f(\sqrt{(Ny)^2 +m^2}) \phi ( y) y dy \Big \rVert_{L^p_t L^q_{\rho d\rho}[NR,2NR]} \bigg ) ^2
       \end{equation*}
 where $f \in L^2 ((m,+\infty), EdE)$, $E= \sqrt{{\bf p}^2 +m^2}$ and $H_k^{AB}$ is either $F_k^{AB}$ or $G_k^{AB}$ as given by \eqref{psiAB}.
 
It is now convenient to deal separately with the cases $k=0$ (the ``radial part'') and $k\neq0$; to do this, we resort on the projectors $P_{rad}$ and $P_\bot$ as defined in \eqref{def-pro2}.
Recalling \eqref{spherdec}, for any initial datum $u_0\in D(\mathcal{D}_{AB})$ we can thus decompose the flow as follows:
 \begin{equation*}
e^{it\mathcal{D}_{AB}} u_0=e^{it\mathcal{D}_{AB}}P_{rad} u_0+e^{it\mathcal{D}_{AB}}P_{\bot} u_0
  \end{equation*}
  with 
  \begin{equation*}
  P_{rad} u_0=
 u_{0,0}(r)\cdot \Xi(\theta)
\end{equation*}
and 
  \begin{equation*}
  P_\bot u_0=  \sum_{k\in \mathbb{Z}\setminus\{0\}}
  u_{0,k}(r)\cdot \Xi_k(\theta).
\end{equation*}
\\\\
$\bullet$ {\bf Strichartz estimates for $e^{it\mathcal{D}_{AB}}P_{\bot} u_0$.} Here the proof follows exactly the lines of the one in the free case. The key Lemma \ref{localizedlemmafree} holds, with the choice of either $\nu=|k-\alpha|$ or $\nu=|k-\alpha|+{\rm sgn}(k)$, which are always strictly positive for $k\neq0$. Thus the whole proof can be retraced and the full set of Strichartz estimates are obtained. We omit the details.
\\\\
$\bullet$ {\bf Strichartz estimates for $e^{it\mathcal{D}_{AB}}P_{0} u_0$.} In this case the singularity of the domain (i.e. of the generalized eigenfunction) comes into play, as indeed the generalized eigenfunction takes the form $\psi_{0}^{AB}(E, r)=
    \frac{1}{\sqrt{2|E|}}
    \begin{pmatrix}
     \sqrt{|E|+m} J_{-\alpha}({\bf p}r) \\
     - i {\rm sgn}(E)\sqrt{|E|-m}J_{1-\alpha}({\bf p}r)
    \end{pmatrix}$.
    
    Its effect is easily understood: we need to modify the result of Lemma \ref{localizedlemmafree} as follows:
\begin{lemma}\label{localizedlemmaAB}
 Let $h \in C_c^\infty$ with $\supp h \subset [\frac12, 1] = \colon I$.
  Then, for any $\alpha \in (0, \frac12]$, any $(p,q)\in(2,+\infty)\times(2+\infty)$ and $N\in 2^{\Z}$ the following estimate holds 
    \begin{equation}\label{locABest}
     \Big \lVert   \int_0 ^\infty  J_{-\alpha}(\rho y) e^{it \sqrt{(Ny)^2 +m^2}} h(y) dy \Big \rVert_{L^p_t L^q_{\rho d\rho}[R,2R]}\leq C_{\alpha,N}^{\frac 2p}  \|h\|_{L^{2}} \begin{cases}
          R^{\frac 2q - \alpha } & \quad \text{ if } R<1,\\
          R^{\big ( 1- \frac 2q \big )\big (\frac 1p - \frac 12  \big) + \frac 1q  } & \quad \text{ if } R\ge1,
      \end{cases}
    \end{equation}
   where the constant $C_{\alpha,N}$ is given by \eqref{CN2}.
\end{lemma}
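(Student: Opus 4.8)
The plan is to reproduce, almost verbatim, the three-step scheme of the proof of Lemma~\ref{localizedlemmafree} in dimension $n=2$, with the Bessel order taken to be $\nu=-\alpha\in[-\tfrac12,0)$ rather than $\nu\ge0$; the only genuinely new ingredient is the singularity $J_{-\alpha}(s)\sim s^{-\alpha}$ as $s\to0^{+}$, which degrades the dependence of the constants on $R$ in the regime $R<1$. To begin with, Step~1 of the proof of Lemma~\ref{localizedlemmafree}, namely the Hausdorff--Young bound $\lVert\int_0^\infty e^{it\sqrt{(Ny)^2+m^2}}h(y)\,dy\rVert_{L^p_t}\le C_N^{2/p}\lVert h\rVert_{L^{p'}}$ with $C_N$ as in \eqref{CN2}, only involves the phase and not the radial profile, and can therefore be invoked unchanged; the $\alpha$-dependence of the final constant will enter solely through the Bessel estimates of Lemma~\ref{lembessel}, which hold for every order $\ge-\tfrac12$ and hence apply to $J_{-\alpha}$ precisely thanks to the hypothesis $\alpha\le\tfrac12$.

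For $R<1$, following Step~2 of Lemma~\ref{localizedlemmafree}, I would use the one-dimensional embedding $\dot H^{\frac12-\frac1q}(\Omega)\hookrightarrow L^q(\Omega)$ on the interval $\Omega=[R,2R]$, together with $\dot H^{\frac12-\frac1q}=[L^2,\dot H^1]_{\frac12-\frac1q}$, so that it suffices to estimate the $L^p_tL^2_{d\rho}[R,2R]$ and $L^p_t\dot H^1_{d\rho}[R,2R]$ norms of the integral. By Minkowski's inequality and Step~1, these are bounded, respectively, by $C_N^{2/p}\lVert J_{-\alpha}(\rho y)h(y)\rVert_{L^2_{d\rho}[R,2R]L^{p'}_y}$ and $C_N^{2/p}\lVert\partial_\rho\!\bigl(J_{-\alpha}(\rho y)\bigr)h(y)\rVert_{L^2_{d\rho}[R,2R]L^{p'}_y}$. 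Performing the change of variables $s=\rho y$ and inserting the near-origin bounds $|J_{-\alpha}(s)|\lesssim_\alpha s^{-\alpha}$ and $|J_{-\alpha}'(s)|\lesssim_\alpha s^{-\alpha-1}$ from Lemma~\ref{lembessel}, one finds that, uniformly for $y\in\supp h$, the two inner $\rho$-norms are $\lesssim_\alpha R^{\frac12-\alpha}$ and $\lesssim_\alpha R^{-\frac12-\alpha}$, respectively. Interpolating these with weights $\tfrac12+\tfrac1q$ and $\tfrac12-\tfrac1q$ produces the exponent $\tfrac1q-\alpha$ for the $L^q_{d\rho}[R,2R]$ norm; multiplying by the factor $R^{1/q}$ that converts $d\rho$ into the measure $\rho\,d\rho$ yields the claimed bound $R^{\frac2q-\alpha}$.

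For $R\ge1$, following Step~3 of Lemma~\ref{localizedlemmafree}, the argument $\rho y$ of the Bessel function stays bounded below while its turning point sits at the fixed, bounded location $\rho\sim\alpha\le\tfrac12$; consequently only the large-argument asymptotics $|J_{-\alpha}(s)|,\,|J_{-\alpha}'(s)|\lesssim_\alpha s^{-1/2}$ come into play and, since the order is a single fixed number, no uniform-in-$\nu$ turning-point correction is needed, so one may simply take $\beta(p)=\tfrac1p-\tfrac12$ for all $p\in(2,\infty)$. Concretely, the embedding $W^{1,p}(\Omega)\hookrightarrow L^\infty(\Omega)$ and Step~1 give the $L^p_tL^\infty_{d\rho}[R,2R]$ bound $C_N^{2/p}R^{\frac1p-\frac12}\lVert h\rVert_{L^2}$, a direct computation gives the $L^p_tL^2_{\rho d\rho}[R,2R]$ bound $C_N^{2/p}R^{1/2}\lVert h\rVert_{L^2}$, and the elementary inequality $\lVert f\rVert_{L^q_{\rho d\rho}}\le\lVert f\rVert_{L^2_{\rho d\rho}}^{2/q}\lVert f\rVert_{L^\infty}^{1-2/q}$ interpolates the two into $R^{(1-\frac2q)(\frac1p-\frac12)+\frac1q}$, which is exactly the stated estimate.

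The proof presents no real conceptual difficulty; the point that requires attention is to verify that the Bessel estimates of Lemma~\ref{lembessel} — both near the origin and for large argument — and the bookkeeping of their constants remain valid for the negative order $-\alpha$, which is guaranteed by the assumption $\alpha\le\tfrac12$ (equivalently $-\alpha\ge-\tfrac12$). The endpoint $\alpha=\tfrac12$ is borderline: there $J_{-1/2}(s)\propto s^{-1/2}\cos s$, the small-$R$ estimate degenerates to $R^{\frac2q-\frac12}$, and this degeneration is what ultimately produces the restriction on the range of admissible Strichartz pairs in Theorem~\ref{ABtheorem}.
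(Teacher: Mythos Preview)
Your proof is correct and follows exactly the approach the paper takes: the paper's own proof simply says to repeat the three-step argument of Lemma~\ref{localizedlemmafree} with the fixed-order Bessel estimates \eqref{est_norm_bes_nu} in place of the uniform-in-$\nu$ estimates \eqref{est_norm_bes}, which is precisely what you carry out in detail. Your observation that for a single fixed order one may take $\beta(p)=\tfrac1p-\tfrac12$ for all $p$ (no turning-point correction needed) is exactly why the $R\ge1$ exponent in the statement is the simpler $(1-\tfrac2q)(\tfrac1p-\tfrac12)+\tfrac1q$ rather than the piecewise $\beta(p)$ of Lemma~\ref{localizedlemmafree}.
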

\begin{proof}
The proof is the same as the one of Lemma \ref{localizedlemmafree}: making use of estimates \eqref{est_norm_bes_nu} instead of \eqref{est_norm_bes} yields the result.
\end{proof}

As a consequence, condition \eqref{freerange} provides for $R<1$ the following additional limitation on the range of $q$:
    \begin{equation}\label{restrictionAB}
    \frac2q-\alpha >0 \Leftrightarrow q<\frac2\alpha.
    \end{equation}

    The rest of the proof works exactly as in the free case.
    \begin{remark}
    Condition \eqref{restrictionAB} provides an upper bound for the range of admissible Strichartz pairs in the Aharonov-Bohm case. Notice that if $\alpha\rightarrow 0$ (i.e. if the effect of the magnetic potential becomes negligible) then $q\rightarrow +\infty$ and we recover the full set of admissible pair. It can be proved that condition \eqref{restrictionAB} is indeed sharp (by retracing proof of Proposition 6.6 in \cite{cacdanzhayin1}), but we prefer not to insist on this here. Finally, notice that for $\alpha\in(1/2,1)$ the analogue of \eqref{restrictionAB} becomes $q<\frac2{1-\alpha}$, and this yields condition \eqref{q-alp} in Theorem \ref{ABtheorem}.
    \end{remark}

 \section{Dirac-Coulomb}

 We finally turn to the Dirac-Coulomb system. The punchline of the proof will be the same again, but now with some notable differences. The generalized eigenfunctions are indeed more complicated to estimate (the pointwise bounds will be derived in the appendix) and, more importantly, in order to provide dispersive estimates we shall need to localize on one side of the spectrum.

    \subsection{Preliminaries}\label{subsecDMprel}
The spectral theory for the Dirac-Coulomb operator has been widely investigated and it is now very well understood. The operator $\mathcal{D}_{DC}$ initially defined on $C^\infty_c(\R^3\backslash\{0\})$ is indeed essentially self-adjoint if and only if $|\nu|\leq \frac{\sqrt3}2$, and the domain of its self-adjoint extension is $H^1$; in the range $\frac{\sqrt3}2<|\nu|<1$ it admits a distinguished self-adjoint extension, that is characterised by the properties $D(\mathcal{D}_{DC})\subset D(|\mathcal{D}|^{1/2})$. In what follows we shall denote for brevity $\mathcal{D}_{DC}^{dist}=\mathcal{D}_{DC}$.
The spectrum of the operator has an essential part which is given by $\sigma_{ess}=(-\infty,-m]\cup[m,+\infty)$ plus a discrete part in the gap $(-m,m)$ consisting of eigenvalues given by Sommerfeld's celebrated fine-structure formula:
\begin{equation}\label{discspec}
E=E_n=\displaystyle{\rm sgn(\nu)} \frac{m}{\sqrt{1+\frac{\nu^2}{(k^2-\nu^2)+n_r^2}}},\qquad n_r=\begin{cases}
0,1,2,\dots {\rm if}\; k<0,\\
1,2,\dots {\rm if}\; k>0.
\end{cases}
\end{equation}
Notice thus that the discrete spectrum accumulates at $E=\pm m$ (depending on the sign of $\nu$) as $n_r\rightarrow+\infty $.
The literature on the topic is huge: we refer to \cite{nen}, the recent papers \cite{galmic}-\cite{estlewser2}-\cite{schisoltok}-\cite{dolestser} and references therein for details.

We now need to explicitly write the generalized eigenfunctions of the operator $\mathcal{D}_{DC}$.
To do so, resorting on the partial wave decomposition \eqref{spherdec} again (in this case the 3d version given by \eqref{decomp3}-\eqref{spherdec})
yields the following decomposition for the Dirac-Coulomb operator:
\begin{equation}\label{raddiropdc}
 \mathcal{D}_{DC}=
  \bigoplus_{k\in \mathbb{Z}^*}d_{k}^{DC}\otimes I_{4},
  \qquad
  d_k^{DC}=\left(\begin{array}{cc}m -\frac\nu r& -(\frac{d}{dr}+\frac{1}{r})+\frac kr \\(\frac{d}{dr}+\frac{}{r})+\frac kr & -m-\frac\nu r\end{array}\right), \qquad
  I_4:=
  \begin{pmatrix}
    I_2 & 0 \\
  0 & I_2
  \end{pmatrix}.
\end{equation}
Then, we are interested in solving the following equation
\begin{equation}\label{eq:genegDC}
d_{k}^{DC} \Psi_k(E, r)=d_{k}\left(\begin{matrix} \psi_k^1(E, r)\\ \psi_k^2(E, r)\end{matrix}\right)=E  \left(\begin{matrix} \psi_k^1(E, r)\\ \psi_k^2(E, r)\end{matrix}\right).
\end{equation}
The computations for the generalized eigenfunctions are quite involved but well understood, and one has the following solutions for $E>m$
(see e.g. \cite{lanlifrel} pag 112 \footnote{Notice that here we are interested in the repulsive case; we thus need to adapt the expressions obtained in \cite{lanlifrel} to the choice $\nu<0$.}): 
  \begin{equation}\label{psiDC}
    \Psi_{k}^{DC}(E, r)= \begin{pmatrix}  F_k^{DC}(E,r) \\
 G_k^{DC}(E,r)
\end{pmatrix}
  \end{equation}
  where
  \begin{equation}
\label{formula_F_k+}
F_k^{DC}(E;r) \coloneqq 2 N^+(E) e^{\frac{\pi \alpha_E}2} \frac{\lvert \Gamma (\gamma +1 +i\alpha_E) \rvert}{\Gamma (2\gamma +1)} \frac{(2{\bf p}r)^{\gamma- \frac 12}}{\sqrt r} Im \{ e^{ipr + i \xi(E)} {_1F_1} (\gamma -i \alpha_E, 2\gamma +1, 2i{\bf p}r) \}
\end{equation}
\begin{equation}
\label{formula_G_k+}
G_k^{DC}(E;r) \coloneqq 2 N^-(E)  e^{\frac{\pi \alpha_E}2} \frac{\lvert \Gamma (\gamma +1 +i\alpha_E) \rvert}{\Gamma (2\gamma +1)} \frac{(2{\bf p}r)^{\gamma- \frac 12}}{\sqrt r} Re \{ e^{ipr + i \xi(E)} {_1F_1} (\gamma -i \alpha_E, 2\gamma +1, 2i{\bf p}r) \}
\end{equation}
with
\[
\alpha_E \coloneqq \frac{\nu E}{\bf p}, \quad \gamma=\sqrt{k^2-\nu^2},\quad e^{2i\xi(E)}=\frac{k-\frac{i\alpha_E}E m}{\gamma-i\alpha_E}=\frac{k-\frac{i\nu m}{\bf p}}{\gamma-i\alpha_E}m,\quad {\bf p}=\sqrt{E^2-m^2}
\]
and $N^\pm(E)$ given by \eqref{Enorm}.

We are again in position to define the relativistic Hankel transform for the Dirac-Coulomb model, that is the following
\begin{definition}
Let $f(r)=\begin{pmatrix} f_1(r) \\
f_2(r)
\end{pmatrix}\in L^2(\R^3,\C^4)$. For $k\in\Z^*$ we define the following sequence of operators
\begin{equation}\label{DChank}
\mathcal P_k^{DC,+} f(E) \coloneqq \int_0^{+\infty} \psi^{DC}_k(E,r)^T f(r)r^2 dr
\end{equation}
where $\psi^{DC}_k$ is given by \eqref{psiDC}. 
\end{definition}

The following Proposition is proved in \cite{git} using the Krein method of guiding functionals (see {\it e.g. \cite{AG63}} for an introduction to this method).

\begin{proposition}\label{dchankprop}\cite{git}
Let $f\in Dom(d_k^{DC})$. The operator 
$$\mathcal P_k^{DC,+} \colon [L^2((0,+\infty), r^{2} dr)]^2 \rightarrow L^2 ((m,+\infty), E dE)$$ defined by \eqref{DChank} satisfies for any $k\in\Z$ the following properties:
\begin{enumerate}
\item $\mathcal P_k^{DC,+}$ is invertible in the sense that the inverse operator $\mathcal (P_k^{DC,+})^{-1} \colon L^2((m,+\infty), E dE) \rightarrow [L^2((0,+\infty), r^2 dr )]^2$ defined by
\begin{eqnarray}\label{dcinv}
\mathcal (P_k^{DC,+})^{-1} \phi (r') 
&=&\int_{(m,+\infty)} \begin{pmatrix}
F_k^{DC}(E,r) \phi(E)&\\
-i G_k^{DC}(E,r)\phi(E)
\end{pmatrix} 
EdE
\end{eqnarray}
 where $F_k^{DC}$ and $G_k^{DC}$ are given by \eqref{formula_F_k+}-\eqref{formula_G_k+} is such that $$(P_k^{DC,+})^{-1}(P_k^{DC,+})\Pi_+f(r)=\Pi_+f(r)$$ for any $f\in Dom(d_k^{DC})$.
\item $\mathcal{P}^{DC,+}_k$ is an isometry.
\item $(\mathcal P_k^{DC,+} d_k^{DC} f) (E)= E \mathcal P_k^{DC,+}f(E)$.
\end{enumerate}
\end{proposition}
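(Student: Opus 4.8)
The plan is to follow the blueprint of the proof of Proposition~\ref{freehankprop}: properties (2) and (3) are light once the expansion/inversion identity (1) is available. Property (3) follows by integrating by parts in \eqref{DChank} and invoking \eqref{eq:genegDC} together with the self-adjointness of the distinguished extension $d_k^{DC}$ --- the boundary term at $r=0$ vanishes because $\psi_k^{DC}(E,\cdot)$ and $f\in\mathrm{Dom}(d_k^{DC})$ obey the same boundary condition there, and the term at $r=+\infty$ vanishes by the oscillatory $O(r^{-1})$ decay of $\psi_k^{DC}$. For (2) one checks, exactly as in Proposition~\ref{freehankprop}, that the operator \eqref{dcinv} is the adjoint of $\mathcal{P}_k^{DC,+}$, so that $\|\mathcal{P}_k^{DC,+}f\|_{L^2((m,+\infty),EdE)}^2=\langle(\mathcal{P}_k^{DC,+})^{-1}\mathcal{P}_k^{DC,+}f,f\rangle=\|\Pi_+f\|^2$ by (1); thus $\mathcal{P}_k^{DC,+}$ is an isometry from $\mathrm{Ran}\,\Pi_+$ onto $L^2((m,+\infty),E\,dE)$. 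Here it is worth recording that, since $\nu<0$, the eigenvalues \eqref{discspec} all lie in $(-m,0)$, so $\Pi_+=\chi_{[0,+\infty)}(d_k^{DC})=\chi_{[m,+\infty)}(d_k^{DC})$ is precisely the spectral projector onto the purely absolutely continuous branch $[m,+\infty)$, and $\mathcal{P}_k^{DC,+}$ (defined a priori on compactly supported $f$ and extended by density) only sees that branch.

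The heart of the matter is (1), for which I would follow \cite{git} and apply Krein's method of guiding functionals (see \cite{AG63}). First, one identifies $\psi_k^{DC}(E,\cdot)$, given by \eqref{formula_F_k+}--\eqref{formula_G_k+}, as --- up to the chosen normalization --- the unique solution of \eqref{eq:genegDC} that is \emph{regular at the origin} in the sense of the distinguished self-adjoint extension: of the two Frobenius solutions at $r=0$, with leading powers $r^{\gamma-1}$ and $r^{-\gamma-1}$ where $\gamma=\sqrt{k^2-\nu^2}$, only the former lies in $H^{1/2}$, and it is \eqref{formula_F_k+}--\eqref{formula_G_k+}. The hypothesis $|\nu|<1$ is exactly what makes $\gamma$ real and positive, while for $\tfrac{\sqrt3}{2}<|\nu|<1$ both solutions are square-integrable near $0$ and this selection becomes a genuine boundary condition. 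Next, one introduces the guiding functional $\Phi(f;E):=\int_0^{+\infty}\psi_k^{DC}(E,r)^T f(r)\,r^2\,dr$ on a suitable dense subspace of $\mathrm{Dom}(d_k^{DC})$ and verifies that it is \emph{simple} on $(m,+\infty)$: that $\Phi\big((d_k^{DC}-E)f;E\big)=0$ (immediate from \eqref{eq:genegDC}) and that $E\mapsto\Phi(\,\cdot\,;E)$ never vanishes identically for $E\in(m,+\infty)$. Krein's theorem then yields that the spectrum of $d_k^{DC}$ on $(m,+\infty)$ is simple and provides a nondecreasing spectral function $\sigma$ such that $f\mapsto\Phi(f;\cdot)$ extends to a unitary from $\mathrm{Ran}\,\Pi_+$ onto $L^2((m,+\infty),d\sigma)$, with inverse $\phi\mapsto\int_{(m,+\infty)}\psi_k^{DC}(E,\cdot)\,\phi(E)\,d\sigma(E)$; this is \eqref{dcinv} as soon as one shows $d\sigma(E)=E\,dE$.

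Identifying the spectral weight $d\sigma(E)=E\,dE$ is the main obstacle. One route is through Stone's formula: build the kernel of $(d_k^{DC}-z)^{-1}$ from the regular solution $\psi_k^{DC}$ and the outgoing Jost solution at $r=+\infty$, and read off the spectral density from the jump of this kernel across the cut $[m,+\infty)$; that jump is controlled by the Wronskian of the two solutions and by the large-$r$ asymptotics of ${_1F_1}(\gamma-i\alpha_E,2\gamma+1,2i\mathbf{p}r)$, which is the standard Coulomb-distorted plane wave, and the normalizing constants $N^\pm(E)$, $2e^{\pi\alpha_E/2}|\Gamma(\gamma+1+i\alpha_E)|/\Gamma(2\gamma+1)$ and the phase $e^{i\xi(E)}$ in \eqref{formula_F_k+}--\eqref{formula_G_k+} have been engineered precisely so that it equals $E$. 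Alternatively, one can argue directly as in Proposition~\ref{freehankprop}, invoking the closure relation for the relativistic Coulomb wave functions --- the analogue of $\int_0^{+\infty}J_\mu(\mathbf{p}r)J_\mu(\mathbf{p}r')\,\mathbf{p}\,d\mathbf{p}=\delta(r-r')/r$ --- and the identity $N^+(E)^2+N^-(E)^2=1$ to collapse the spinor structure, so that scalar Coulomb orthogonality gives $(\mathcal{P}_k^{DC,+})^{-1}\mathcal{P}_k^{DC,+}\Pi_+f=\Pi_+f$. In either case the delicate ingredients are the same: the endpoint asymptotics of the confluent hypergeometric functions, the bookkeeping of all normalization factors, and --- the genuinely new difficulty relative to the Aharonov--Bohm case --- the justification of the regular-solution selection and of the simplicity of the guiding functional in the range $\tfrac{\sqrt3}{2}<|\nu|<1$, where $\mathrm{Dom}(d_k^{DC})$ only embeds in $H^{1/2}$ rather than $H^1$.
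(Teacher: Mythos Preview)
Your proposal is correct and follows precisely the approach the paper adopts: the paper does not give its own proof of this proposition but simply cites \cite{git}, noting that the result is established there via Krein's method of guiding functionals --- which is exactly the machinery you sketch for part (1), and your treatment of (2) and (3) mirrors the argument given for the free case in Proposition~\ref{freehankprop}. If anything, your outline is more detailed than what the paper provides; the only caveat is that the identification of the spectral weight $d\sigma(E)=E\,dE$ and the regular-solution selection in the range $\tfrac{\sqrt3}{2}<|\nu|<1$ are indeed the delicate points, and you would ultimately be reproducing the computations of \cite{git} rather than offering a self-contained argument.
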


\begin{remark}
Notice that, compared with the analogous results for the free and the AB cases, {\it i.e.} Propositions \ref{freehankprop}-\ref{ABankprop}, the result above is slightly less ambitious, as it does not provide a complete ``reconstruction formula'' for the Dirac-Coulomb operator. Such a formula is provided in \cite{git}, but we only consider the case of functions which have energies supported in the upper-half of the spectrum, as this is what we shall ultimately need.
\end{remark}

\subsection{Generalized Strichartz estimates (proof of Theorem \ref{DCtheorem})}
We consider the Cauchy problem
 \begin{equation}\label{eqDC}
        \begin{cases}
        i \partial_t u + \mathcal{D}_{DC} u = 0,\qquad  u \colon \R_t \times \R^3_x \rightarrow \C^4,\\
        u(0,x) = u_{0}.
        \end{cases}
    \end{equation}
    with $\mathcal{D}_{DC}$ defined by \eqref{DCoperator} with $\nu<0$, and where we assume that $u_0$ is in the range of $\Pi_+:=\chi_{[0,+\infty)}(\mathcal{D}_{DC})$.

    Relying on decomposition \eqref{spherdec}, on the operator $\mathcal{P}_k^{DC}$ as defined in \eqref{DChank} and on Proposition \ref{dchankprop}, we can represent the solution as 
    \begin{equation*}
e^{it|\mathcal{D}_{DC}|}u_0=\sum_{k\in\Z^*}u_k(t,r)\cdot \Xi_k(\theta)=
\sum_{k\in\Z^*}e^{it|d_k^{DC}|}u_{0,k}\cdot \Xi_k(\theta)=\sum_{k\in\Z^*}(\mathcal{P}_k^{DC,+})^{-1}[e^{itE} (\mathcal{P}_k^{DC,+} u_{0,k})(E)](r) \cdot \Xi_k(\theta)
 \end{equation*}
 where $d_k^{DC}$ is given by \eqref{raddiropdc} and $|d_k^{DC}|=d_k^{DC}\Pi_+$, and the initial condition is $u_0(r)=\begin{pmatrix}
       u_0^1(r)\\
       u_0^2(r)
    \end{pmatrix}\in L^2((r^2dr)^2)$. 
This allows for an explicit representation of the components of the vector $ u_k (t,r) =\begin{pmatrix}
       u^1_k(t,r)\\
       u^2_k(t,r)
    \end{pmatrix}$
   in terms of the ones of the generalized eigenfunctions $\psi_k^{DC}$, analogous to \eqref{freeu1}-\eqref{freeu2}. 
   
   Then, the proof follows the very same lines as the previous cases; without any modification we indeed get to the equivalent of estimate \eqref{estII} that here now is:

\begin{equation}\label{checkpointdc}
       \lVert I^+_{k,DC} \rVert_{L^p_t L^q_{r dr}}^2  \le 
       \sum_{R\in 2^\Z}  \bigg ( \sum_{N \in 2^\Z}   {N^{\frac52-\frac3q}}  \Big \lVert   \int_0 ^\infty H_k^{DC}(N,y,\rho)  e^{it \sqrt{(Ny)^2 +m^2}} f(\sqrt{(Ny)^2 +m^2}) \phi ( y) y dy \Big \rVert_{L^p_t L^q_{\rho^2d\rho}[NR,2NR]} \bigg ) ^2
       \end{equation}
 where $f \in L^2 ((m,+\infty), EdE)$, $E= \sqrt{{\bf p}^2 +m^2}$ and $H_k^{DC}(N,y,\rho)$ is either $F_k^{DC}$ or $G_k^{DC}$ \eqref{formula_F_k+}-\eqref{formula_G_k+} rescaled as follows:
 \begin{equation}\label{F_krescaled}
F_k^{DC}(N, y, \rho)= N^+(E)e^{\frac{\pi \alpha_{Ny}}2} \frac{\lvert \Gamma (\gamma +1 +i\alpha_{Ny}) \rvert}{\Gamma (2\gamma +1)} \frac{(2 y \rho)^{\gamma-1/2}}{\sqrt \rho} Im\{e^{ipr + i  \xi(N)} {_1F_1} (\gamma -i \alpha_{Ny}, 2\gamma +1, 2iy\rho) \},
\end{equation}
 \begin{equation}\label{G_krescaled}
G_k^{DC}(N, y, \rho)= N^-(E)e^{\frac{\pi \alpha_{Ny}}2} \frac{\lvert \Gamma (\gamma +1 +i\alpha_{Ny}) \rvert}{\Gamma (2\gamma +1)} \frac{(2 y \rho)^{\gamma-1/2}}{\sqrt \rho} Re\{e^{ipr + i  \xi(N)} {_1F_1} (\gamma -i \alpha_{Ny}, 2\gamma +1, 2iy\rho) \}
\end{equation}
    where 
\[
\alpha_{Ny} \coloneqq \frac{\nu \sqrt{(Ny)^2 + m^2}}{Ny}, \quad e^{2i  \xi(N)}=\frac{- k+\frac{i\nu m}N}{\gamma-i\alpha_{Ny}}
\]
and with the normalization terms $N^\pm$ given by \eqref{Enorm} (which, as in the previous sections, we shall neglect in our forthcoming estimates).

Then, the key step is represented   by the following result:

\begin{lemma}\label{localizedlemmaDC}
Let $h \in C_c^\infty$ with $\supp h \subset [\frac12, 1] = \colon I$, and let $k\in\Z^*$. Then, for any $(p,q)\in(2,+\infty)\times(2+\infty)$ and $N\in 2^{\Z}$ the following estimate holds 
    \begin{equation}\label{locestdc}
      \Big \lVert   \int_0 ^\infty H_k^{DC}(N,y,\rho) e^{it \sqrt{(Ny)^2 +m^2}} h(y) dy \Big \rVert_{L^p_t L^q_{\rho^{2} d\rho}[R,2R]}\leq C_N^{\frac 2p}  \|h\|_{L^{2}} \begin{cases}
          {R^{\gamma-1+\frac 3q}} & \quad \text{ if } R<1,\\
          R^{\frac 1q + \big (1- \frac 2q \big ) \beta(p)} & \quad \text{ if } R\ge1,
      \end{cases}
    \end{equation}
    where $H_k^{DC}$ is either $F_k^{DC}$ or $G_k^{DC}$ as given by \eqref{F_krescaled}-\eqref{G_krescaled}, with
    \begin{equation}\label{beta}
    \beta(p)=\begin{cases}
\frac1p-1\qquad {\rm if}\: p\in[2,4),\\
\frac1p-\frac56\qquad {\rm if}\: p\in[4,+\infty)
\end{cases}
\end{equation}
   and where the constant
    \begin{equation}\label{CN2bis}
    C_N\sim\begin{cases}
    N^{-\frac 12} \quad \text{if }\: N=2^n,\\
    N^{-1}  \quad \text{if} \: N=2^{-n}
    \end{cases}
    \end{equation}
    is independent on $\nu$.

\end{lemma}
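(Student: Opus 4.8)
The plan is to transcribe, essentially line by line, the three-step proof of Lemma \ref{localizedlemmafree}, replacing only the Bessel profile $\rho^{-\frac{n-2}2}J_\nu(\rho y)$ by the Coulomb profile $H_k^{DC}(N,y,\rho)$ and the Bessel estimates of Lemma \ref{lembessel} by the corresponding pointwise bounds for $H_k^{DC}$. The time-frequency input is untouched: the preliminary Hausdorff--Young estimate \eqref{HY}, $\lVert F[h]\rVert_{L^p}\le C_N^{2/p}\lVert h\rVert_{L^{p'}}$ with $F[h](t)=\int_0^\infty e^{it\sqrt{(Ny)^2+m^2}}h(y)\,dy$, depends only on the phase $\sqrt{(Ny)^2+m^2}$ and on $\supp h\subset I$, so it holds verbatim, together with the identification \eqref{CN2bis} of $C_N$. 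Inserting it through Minkowski's inequality reduces \eqref{locestdc}, exactly as in the free case, to controlling $H_k^{DC}(N,y,\cdot)$ and $\partial_\rho H_k^{DC}(N,y,\cdot)$ in the norms $L^2_{d\rho}[yR,2yR]$, $L^p_{d\rho}[yR,2yR]$ and $L^2_{\rho^2 d\rho}[yR,2yR]$, uniformly for $y\in I$.

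The genuinely new point, which carries all of the difficulty, is the substitute for Lemma \ref{lembessel}. Writing $z=y\rho$, formulas \eqref{F_krescaled}--\eqref{G_krescaled} read $H_k^{DC}(N,y,\rho)=A_{N,y}\,(2y)^{\gamma-\frac12}\,\rho^{\gamma-1}\,\Phi(z)$, where $\Phi$ is the real or imaginary part of $e^{iz+i\xi(N)}{_1F_1}(\gamma-i\alpha_{Ny},2\gamma+1,2iz)$ and $A_{N,y}=e^{\pi\alpha_{Ny}/2}|\Gamma(\gamma+1+i\alpha_{Ny})|/\Gamma(2\gamma+1)$. What I need, and will establish in the Appendix uniformly in $k\in\Z^*$, in $\nu$ in the admissible range, and in $y\in I$, is: (i) for $z\lesssim 1$, $|\Phi(z)|\lesssim 1$ and $|\Phi'(z)|\lesssim 1$, so that $|H_k^{DC}|\lesssim\rho^{\gamma-1}$ and $|\partial_\rho H_k^{DC}|\lesssim\rho^{\gamma-2}$ — this is the only place the singularity exponent $\gamma-1$ enters, and it is harmless on the fixed-ratio interval $[yR,2yR]$ for every $\gamma>0$; (ii) for $z\gtrsim 1$, the Kummer asymptotics of ${_1F_1}$ yield $|H_k^{DC}|\lesssim\rho^{-1}$ and $|\partial_\rho H_k^{DC}|\lesssim\rho^{-1}$ (the oscillation dominates upon differentiation, the logarithmic phase correction $e^{\pm i\alpha_{Ny}\ln(2z)}$ being slowly varying), together with the improved Lebesgue bound $\lVert H_k^{DC}(N,y,\cdot)\rVert_{L^p_{d\rho}[R,2R]}+\lVert\partial_\rho H_k^{DC}(N,y,\cdot)\rVert_{L^p_{d\rho}[R,2R]}\lesssim R^{\beta(p)}$ with $\beta(p)$ as in \eqref{beta}, the gain of $\tfrac16$ for $p\ge4$ coming from the oscillatory (stationary-phase) structure of the Coulomb wave exactly as it does for $J_\nu$. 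One must also track there the prefactor $A_{N,y}$, which blows up as $N\to 0$ (then $\alpha_{Ny}\sim\nu m/N$), and verify that it is compensated within the bookkeeping so that the constant $C_N$ of \eqref{CN2bis} — independent of $\nu$ — survives.

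Granting these bounds, the remaining two steps are copies of Steps 2 and 3 of Lemma \ref{localizedlemmafree}. For $R<1$: by $\dot H^{\frac12-\frac1q}(\Omega)\hookrightarrow L^q(\Omega)$ and interpolation between the $L^2_{d\rho}$ and $\dot H^1_{d\rho}$ bounds, Minkowski and \eqref{HY} reduce matters to $\lVert\rho^{\gamma-1}\rVert_{L^2_{d\rho}[yR,2yR]}\sim R^{\gamma-\frac12}$ and $\lVert\rho^{\gamma-2}\rVert_{L^2_{d\rho}[yR,2yR]}\sim R^{\gamma-\frac32}$, which interpolate to $R^{\gamma-1+\frac1q}$ in $L^q_{d\rho}$; passing from $L^q_{d\rho}[R,2R]$ to $L^q_{\rho^2 d\rho}[R,2R]$ costs a factor $R^{2/q}$ and gives $R^{\gamma-1+\frac3q}$. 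For $R\ge1$: from $W^{1,p}(\Omega)\hookrightarrow L^\infty(\Omega)$ together with the $L^p_{d\rho}$ bounds in (ii) one gets an $L^\infty_{d\rho}$ bound $\lesssim R^{\beta(p)}$, while $|H_k^{DC}|\lesssim\rho^{-1}$ gives $\lVert H_k^{DC}\rVert_{L^2_{\rho^2 d\rho}[R,2R]}\lesssim R^{1/2}$; the interpolation $\lVert f\rVert_{L^q_{\rho^2 d\rho}}\le\lVert f\rVert_{L^\infty}^{1-\frac2q}\lVert f\rVert_{L^2_{\rho^2 d\rho}}^{\frac2q}$ then produces $R^{\frac1q+(1-\frac2q)\beta(p)}$ (note this $R\ge1$ regime coincides exactly with the free $n=3$ case). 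Throughout, the factor $C_N^{2/p}$ is carried along from \eqref{HY}. In summary, the only step that is not a mechanical transcription of the free case is the Appendix analysis of the confluent hypergeometric profile $H_k^{DC}$ — in particular the uniform control of its $\Gamma$- and exponential-prefactors and the oscillatory $L^p$ improvement for $p\ge4$ — which is precisely where the whole argument concentrates its effort.
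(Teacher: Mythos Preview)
Your proposal is correct and takes essentially the same approach as the paper: the paper's proof of Lemma \ref{localizedlemmaDC} simply states that one follows the three-step argument of Lemma \ref{localizedlemmafree} verbatim, replacing the Bessel estimates of Lemma \ref{lembessel} by the corresponding $L^q$ bounds on $\Phi_k^{DC}$ provided in Corollary \ref{finalcorDC} (which in turn rest on the pointwise estimates of Theorem \ref{thm_formuladc}). You have identified exactly this structure, including the fact that the Hausdorff--Young step \eqref{HY} is unchanged, that the entire difficulty is the Appendix analysis of the hypergeometric profile, and that the delicate point is the control of the $\Gamma$- and exponential-prefactor when $N=2^{-n}$ (where $\alpha_{Ny}$ is unbounded); this is precisely the content of the ``Case $N=2^{-n}$'' part of the proof of Theorem \ref{thm_formuladc}.
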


\begin{proof}
The proof follows the very same lines of the one of Lemma \eqref{localizedlemmafree}: we simply need to replace the ``standard'' estimates on Bessel functions given by Lemma \ref{est_norm_bes_nu} with the corresponding ones on the functions $F_k$, $G_k$, which are given in Corollary \ref{finalcorDC} and are consequences of Theorem \ref{thm_formuladc}. The remaining details are straightforward.
\end{proof}

Here again, we deal separately with the cases $k=\pm1$ (the ``radial part'') and  $k\neq\pm1$ by making use of the projectors $P_{rad}$ and $P_\bot$ as defined in \eqref{def-pro3}.
Recalling \eqref{spherdec}, for any initial datum $u_0=\sum_{k\in\Z^*} u_{0,k}(r)\cdot \Xi_k(\theta)\in D(\mathcal{D}_{DC})$ we can thus decompose the flow as follows:
 \begin{equation*}
e^{it\mathcal{D}_{DC}} u_0=e^{it\mathcal{D}_{DC}}P_{rad} u_0+e^{it\mathcal{D}_{DC}}P_{\bot} u_0
  \end{equation*}
 where now
 \begin{equation*}
P_{rad} u_0= u_{0,1}(r)\cdot \Xi_1(\theta)+u_{0,-1} (r)\cdot \Xi_{-1}(\theta)
 \end{equation*}
and
\begin{equation*}
P_\bot u_0=\sum_{k\in\Z^*\backslash\{\pm1\}} u_k(r)\cdot \Xi_k(\theta).
\end{equation*}
\\\\

$\bullet$ {\bf Strichartz estimates for $e^{it\mathcal{D}_{DC}}P_{\bot} u_0$.} Picking up from estimate \eqref{checkpointdc}, we retrace the proof developed in previous subsections: we only need to use Lemma \eqref{localizedlemmaDC} instead of \eqref{localizedlemmafree}. Notice that if $k\neq \pm1$ then $\gamma-1>0$, and the generalized eigenfunctions are regular in the origin. Therefore, the full set of Strichartz estimates are retrieved.
\\\\
$\bullet$ {\bf Strichartz estimates for $e^{it\mathcal{D}_{DC}}P_{rad} u_0$.} Let us consider the cases $k=1$ and $k=-1$ (individually). In this sector the singularity of the generalized eigenfunctions come into play: following the proof of the free case again, in the application of Lemma \eqref{locestdc} we need to impose the condition
$$
\gamma_1-1+\frac3q>0 \Rightarrow q <q(\nu)=\frac3{1-\sqrt{1-\nu^2}}
$$
where we set $\gamma_1 \coloneqq \sqrt{1 - \nu^2}$.
The rest of the proof follows.


\appendix
\section{Auxiliary estimates}
In this appendix we collect some auxiliary estimates on special functions that we have used through the paper.

\subsection{Estimates on Bessel functions}

The first result we provide is some pointwise estimates on Bessel functions which are uniform with respect to the order; these are needed in order to prove generalized Strichartz estimates for the free case and the Aharonov-Bohm magnetic potential.
The result is the following:

\begin{lemma}\label{lembessel}
    Let  $R \in 2^\Z$. Then, for all $q \in [2,+\infty) $ and $\nu \ge 2$
    \begin{equation}
    \label{est_norm_bes}
    \lVert \rho^{-\frac {n-2}2} J_{\nu}(\rho) \rVert_{L^q[R,2R]} \le c \begin{cases}
         R^{\frac 1q}, &\quad \text{ if } R <1   \\
         R^{\beta(q)}, &\quad \text{ if } R \ge 1 
    \end{cases}
    \end{equation}
    \[
     \lVert (\rho^{-\frac {n-2}2} J_{\nu}(\rho))' \rVert_{L^q[R,2R]} \le \tilde c \begin{cases}
         R^{\frac 1q -1 }, &\quad \text{ if } R <1  \\
         R^{\beta(q)}, &\quad \text{ if } R \ge 1 
    \end{cases}
    \]
    where
    $$
\beta(q)=
\begin{cases}
\frac1q-\frac{n-1}2\qquad {\rm if}\: q\in[2,4),\\
\frac1q-\frac{3n-4}6\qquad {\rm if}\: q\in[4,+\infty)
\end{cases}
    $$
  and  where the constants $c, \tilde c$ depend on $q$ but are independent on $\nu$. 
  
  Moreover, for all $q \in [2, + \infty)$ and for any fixed $\nu \in \R$ we have 
  \begin{equation}
    \label{est_norm_bes_nu}
    \lVert \rho^{-\frac {n-2}2} J_{\nu}(\rho) \rVert_{L^q[R,2R]} \le c_\nu \begin{cases}
         R^{\frac 1q + \nu -\frac {n-2}2}, &\quad \text{ if } R <1   \\
         R^{\frac 1q  - \frac{n-1}2}, &\quad \text{ if } R \ge 1 
    \end{cases}
    \end{equation}
    \[
     \lVert (\rho^{-\frac {n-2}2} J_{\nu}(\rho))' \rVert_{L^q[R,2R]} \le \tilde c_\nu \begin{cases}
         R^{\frac 1q + \nu -\frac n2 }, &\quad \text{ if } R <1  \\
         R^{\frac 1q  - \frac{n-1}2}, &\quad \text{ if } R \ge 1. 
    \end{cases}
    \]
  
\end{lemma}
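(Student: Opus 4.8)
The statement decomposes into the uniform-in-order bounds \eqref{est_norm_bes} (for $\nu\ge2$) and the fixed-order bounds \eqref{est_norm_bes_nu}, each to be proved separately in the two regimes $R<1$ and $R\ge1$. In all cases the scheme is identical: I will bound $\rho^{-\frac{n-2}2}J_\nu(\rho)$ and its $\rho$-derivative, pointwise or in an $L^q[R,2R]$-averaged sense, and then read off the power of $R$ from $\lVert\rho^{a}\rVert_{L^q[R,2R]}\simeq R^{a+1/q}$. The derivative estimates are reduced to estimates on the functions themselves via the identities $\big(\rho^{-\frac{n-2}2}J_\nu(\rho)\big)'=\rho^{-\frac{n-2}2}\big(J_\nu'(\rho)-\tfrac{n-2}{2\rho}J_\nu(\rho)\big)$ and $J_\nu'=\tfrac12(J_{\nu-1}-J_{\nu+1})$, together with the trivial remark $\rho^{-1}\lvert J_\nu(\rho)\rvert\le\lvert J_\nu(\rho)\rvert$ for $\rho\ge1$; so the whole proof rests on three classical facts about Bessel functions: the power-series bound $\lvert J_\nu(\rho)\rvert\le(\rho/2)^\nu/\Gamma(\nu+1)$ for $\nu\ge0$; for a fixed order, $\lvert J_\nu(\rho)\rvert\lesssim_\nu\rho^{\nu}$ near $0$ and $\lvert J_\nu(\rho)\rvert\lesssim_\nu\rho^{-1/2}$ for $\rho\ge1$; and the uniform oscillatory bound $\lvert J_\nu(\rho)\rvert\lesssim\big(\lvert\rho^2-\nu^2\rvert+(\nu+1)^{4/3}\big)^{-1/4}$, valid for all $\rho,\nu\ge0$, which simultaneously encodes Landau's estimate $\lvert J_\nu(\rho)\rvert\lesssim\rho^{-1/3}$ near $\rho=\nu$ and the $\rho^{-1/2}$ decay for $\rho\gg\nu$.

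\textbf{The routine cases.} For the fixed-order bounds I would simply insert the fixed-order pointwise estimates: on $(0,1)$, $\rho^{-\frac{n-2}2}\lvert J_\nu(\rho)\rvert\lesssim_\nu\rho^{\,\nu-\frac{n-2}2}$ gives $R^{\frac1q+\nu-\frac{n-2}2}$, while for $\rho\ge1$, $\lvert J_\nu(\rho)\rvert\lesssim_\nu\rho^{-1/2}$ gives $R^{\frac1q-\frac{n-1}2}$; the derivative bounds are identical, the relevant small-$\rho$ monomial being $\rho^{\,\nu-\frac n2}$ (as $J_{\nu-1}$ dominates $J_\nu'$ near $0$) and the large-$\rho$ decay unchanged. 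For the uniform bound with $R<1$, the power-series bound gives, for $\nu\ge2$ and $0<\rho<1$, $\lvert J_\nu(\rho)\rvert\lesssim\rho^{2}$ and $\lvert J_{\nu\pm1}(\rho)\rvert\lesssim\rho$ uniformly in $\nu$ (since $\nu,\nu\pm1\ge1$ and $\Gamma(\mu)\gtrsim1$ there), so for $n\le3$ both $\rho^{-\frac{n-2}2}J_\nu(\rho)$ and its derivative are bounded on $(0,1)$ uniformly in $\nu$; hence their $L^q[R,2R]$-norm is $\lesssim\lVert1\rVert_{L^q[R,2R]}=R^{1/q}\le R^{1/q-1}$ since $R<1$ (in fact slightly stronger than asserted).

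\textbf{The main point: the uniform bound for $R\ge1$.} Here I would use the uniform oscillatory bound and split $[R,2R]$ according to the size of $\nu$. When $\nu\le R/2$ or $\nu\ge4R$ one has $\lvert\rho^2-\nu^2\rvert\gtrsim R^2$ throughout $[R,2R]$, so $\rho^{-\frac{n-2}2}\lvert J_\nu(\rho)\rvert\lesssim R^{-1/2}\rho^{-\frac{n-2}2}$ and the norm is $\lesssim R^{\frac1q-\frac{n-1}2}$. When $R/2\le\nu\le4R$ one has $\lvert\rho^2-\nu^2\rvert+(\nu+1)^{4/3}\gtrsim\lvert\rho-\nu\rvert R+R^{4/3}$ on $[R,2R]$, so the $q$-th power of the norm is $\lesssim R^{-\frac{(n-2)q}2}\int_R^{2R}\big(\lvert\rho-\nu\rvert R+R^{4/3}\big)^{-q/4}\,d\rho$, and a one-variable computation (change variable $u=\lvert\rho-\nu\rvert R$) evaluates this integral to $\lesssim R^{1-q/2}$ for $q<4$ and to $\lesssim R^{1/3-q/3}$ for $q>4$ (with a harmless logarithm at $q=4$). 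Taking $q$-th roots, the worst of the resulting exponents is $\frac1q-\frac{n-1}2=\beta(q)$ for $q\in[2,4)$, and is $\le\beta(q)$ for $q\in[4,\infty)$, which is enough because $R\ge1$; this is the computation in which the breakpoint at $q=4$ in the definition of $\beta$ arises, according to whether the $\lvert\rho-\nu\rvert R$ or the $R^{4/3}$ term controls the integral. The derivative bounds follow verbatim, since $J_\nu'=\tfrac12(J_{\nu-1}-J_{\nu+1})$ satisfies the same uniform bound and $\rho^{-1}\lvert J_\nu(\rho)\rvert\le\lvert J_\nu(\rho)\rvert$ on $[R,2R]$, so $\big(\rho^{-\frac{n-2}2}J_\nu\big)'$ inherits the decay of $\rho^{-\frac{n-2}2}J_\nu$.

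\textbf{Expected obstacle.} Everything is elementary except the uniform $R\ge1$ estimate when $\nu\simeq R$: this is exactly the Airy/transition zone, where $J_\nu$ reaches its maximal size $\sim\nu^{-1/3}$, and one must check that its contribution does not exceed $R^{\beta(q)}$. The precise shape of $\beta$ --- and consequently the thresholds $q(\alpha)$ in \eqref{q-alp} and $q(\nu)$ in Theorem \ref{DCtheorem} on the admissible Strichartz range --- is dictated by this single computation.
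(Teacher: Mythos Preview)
Your proof is correct and follows essentially the same approach as the paper: the paper uses the three-piece pointwise bound $|J_\nu(\rho)|\lesssim e^{-D\nu}$, $\nu^{-1/4}(|\rho-\nu|+\nu^{1/3})^{-1/4}$, $\rho^{-1/2}$ on the respective regions (citing Barcel\'o--C\'ordoba), splits $[R,2R]$ into $I_1\cup I_2\cup I_3$ accordingly, and carries out the same transition-zone integral that produces the $q=4$ breakpoint in $\beta(q)$. Your single uniform bound $(|\rho^2-\nu^2|+(\nu+1)^{4/3})^{-1/4}$ is just a compact repackaging of that three-piece estimate (trading the exponential decay on $I_1$ for a polynomial one, which still suffices), and your $q>4$ computation is in fact slightly sharper than the paper's---the paper simply uses the crude Landau bound $(|\rho-\nu|+\nu^{1/3})^{-1/4}\le\nu^{-1/12}$ there, landing exactly on $\beta(q)$ rather than below it.
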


\begin{proof}
For the sake of brevity we will write down only the proof of the estimates \eqref{est_norm_bes}, \eqref{est_norm_bes_nu}. The ones for the first derivative follows from the relation 
\[
J_\nu'(\rho) = J_{\nu -1}(\rho) - \nu \frac{J_\nu(\rho)}\rho
\]
which holds for every $\nu \in \R$ and $\rho \in (0,+\infty)$. \\
In order to prove this result, we rely on the following well established estimate for Bessel functions, that holds for any $\nu\geq2$:
\begin{eqnarray}\label{estbes}
|J_\nu(r)|\leq C\times
\begin{cases}
e^{-D\nu},\qquad\qquad\qquad\qquad\qquad\; 0<r\leq \nu/2,\\
\nu^{-1/4}(|r-\nu|+\nu^{1/3})^{-1/4},\qquad \nu/2<r\leq 2\nu,\\
r^{-1/2},\qquad\qquad\qquad\qquad\qquad\; 2\nu<r
\end{cases}
\end{eqnarray}
 for some positive constants $C$ and $D$ independent of $r$ and $\nu$ (for this estimate see e.g. \cite{cor}-\cite{barcor}).  We split the proof in two cases: $ R \ge1$ and $ R < 1$. \\
$i) \, R \ge 1$: We write the integral of integration as $[R,2R] = I_1 \cup I_2 \cup I_3$, where
\[
I_1 = [R,2R] \cap [0, \tfrac{\nu}2], \quad  [R,2R] \cap [ \tfrac{\nu}2, 2\nu], \quad I_3 =  [R,2R] \cap [2\nu,+ \infty)
\]
and we estimate each interval separately. \\
For $I_1$ we can assume $2R \le \nu$ otherwise $I_1 = \emptyset$. Let $q \in [2,+\infty)$, then
\[
\int_{I_1} \lvert \rho^{- \frac{n-2}2} J_\nu (\rho) \rvert ^q d\rho \lesssim \int_{I_1} \rho^{- \frac{(n-2)q}2} e^{-D\nu q} d\rho \le C_\alpha R^{-\alpha} \quad \forall \alpha >0.
\]
For $I_2$ we can assume $\tfrac{\nu}4 \le R \le 2 \nu$, otherwise $I_2 = \emptyset$. Let $q \in [2,4)$, we compute the integral and obtain
\begin{equation}
\label{est_I_2_improved}
\begin{split}
\int_{I_2} \lvert \rho^{- \frac{n-2}2} J_\nu (\rho) \rvert^q dr&  \lesssim  \int_{\frac{\nu}2}^{2\nu}  \rho^{- \frac{(n-2)q}2} \nu^{-\frac{q}4}\big ( \big \lvert  \nu - \rho \big \rvert + \nu ^{\frac13} \big ) ^{- \frac q4} d\rho \\
& = \nu^{- \frac{(2n-3)q}4 +1 } \int_{\frac 12}^2 \big (\nu^\frac13 (\nu^\frac 23 \lvert 1-\tau \rvert +1) \big)^{- \frac q4} d\tau  \\
& = \nu^{- \frac{(2n-3)q}4 +1 - \frac q{12}} \int_{-\frac12}^1 \big ( \nu^{\frac23} \lvert y \rvert +1 \big )^{-\frac q4} dy \\
& = \nu^{- \frac{(2n-3)q}4 +1- \frac q{12} - \frac23} \int_{- \frac{\nu^{\frac23}}2}^{\nu^{\frac23}} \big ( 1 + \lvert x \rvert )^{-\frac q4} dx \\ 
& = \nu^{- \frac{6n -8}{12} q + \frac 13} \frac 4{4-q} \bigg [ \big ( 1 + \tfrac{\nu^{\frac23}}2 \big )^{1 - \frac q4} + \big ( 1 + \nu^{\frac23} \big )^{1 - \frac q4} -2 \bigg ] \\
& \lesssim \nu^{- \frac{6n -8}{12} q + \frac 13 + \frac 23 - \frac q6} = \nu^{1 - \frac {n-1}2q} \simeq R^{1 - \frac {n-1}2q} .
\end{split}
\end{equation}
For $q \in [4, + \infty)$ we estimate the norm as
 \begin{equation}
 \label{est_I2}
 \int_{I_2} \rvert\rho^{- \frac{n-2}2} J_\nu (\rho) \rvert^q d\rho  \lesssim \int_{I_2} \nu^{- \frac{2n-1}2 q} \big ( \big \lvert  \nu - r \big \rvert + \nu ^{\frac13} \big ) ^{- \frac q4} \rho^{\frac q2} d\rho \lesssim \nu^{- \frac{2n-1}2q - \frac 1{12}q} \int_R^{2R} r^{\frac{q}2} \lesssim R^{1 - \frac{3n -4}6 q}.
 \end{equation}
 Lastly, for $I_3$ we get 
 \[
 \int_{I_3} \lvert \rho^{- \frac{n-2}2} J_\nu (\rho) \rvert^q d\rho \lesssim \int_R^{2R} \rho^{- \frac {n-1}2q} d\rho \lesssim R^{ 1 - \frac {n-1}2q }.
 \]
 $ii)\,  R < 1$:
In order to prove the estimate for $R <1$, let us state a refined version \eqref{estbes} that holds for any $\nu \ge 2$ and $\rho \le1$. Let us recall the Taylor expansion of the Bessel functions
\[
J_\nu(\rho) = \Big ( \frac \rho2 \Big) ^\nu \sum_{k=0}^\infty \frac{(-1)^k}{k! \Gamma (\nu + k +1)} \Big ( \frac \rho2 \Big)^{2k}. 
\]
Therefore, if $\rho \le 1$ 
\[
\lvert J_\nu (\rho)\rvert \le \Big ( \frac \rho2 \Big) ^\nu \sum_{k=0}^\infty \frac 1{k! 2^{2k}} \frac 1{|\Gamma(\nu +k +1)|}.
\]
 By Stirling's approximation, if $k \gg 1$
 \[
 \frac 1{|\Gamma(\nu +k +1)|} \sim (\nu +k +1) \Big ( \frac e{\nu +k +1} \Big )^{\nu +k +1}.
 \]
 Therefore, there exists a constant independent of $\nu $ such that 
 \[
 \lvert J_\nu(\rho) \rvert \le c e^{-\nu}\rho^\nu 
 \]
 for any $\rho \le 1$.  Let $q \in [2, +\infty)$, we observe that $r \in [R, 2R]$, $R=2^{-n}, n >0$ implies $r \le 1$. Therefore, we have
 \[
 \bigg (\int_R^{2R} \lvert \rho^{- \frac{n-2}2} J_\nu (\rho) \rvert ^q d\rho \bigg)^{\frac 1q} \lesssim R^{\frac 1q}.
 \]
 
 
 The estimates we proved hold uniformly in  $\nu$, for $\nu \ge 2$. However, both in the free and Aharonov-Bohm cases we will need similar estimates for a finite number of Bessel functions with indices $\nu \in [-\tfrac 12, 2)$. We recall that for any fixed $\nu \in \R$ the following holds
 \[
 \begin{split}
 \lvert J_\nu (\rho) \rvert \le c \rho^\nu \quad \text{if } \rho \le 1 \quad \text{and} \quad \lvert J_\nu(\rho) \rvert \le c\rho^{-\frac12} \quad \text{if } \rho \ge 1.
 \end{split}
 \]
Therefore, for any finite $q \in [2, +\infty)$, we get that if $R \ge 1$
 \[
 \int_R^{2R} |\rho^{- \frac{n-2}2 } J_\nu (\rho)|^q d\rho \le c_1 R^{1 - \frac{n-1}2 q} 
 \]
 and the same holds for the first derivative. Instead, if $R <1$ we have
 \[
 \int_R^{2R} |\rho^{- \frac{n-2}2 } J_\nu (\rho)|^q d\rho \le c_2 R^{1 + \nu q - \frac{n-2}2q}
 \]
 and
 \[
 \int_R^{2R} |(\rho^{- \frac{n-2}2 } J_\nu (\rho))'|^q d\rho \le c_3 R^{1 + \nu q - \frac n2q}.
 \]
 Let us underline that here the constants $c_i$, $i=1,2,3$ depend on $\nu$. 
\end{proof}

\subsection{Estimates on Confluent hypergeometric functions.}\label{confappendix} The pointwise estimates for the generalized eigenfunctions of the Dirac-Coulomb operator are significantly more delicate. We prove the following

\begin{theorem}
\label{thm_formuladc}
Let $\nu \in [-1, 0)$. Consider the function 
$$\Phi_k^{DC}(N,\rho) = (iF_k^{DC} + G_k^{DC})(N,\rho)$$ with $\rho \in (0,+\infty)$ and $N \in \{2^n \}_{n \in \N} \cup \{2^{-n}\}_{n \in \N*}$ as defined in \eqref{completeeigendc} (we are taking $y=1$). 
Then, for all $\lvert k \rvert \ge 2$, there exist positive constants $C\,,\,D$ independent of $k\,,\,\nu$ such that the following pointwise estimates hold for all $\rho\in (0,+\infty)$: 
\begin{equation}\label{esteigformuladc1}
|\Phi_k^{DC}(N,\rho)| \leq 
\begin{cases}
(\min\{\rho/2\,,\,1\})^{\gamma-1}e^{-D\vert k \vert}\,,\qquad\quad\, 0<\rho \leq \max\{\vert k \vert/2\,,\,2\},\\
\vert k \vert^{-\frac34}\big(|\,\vert k \vert-\rho \,|+\vert k \vert^{\frac13}\big)^{-\frac14},\quad\quad \frac{\vert k \vert}2\leq \rho\leq 2\vert k \vert,
\\
\rho^{-1}
,\qquad\qquad\qquad\qquad\qquad\quad\quad\quad\;\,\, \rho \geq 2\vert k \vert,
\end{cases}
\end{equation}
and
\begin{equation}\label{esteigformuladc2}
|(\Phi_k^{DC})'(N,\rho)| \leq 
\begin{cases}
(\min\{\rho/2\,,\,1\})^{\gamma-2}e^{-D\vert k \vert}\,,\qquad\quad\, 0<\rho\leq \max\{\vert k \vert/2\,,\,2\},\\
\vert k \vert^{-\frac34}\big(|\,\vert k \vert-\rho\,|+\vert k \vert^{\frac13}\big)^{-\frac14},\quad\quad \frac{\vert k \vert}2\leq \rho\leq 2\vert k \vert,
\\
\rho^{-1}
,\qquad\qquad\qquad\qquad\qquad\quad\quad\quad\;\,\, \rho \geq 2\vert k \vert.
\end{cases}
\end{equation}
Moreover, for $k=\pm1$ the following estimates hold.
     \begin{equation}
     \label{esteigformuladcrad1}
     \lvert \Phi_{\pm1}^{DC} (N,\rho) \rvert \le c_{\gamma_1} 
     \begin{cases}
         \rho^{\gamma_1 - 1}, \quad &\rho \le 1,\\
         \rho^{-1}, \quad   &\rho > 1
     \end{cases}
      \end{equation}
     and
     \begin{equation}
     \label{esteigformuladcrad2}
     \lvert (\Phi_{\pm1}^{DC})' (N,\rho) \rvert \le c_{\gamma_1} 
     \begin{cases}
         \rho^{\gamma_1 - 2}, \quad & \rho \le 1,\\
         \rho^{-1}, \quad   &\rho > 1
     \end{cases}
     \end{equation}
   where $\gamma_1 \coloneqq \sqrt{1 - \nu^2}$.
\end{theorem}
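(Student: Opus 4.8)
The plan is to reduce the whole statement to a steepest--descent analysis of a single oscillatory integral, following the massless template of \cite{cacserzha} and checking that the mass term contributes only harmless scalar factors. First I would use $\Phi_k^{DC}=iF_k^{DC}+G_k^{DC}$, the formulas \eqref{F_krescaled}--\eqref{G_krescaled} and $|N^\pm|\le1$ (see \eqref{Enorm}) to bound
\[
|\Phi_k^{DC}(N,\rho)|\ \lesssim\ e^{\frac{\pi\alpha_N}{2}}\,\frac{|\Gamma(\gamma+1+i\alpha_N)|}{\Gamma(2\gamma+1)}\,\frac{(2\rho)^{\gamma-\frac12}}{\sqrt\rho}\,\big|{}_1F_1(\gamma-i\alpha_N,2\gamma+1,2i\rho)\big|,
\]
and then to insert the classical Euler integral representation of ${}_1F_1$ (legitimate since $\Re(2\gamma+1)>\Re(\gamma-i\alpha_N)>0$). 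After an affine change of variable all the $\Gamma$--factors cancel and one is left with
\[
|\Phi_k^{DC}(N,\rho)|\ \lesssim\ \frac{e^{\frac{\pi\alpha_N}{2}}}{|\Gamma(\gamma-i\alpha_N)|}\,2^{-\gamma}\,\rho^{\gamma-1}\,\big|I_{\gamma,\rho,\alpha_N}\big|,\qquad I_{\gamma,\rho,\alpha}:=\int_{-1}^{1}e^{-i\rho t}(1+t)^{\gamma-1-i\alpha}(1-t)^{\gamma+i\alpha}\,dt.
\]
The mass enters only through $\alpha_N=\nu\sqrt{N^2+m^2}/N$ and, inside $I_{\gamma,\rho,\alpha_N}$, merely as a phase on the real segment, since $|(1\pm t)^{\mp i\alpha}|=1$ for $t\in(-1,1)$; replacing $\alpha_N$ by $\nu$ recovers the massless integral handled in \cite{cacserzha}.

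Next I would dispose of the scalar prefactor, uniformly in $k$, $\nu$ and $N$. Bounding $|I_{\gamma,\rho,\alpha_N}|\le B(\gamma,\gamma+1)=\Gamma(\gamma)\Gamma(\gamma+1)/\Gamma(2\gamma+1)$ and combining Stirling's formula with the Weierstrass product for $\Gamma$ (which yields $|\Gamma(\gamma-i\alpha)|\gtrsim\gamma^{\gamma-\frac12}e^{-\gamma}e^{-\frac\pi2|\alpha|}$ uniformly), one gets
\[
\frac{e^{\frac{\pi\alpha_N}{2}}}{|\Gamma(\gamma-i\alpha_N)|}\,2^{-\gamma}\,B(\gamma,\gamma+1)\ \le\ C\,e^{-D\gamma}
\]
with $C,D>0$ independent of $k,\nu,N$: the factor $e^{\pi\alpha_N/2}=e^{-\frac\pi2|\alpha_N|}$ (recall $\nu<0$) exactly absorbs the $e^{\frac\pi2|\alpha_N|}$ produced by $|\Gamma(\gamma-i\alpha_N)|^{-1}$, and the $\Gamma(2\gamma+1)$ in $B$ supplies the exponential decay in $\gamma\sim|k|$. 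Together with the surviving $\rho^{\gamma-1}$ this already gives the first line of \eqref{esteigformuladc1} for $0<\rho\le\max(|k|/2,2)$, once $|I_{\gamma,\rho,\alpha_N}|$ has been further refined in the classically forbidden range.

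The core is a steepest--descent / stationary--phase study of $I_{\gamma,\rho,\alpha_N}$ with leading exponent $\gamma\log(1-t^2)-i\rho t$; on the relevant contour the subleading factor $(1+t)^{-i\alpha_N}(1-t)^{i\alpha_N}$ is controlled by the exponential gain of the previous step. Writing $t=is$, the saddle equation is $\rho s^2+2\gamma s+\rho=0$, so the two saddles are real and distinct for $\rho<\gamma$ (exponential smallness), coalesce at $\rho=\gamma$ (Airy behaviour), and become complex conjugate for $\rho>\gamma$ (genuine oscillation). Deforming the contour as in \cite{cacserzha}, one obtains, exactly as in the large--order Bessel asymptotics \eqref{estbes}, that $\rho^{\gamma-1}|I_{\gamma,\rho,\alpha_N}|$ is exponentially small in $\gamma$ for $\rho\le\gamma/2$, of size $\gamma^{-3/4}\big(|\gamma-\rho|+\gamma^{1/3}\big)^{-1/4}$ for $\gamma/2\le\rho\le2\gamma$ (uniform Airy asymptotics), and of size $\rho^{-1}$ for $\rho\ge2\gamma$ (the endpoint $t=-1$, where $(1+t)^{\gamma-1}$ is an integrable singularity, contributing the dominant $\rho^{-\gamma}$; the interior being negligible by non--stationary phase). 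Since $\big|\,|k|-\gamma\,\big|=\nu^2/(|k|+\gamma)\le1$, one may replace $\gamma$ by $|k|$ throughout, yielding \eqref{esteigformuladc1}. The derivative bound \eqref{esteigformuladc2} follows by differentiating the integral representation in $\rho$: the prefactor $\rho^{\gamma-1}$ becomes $\rho^{\gamma-2}$ while $\partial_\rho$ under the integral sign only inserts the bounded factor $-it$, so the same three--regime analysis applies.

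For $k=\pm1$ one has $\gamma=\gamma_1=\sqrt{1-\nu^2}\in(0,1]$, which takes only finitely many values; uniformity in $k$ is then irrelevant and one may use $\nu$--dependent constants. The integral $I_{\gamma_1,\rho,\alpha_N}$ still converges ($\gamma_1-1>-1$), $|I_{\gamma_1,\rho,\alpha_N}|\le B(\gamma_1,\gamma_1+1)$ gives $|\Phi_{\pm1}^{DC}|\lesssim\rho^{\gamma_1-1}$ for $\rho\le1$, while for $\rho\ge1$ the integrable endpoint $(1+t)^{\gamma_1-1}$ at $t=-1$ produces $|I_{\gamma_1,\rho,\alpha_N}|\lesssim\rho^{-\gamma_1}$ (the $t=1$ endpoint contributing $O(\rho^{-\gamma_1-1})$ and the interior being negligible), hence $|\Phi_{\pm1}^{DC}|\lesssim\rho^{-1}$; this is \eqref{esteigformuladcrad1}, and differentiating in $\rho$ as above gives \eqref{esteigformuladcrad2}. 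I expect the main obstacle to be the uniform (in $|k|$, $\nu$, $N$) treatment of the turning--point window $\rho\approx|k|$: one must check that the mass does not shift the turning point away from $\rho\sim\gamma\sim|k|$ nor alter the Airy scale $|k|^{1/3}$, and that the perturbation $(1\pm t)^{\mp i\alpha_N}$ remains harmless along the steepest--descent path even when $|\alpha_N|$ is large (i.e. $N$ small), which is exactly where the exponential gain from the $\Gamma$--prefactor is used.
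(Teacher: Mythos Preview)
Your overall strategy---reducing to the oscillatory integral $I_{\gamma,\rho,\alpha_N}$ via the Euler representation and then running a three-regime steepest-descent analysis---matches the paper's approach. However, there is a genuine gap in how you control the mass-dependent factor $(1+t)^{-i\alpha_N}(1-t)^{i\alpha_N}$ off the real axis.

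On the real segment this factor is indeed unimodular, and the cancellation $e^{\pi\alpha_N/2}\cdot e^{\frac{\pi}{2}|\alpha_N|}=1$ in the prefactor is correct. But once you deform into the complex plane, $|(1\pm z)^{\mp i\alpha_N}|=e^{\pm\alpha_N\operatorname{Arg}(1\pm z)}$ can grow like $e^{c|\alpha_N|}$ along the contour. Since $|\alpha_N|\sim m/N$ is \emph{unbounded} as $N\to0$, while the ``exponential gain of the previous step'' you invoke is $e^{-D\gamma}\sim e^{-D|k|}$ with $|k|$ fixed, this gain cannot absorb an $e^{c|\alpha_N|}$ loss. The paper's resolution is different and hinges on the sign $\alpha_N<0$: the contour is pushed into $\{\Im z<0\}$, where $\operatorname{Arg}\big(\tfrac{1-z}{1+z}\big)\in[0,\pi]$, so that $|g(z)|\le|1+z|$ uniformly in $N$. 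This sign mechanism is exactly why the theorem is stated only for the repulsive case $\nu<0$ (cf.\ Remark~\ref{criticalregion}); your argument as written would apply equally to $\nu>0$, which is false.

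A second gap appears for $k=\pm1$ when $\gamma_1<\tfrac12$ (i.e.\ $|\nu|>\sqrt3/2$, inside the stated range). There Stirling gives $e^{\pi\alpha_N/2}/|\Gamma(\gamma_1-i\alpha_N)|\sim|\alpha_N|^{\frac12-\gamma_1}\sim N^{\gamma_1-\frac12}\to\infty$ as $N\to0$, so your trivial bound $|I_{\gamma_1,\rho,\alpha_N}|\le B(\gamma_1,\gamma_1+1)$ does not yield constants independent of $N$. The paper compensates by an integration-by-parts argument (writing the $\alpha_N$-dependent power as a total derivative) that extracts a factor $|\gamma_1+i\alpha_N|^{-1}\sim N$ from $I$, both for $\rho\le1$ and on the deformed contour for $\rho>1$.
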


\begin{proof}
The estimate in the massless case is proved in \cite{cacserzha}, Theorem 1.1. The presence of a mass term $m>0$ makes the analysis more sophisticated, as indeed the dependence of the function from the variables $E$ and $\rho$ is not the same here; we shall thus briefly retrace the proof developed in \cite{cacserzha}, and highlight where the main differences appear. 

We shall focus only on the proof of \eqref{esteigformuladc1}, as the one of \eqref{esteigformuladc2} can be obtained with minor modifications.

As a starting point, let us recall the following expression for $\Phi_k^{DC}(N,\rho)$:
\begin{equation}\label{completeeigendc}
\Phi_k^{DC}(N,\rho)=( G_k^{DC}+ iF_k^{DC}) (N, \rho)= e^{\frac{\pi \alpha_{N}}2} \frac{\lvert \Gamma (\gamma +1 +i\alpha_{N}) \rvert}{\Gamma (2\gamma +1)} \frac{(2  \rho)^\gamma}\rho e^{i\rho + i \xi(N)} {_1F_1} (\gamma -i \alpha_{N}, 2\gamma +1, 2i\rho) 
\end{equation}
where 
\[
\alpha_N \coloneqq \frac{\nu \sqrt{N^2 + m^2}}{N}, \quad \gamma=\sqrt{k^2-\nu^2},\quad e^{2i  \xi(N)}=\frac{- k+\frac{i\nu m}N}{\gamma-i\alpha_N}.
\]
Notice that if $m=0$ then $\alpha_N=\nu$, which allows to significantly simplify the expression above; as a matter of fact, the term $\alpha_N$ will indeed encode all the differences between the massless and massive cases. It is important to stress that, in our repulsive setting $\nu<0$, the term $\alpha_N<0$; this will be a key fact in our estimates.

We recall the following integral representation (see \cite{abram} pag.505):
\begin{equation}\label{eigenintraprep}
\Phi_k^{DC}(N,\rho) = \frac{|\Gamma(\gamma+1+i\alpha_N)|}{\Gamma(\gamma+1+i\alpha_N)}\frac{e^{\frac{\pi\alpha_N}2}e^{i  \xi}{\rho}^{\gamma-1}
}{2^{\gamma+1/2}\Gamma(\gamma-i\alpha_N)}
\int_{-1}^1e^{i\rho t}(1+t)^{\gamma-1-i\alpha_N}(1-t)^{\gamma+i\alpha_N} dt
\,.
\end{equation}
We can write
\begin{equation}\label{moduleigen}
|\Phi_k^{DC}(N,\rho)|= \frac{e^{\frac{\pi\alpha_N}2}\rho^{\gamma-1}
}{2^{\gamma+1/2}|\Gamma(\gamma-i\alpha_N)|}
|I_{\gamma,\rho,N}|
\end{equation}
with
\begin{equation}\label{integral}
I_{\gamma,\rho,N}=\int_{-1}^1 e^{i\rho t}(1+t)^{\gamma-1-i\alpha_N}(1-t)^{\gamma+i\alpha_N} dt =\int_{-1}^1 e^{-i\rho t}(1-t)^{\gamma-1-i\alpha_N}(1+t)^{\gamma+i\alpha_N} dt \,.
\end{equation}
Now, it is convenient to analyze separately the cases of ``large'' and ``small'' energies, that is $N=2^n$ and $N=2^{-n}$, $n \in \N$. \\\\
$\bullet$ {\bf Case $N=2^n$.} Notice that in this case $\langle m \rangle \nu \le \alpha_N \le \nu$. Therefore the proof of \eqref{esteigformuladc1} strictly follows the one of Theorem 1 in \cite{cacserzha}.
\\\\
$\bullet$ {\bf Case $N=2^{-n}$.} This situation is more subtle, as indeed getting closer to the ``threshold energy'' $E=m$ provides additional difficulties. In this case, the term $\alpha_N$ is still bounded from above but it is not bounded from below any more. In particular, we have that $\alpha_N \sim_{n \to + \infty}- N^{-1}$. Therefore, in order to prove our desired estimate, we will need to adapt the strategy developed for the massless case, providing new bounds on the terms which depend on $N$. Let us give more details.\\

We start by estimating the prefactor $\frac{e^{\frac{\pi\alpha_N}2}\rho^{\gamma-1}
}{2^{\gamma+1/2}|\Gamma(\gamma-i\alpha_N)|}$ which appears on the RHS of formula \eqref{moduleigen}. Stirling's formula yields
\[
\frac 1{\Gamma(\gamma -i \alpha_N)} = \frac{ \sqrt{\gamma -i \alpha_N}}{\sqrt{2\pi}} \frac{e^{\gamma -i \alpha_N}}{(\gamma - i\alpha_N)^{\gamma -i\alpha_N}}\quad \text{ for } \gamma,n \gg 1.
\]
Therefore, if $\gamma \ge \frac 12$, we have 
\[
\begin{split}
\frac 1{|\Gamma(\gamma -i \alpha_N)|} \le \frac 1{\sqrt{2\pi}} \frac{e^\gamma}{| \gamma - i \alpha_N|^{\gamma - \frac 12}} e^{- \alpha_N Arg (\gamma -i\alpha_N)} \le C \frac{e^\gamma}{|k|^{\gamma - \frac 12}}e^{\frac \pi 2 |\alpha_N|}.
\end{split}
\]
We thus obtain the following estimate
\begin{equation}
    \label{est_prefactor}
    \frac{e^{\frac{\pi\alpha_N}2}\rho^{\gamma-1}
}{2^{\gamma+1/2}|\Gamma(\gamma-i\alpha_N)|} \le C \bigg ( \frac{e}{2|k|} \bigg)^{\gamma- \frac 12} \rho^{\gamma -1}
\end{equation}
where the constant $C$ does not depend on $k,N$. We observe, however, that if $|k|=1 $ and $\nu < - \frac{\sqrt 3}2$ then $\gamma < \frac 12$. In this case, $
\lvert \gamma-i \alpha_N \rvert = \sqrt{\alpha_N^2 + \gamma_1^2} = N^{-1} \sqrt{ \nu^2 m^2 + N^2}$, and thus
\begin{equation}
\label{est_gamma_alpha_n}
c_{\nu,m} N^{-1} \le \lvert \gamma_1-i \alpha_N \rvert  \le C_{\nu,m} N^{-1}.
\end{equation}
Therefore, the prefactor can be estimated as
\begin{equation}
    \label{est_prefactor_2}
 \frac{e^{\frac{\pi\alpha_N}2}\rho^{\gamma_1-1}
}{2^{\gamma_1+1/2}|\Gamma(\gamma_1-i\alpha_N)|} \le C_{\gamma_1} N^{ \gamma_1-\frac 12 } \rho^{\gamma_1 - 1}.   
\end{equation}
We now need to bound the modulus of the integral $I_{\gamma,\rho,N}$ given by \eqref{integral} that we rewrite as
\begin{equation}
\label{integral_phase_ampl}
I_{\gamma, \rho,N} = \int_{-1}^1 g(t) e^{\rho h_q(t) dt}
\end{equation}
with 
\[
q = \frac{\gamma-1}\rho,\quad g(t)= (1+t)^{1+i\alpha_N}(1-t)^{-i\alpha_N} \quad \text{ and } \quad h_q(t)=-it + q \ln (1-t^2).\\
\]
It is important to stress the fact that the function $h_q(t)$ does not depend explicitly on $E$ (that is, on $N$): this allows to retrieve most of the computations developed in the massless case in \cite{cacserzha}. On the other hand, the function $g(t)$ does depend on $E$: as a consequence, in the application of the Laplace's Method, we will need some additional estimates uniform with respect to $N$ on the function $g$.

We deal with three different sectors separately.

\textbf{Sector $0< \rho \le \max (\frac {|k|}2,2)$.} 
Let $\gamma \ge \frac 12$. We observe that $I_{\gamma,\rho,N}$ is uniformly bounded. Indeed,
\[
\lvert I_{\gamma,\rho,N} \rvert \le \int_{-1}^1 (1-t^2)^{\gamma -1}(1-t)^1 dt \le 2.
\]
Using \eqref{est_prefactor}, if $2 \le \rho \le \frac{|k|}2$, we get that $\lvert \Phi_k \rvert \le C \big ( \frac e4 \big)^\gamma \le C e^{-D|k|}$. This covers the case of large $k$. 
Then, if $0 < \rho < 2$ and $\lvert k \rvert\le 4$, we can write an estimate in the form $\lvert \Phi_k \rvert \le C e^{-D|k|} \big ( \frac \rho2 \big)^{\gamma -1}$. The combination of these gives the first estimate in \eqref{esteigformuladc1} for any $k$.\\
Let now $\gamma < \frac 12$, then we need a finer estimate of $I_{\gamma, \rho,N}$ in order to compensate the factor $N^{\gamma_1 - \frac 12}$ in \eqref{est_prefactor_2}. To do so, we split the integral 
\[
I_{\gamma_1, \rho,N} = \int_{-1}^1 e^{-i\rho t} (1-t)^{\gamma_1-1 - i \alpha_N} (1+t)^{\gamma_1+ i \alpha_N }dt = \int_{-1}^0 \dots dt + \int_0^1 \dots dt = I_- + I_+.
\]
We estimate each term separately; we observe that 
\[
I_- = \int_{-1}^0  \Big ( \frac{1}{i \alpha_N + \gamma_1 + 1} \Big ( \frac {1+t}{1-t} \Big )^{\gamma_1 +1 + i \alpha_N}\Big )' \frac{e^{-i\rho t}}2 (1-t)^{2\gamma_1 + 1} dt.
\]
By integration by parts, we get
\[
\lvert I_- \rvert \le \frac 1{2 \lvert i \alpha_N + \gamma_1 + 1\rvert } \Big [ 1 + \int_{-1}^0 \Big ( \frac{1+t}{1-t}\Big )^{\gamma_1+1} (1-t)^{2\gamma_1 } \lvert i \rho(1-t) + 2\gamma_1 + 1\Big  ] \le C N
\]
where for the last inequality we recall the estimate \eqref{est_gamma_alpha_n} and $\rho < 2$. We can estimate $\bar I_+$ similarly, writing $ \frac{-2}{(1+t)^2} \big ( \frac{1-t}{1+t} \big )^{i \alpha_N + \gamma -1} = \Big ( \frac 1{i \alpha_N + \gamma}\big ( \frac{1-t}{1+t} \Big )^{i \alpha_N + \gamma} \Big )' $. Summing up, we get
\[
\lvert \Phi_k \rvert \le c_{\gamma_1} N^{\gamma_1 + \frac 12} \rho^{\gamma_1 -1} \le  c_{\gamma_1} \rho^{\gamma_1 -1}.
\]

\textbf{Sector $ \rho \ge \max (2,\frac {(\gamma +1)^2}2)$.} 
We start by deforming the interval of integration $(-1,1)$ into the bounded contour $(-1, -1 -iA] \cup [-1 -iA, 1-iA] \cup [1-iA,1)$ for $A>0$. We then pass to the limit for $A \to + \infty$ and we obtain
\[
\int_{-1}^1 g(t) e^{\rho h_q(t)} dt = \int_{-1}^{-1 -i\infty} g(z) e^{\rho h_q(z)} dz + \int_{1-i\infty}^1 g(z) e^{\rho h_q(z)} dz =\colon I^{(-1)} + I^{(1)}.
\]
Indeed, recalling that $\alpha_N <0$ and $Arg \big ( \frac{1-z}{1+z} \big ) > 0$ the function $g(z)e^{\rho h_q(z)}$ decays exponentially when $\Im(z) \to - \infty$, therefore
\[
\lim_{A \to + \infty} \int_{-1-iA}^{1+iA} g(z)e^{\rho h_q(z)} dz =0.
\]
We are then led to the estimate of $I^{(1)}$ and $I^{(-1)}$; we observe that 
\[
I^{(1)} = i \int_0^\infty e^{-i\rho}e^{- \rho t} (it)^{\gamma - 1 - i\alpha_N} (2-it)^{\gamma + i \alpha_N}.
\]
Therefore, for any $\gamma \ge 0$
\[
\lvert I^{(1)} \rvert \le 2^\gamma \int_0^\infty e^{-\rho t} t^{\gamma-1}\bigg ( 1 + \frac{t^2}4 \bigg )^{\frac \gamma2} dt .
\]
Since $(1+a^2)^b \le (1+a)^{2b} \le e^{2ab}$ for all $a,b \in \R^+$, when $0 \le \gamma < 2\rho$ the above inequalities implies 
\[
\lvert I^{(1)} \rvert \le C 2^\gamma \int_0^\infty e^{- (\rho - \frac \gamma2 ) t } t^{\gamma -1} dt = C 2^\gamma \bigg (\rho - \frac \gamma2 \bigg )^{-\gamma} \Gamma(\gamma).
\]
Similarly, when $1 \le \gamma < 2\rho -1$, 
\[
\lvert I^{(-1)} \rvert \le C 2^{\gamma -1} \int_0^\infty e^{- (\rho - \frac{\gamma-1}2)t} t^\gamma dt = C 2^{\gamma-1} \bigg ( \rho - \frac{\gamma -1}2 \bigg )^{-\gamma -1} \Gamma (\gamma +1).
\]
Moreover, by Stirling's formula, as $\gamma \gg 1$
\[
\Gamma (\gamma) \lesssim \gamma^{\gamma - \frac 12} e^{-\gamma}, \quad \Gamma (\gamma +1) \lesssim (\gamma +1)^{\gamma - \frac 12} (\gamma +1) e^{-\gamma}.
\]
Combining these estimates with \eqref{moduleigen} and  \eqref{est_prefactor} we get
the desired estimate. \\
Let now $\lvert k \rvert =1$ and $\rho >1$. As before, we divide the integral $I_{\gamma_1, \rho,N} = I^{(-1)} + I^{(1)}$. By integration by parts we get
\[
I^{(1)} =  \int_0^{+ \infty} \frac{(it)^{\gamma_1 -i\alpha_N}}{\gamma_1 - i \alpha_N} \frac{d}{dt} \Big ( e^{\rho(1+t)} (2-it)^{\gamma + i\alpha_N} \Big).
\]
This implies, recalling that $\lvert \gamma_1 - i \alpha_N \rvert \sim_{N \to 0} C_\nu N^{-1}$ and $\frac 12 \le \gamma _1 \le 1$,
\[
\lvert I^{(1)} \rvert \le C (\rho+1) 2^{\gamma_1} \int_0 ^\infty t^{\gamma_1} e^{-\rho t} \Big (1 + \frac{t^2}4 \Big )^{\frac {\gamma_1} 2} \le c (\rho +1) 2^{\gamma_1} \Big (\rho - \frac {\gamma_1}2 \Big )^{- \gamma_1 -1} \Gamma (\gamma_1 +1).
\]
We can estimate $I^{(-1)}$ more easily:
\[
\lvert I^{(-1)} \rvert \le c 2^{\gamma_1} \int_0^{+\infty} e^{-\rho t} \Big ( 1 + \frac{t^2}4 \Big )^{\frac{\gamma_1-1}2 } t^{\gamma_1} dt \le c 2^{\gamma_1}  \Big (\rho - \frac {\gamma_1}2 \Big )^{- \gamma_1 -1} \Gamma (\gamma_1 +1).
\]
Combing these estimates with \eqref{moduleigen} and \eqref{est_prefactor} we get \eqref{esteigformuladcrad1}. \\
In order to balance the prefactor, in the case $\gamma < \frac 12$ we need an improved estimate. 
We define
\[
I \coloneqq \int_0^\infty e^{-\rho t } t^{\gamma_1 -1 + i \alpha_N} (2+it)^{\gamma_1 - i\alpha_N}dt
\]
so that $\lvert \bar I^{(1)} \rvert = \lvert I \rvert $. We then estimate $I$; we change variable $y=\rho t$ 
\[
\begin{split}
I &=  \rho^{-2\gamma_1} \int_0 ^\infty e^{-y} y^{\gamma_1 -1 + i\alpha_N} (2\rho +iy)^{\gamma_1 - i\alpha_N}dy\\
& =  \rho^{-2\gamma_1-1} \int_0 ^\infty e^{-y}  (2\rho +iy)^{2\gamma_1 +1 } \Big ( \frac 1{\gamma_1 + i \alpha_N} \Big ( \frac y{2\rho + iy} \Big )^{\gamma+ i \alpha_N} \Big )' dy
\end{split}
\]
Then, integrating by parts we get
\[
\lvert I \rvert  \lesssim \frac{\rho^{-2\gamma_1 -1} }{\lvert \gamma_1 + i \alpha_N \rvert } \int_0^ \infty e^{-y} y^{\gamma_1} \lvert 2\rho + i y \rvert^{\gamma_1} \, \lvert -(2\rho + iy)  +i(2\gamma_1 +1) \rvert e^{-\alpha_N Arg(2\rho +iy)}dy.
\]
We recall $\alpha_N < 0$ and $\rho > 1$ so that 
\[
\begin{split}
\lvert I \rvert & \lesssim \frac{\rho^{-2\gamma_1 -1} }{\lvert \gamma_1 + i \alpha_N \rvert } \int_0^ \infty e^{-y} y^{\gamma_1} \big ((2\rho)^{\gamma_1 +1} + y^{\gamma_1 +1} \big ) dy \\
& \lesssim N (\rho^{-\gamma_1} \Gamma (\gamma_1 +1) + \rho^{-2\gamma_1 -1 } \Gamma (2\gamma_1 +2) ) \lesssim N.
\end{split}
\]
With similar computations we get that the same bound holds also for $I^{(-1)}$. Then, combing these estimates with \eqref{moduleigen} and \eqref{est_prefactor_2} we get \eqref{esteigformuladcrad1}. 

\textbf{Sector $\frac{|k|}2 < \rho \le \frac {(\gamma +1)^2}2$, $|k| \ge 2$:} In this sector, we can adapt the estimates provided in \cite{cacserzha} for the generalized eigenfunctions related to the massless Dirac-Coulomb equation. Indeed, as remarked, the phase $h_q(t)$ in integral \eqref{integral_phase_ampl} is the same, and thus the curves in the steepest descent methods are the same. 
Moreover, in the half-plane $\{ z \in \C \colon \Im(z) <0 \}$ we have that the term $ Arg \big (\frac{1-z}{1+z} \big ) \in [0, \pi]$. Therefore, 
\[
\lvert g(z) \rvert = \lvert 1-z \rvert e^{\alpha_N Arg \big ( \frac{1-z}{1+z} \big)} \le \lvert 1-z \rvert
\]
which implies that the function $g$ is uniformly bounded, and this is enough to conclude. 
\end{proof}

From Theorem \ref{thm_formuladc} we can deduce the following

\begin{corollary}\label{finalcorDC}
    Let $\nu \in [-1, 0)$ and let $\Phi_k^{DC}(N,\rho)$ as in \Cref{thm_formuladc}. Then, for any $q \in [2, + \infty]$  and $\lvert k \rvert \ge 2$ there exists a constant $C >0$, independent on $k$ such that the following estimates hold
    \[
\big \lVert \Phi_k^{DC}  \big \rVert_{L^q ([R,2R])} \le C \times 
\begin{cases}  R^{\gamma + \frac1q - 1}, &\quad \text{ if } R \le 1, \\ 
R^{\beta (q)}, &\quad \text{ if } R \ge 1,
\end{cases}
\]
and
\[
\big \lVert (\Phi_k^{DC})' \big \rVert_{L^q ([R,2R])} \le C \times \begin{cases}
R^{\gamma + \frac 1q - 2}, &\quad R \le 1, \\
R^{\beta(q)}, &\quad R \ge 1,
\end{cases}
\]
where 
\[
\beta(q) = \begin{cases}
\frac1q - 1&\quad \text{ if } q \in [2,4),\\
\frac1q - \frac56 &\quad \text{ if } q \in [4, + \infty].
\end{cases}
\]
Moreover, if $\lvert k \rvert =1$ 
\[
\big \lVert \Phi_{\pm 1}^{DC}  \big \rVert_{L^q ([R,2R])} \le C \times 
\begin{cases}  R^{\gamma_1 + \frac1q - 1}, &\quad \text{ if } R \le 1, \\ 
R^{\frac 1q -1}, &\quad \text{ if } R \ge 1
\end{cases}
\]
and
\[
\big \lVert (\Phi_{\pm1}^{DC})' \big \rVert_{L^q ([R,2R])} \le C \times \begin{cases}
R^{\gamma_1 + \frac 1q - 2}, &\quad R \le 1, \\
R^{\frac 1q -1}, &\quad R \ge 1.
\end{cases}
\]
\end{corollary}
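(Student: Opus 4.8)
The plan is to read the $L^q([R,2R])$ bounds directly off the pointwise estimates of Theorem \ref{thm_formuladc}, by integrating (or, for $q=\infty$, taking the supremum of) those bounds over the dyadic interval. Since the three regimes in \eqref{esteigformuladc1}--\eqref{esteigformuladc2} are separated by the thresholds $\rho=\max\{|k|/2,2\}$ and $\rho=2|k|$, I would split $[R,2R]$ into its intersections with $(0,\max\{|k|/2,2\}]$, $[|k|/2,2|k|]$ and $[2|k|,+\infty)$ and estimate each piece separately. The structural fact that keeps the final constant independent of $k$ is that one of the two thresholds can lie strictly inside $[R,2R]$ only when $R\sim|k|$, so every power of $|k|$ produced by integrating in the middle regime can be rewritten as the corresponding power of $R$; and when $[R,2R]$ straddles two regimes one simply adds the (at most two) contributions, which are then of the same order.

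For $R\le1$ and $|k|\ge2$ only the first regime is relevant: here $\gamma=\sqrt{k^2-\nu^2}>1$, the theorem gives $|\Phi_k^{DC}|\lesssim(\rho/2)^{\gamma-1}$ on $[R,2R]\subset(0,2]$, so $\int_R^{2R}\rho^{(\gamma-1)q}\,d\rho\sim R^{(\gamma-1)q+1}$ and $\|\Phi_k^{DC}\|_{L^q}\sim R^{\gamma+\frac1q-1}$; replacing the exponent $\gamma-1$ by $\gamma-2$ gives the derivative bound, the $e^{-D|k|}$ prefactor being harmless. For $R\ge1$ and $|k|\ge2$ the first regime gives $(\min\{\rho/2,1\})^{\gamma-1}\le1$ and $(\min\{\rho/2,1\})^{\gamma-2}\lesssim1$, hence $|\Phi_k^{DC}|,|(\Phi_k^{DC})'|\lesssim e^{-D|k|}$ and a contribution $\lesssim e^{-D|k|}R$; since this regime enters only for $R\lesssim|k|$ and $e^{-D|k|}|k|^s$ is bounded for every $s$, this contributes $\lesssim R^{\beta(q)}$ to the norm. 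In the far regime $\rho\ge2|k|$ the bound $|\Phi_k^{DC}|\le\rho^{-1}$ gives $\int_R^{2R}\rho^{-q}\,d\rho\sim R^{1-q}$, i.e.\ $\|\Phi_k^{DC}\|_{L^q}\sim R^{\frac1q-1}$, which equals $R^{\beta(q)}$ for $q\in[2,4)$ and is $\le R^{\beta(q)}$ for $q\ge4$ since $R\ge1$; the derivative behaves identically.

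The only step requiring genuine computation is the intermediate ``Airy'' regime $|k|/2\le\rho\le2|k|$, where one estimates $|k|^{-3q/4}\int(||k|-\rho|+|k|^{1/3})^{-q/4}\,d\rho$ over an interval of length $\lesssim|k|$. Splitting the exponent $-q/4$ at $-1$: for $q\in[2,4)$ the integral is dominated by the scale $|k|$, so it is $\sim|k|^{1-q/4}$, the total is $\sim|k|^{1-q}\sim R^{1-q}$, and the norm is $\sim R^{\frac1q-1}=R^{\beta(q)}$; for $q\ge4$ the integral is dominated near $\rho=|k|$, so it is $\lesssim(|k|^{1/3})^{1-q/4}$ (with a logarithm at $q=4$), the total is $\lesssim|k|^{1/3-5q/6}\le R^{1-5q/6}=R^{\beta(q)q}$ because $R\sim|k|\ge1$, and the norm is $\lesssim R^{\beta(q)}$; the case $q=\infty$ is simply the pointwise bound $\sup_{[R,2R]}|\Phi_k^{DC}|\lesssim|k|^{-5/6}\sim R^{-5/6}$. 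For $|k|=1$ only the two regimes of \eqref{esteigformuladcrad1}--\eqref{esteigformuladcrad2} occur: integrating $\rho^{\gamma_1-1}$, resp.\ $\rho^{\gamma_1-2}$, on $[R,2R]\subset(0,1]$ gives $\|\cdot\|_{L^q}\sim R^{\gamma_1+\frac1q-1}$, resp.\ $R^{\gamma_1+\frac1q-2}$, and integrating $\rho^{-1}$ on $[R,2R]$ with $R\ge1$ gives $R^{\frac1q-1}$, which are exactly the claimed exponents. The main (mild) obstacle is precisely the bookkeeping of the $k$-dependence in the middle regime and in the straddling cases; once one records that a straddling interval is comparable to the relevant threshold, everything reduces to elementary integration of powers on dyadic intervals.
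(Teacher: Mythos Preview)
Your proposal is correct and is exactly the approach the paper takes: the paper's proof of this corollary says only ``the proof is the same as the one of Lemma \ref{lembessel}; we only need to make use of the estimates provided in Theorem \ref{thm_formuladc} which replace \eqref{estbes}'', and that lemma is proved precisely by splitting $[R,2R]$ into its intersections with the three pointwise regimes and integrating the bound on each piece, with the Airy-type middle regime handled by the same change of variables you describe. Your write-up simply makes explicit the bookkeeping (in particular the observation that a straddling interval forces $R\sim|k|$, so all $|k|$-powers convert to $R$-powers and the $e^{-D|k|}$ absorbs any residual polynomial in $|k|$) that the paper leaves to the reader.
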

\begin{proof}
The proof is the same of the one of Lemma \eqref{lembessel}; we only need to make use of the estimates provided in Theorem \ref{thm_formuladc} which replace \eqref{estbes}. 
\end{proof}

\begin{remark}\label{criticalregion}

Let us clarify why the restriction onto the positive side of the spectrum is a necessary assumption for Theorem \ref{DCtheorem}. As it is well known, for negative energies $E<-m$ an explicit representation of the generalized eigenfunctions can be deduced by the positive one by making use of the charge conjugation operator.
In other words, the generalized eigenfunctions for negative energies can be obtained from the corresponding positive ones by implementing the following substitutions in \eqref{psiDC}
\begin{equation}\label{chargeconj}
    E\rightarrow -E, \qquad \nu\rightarrow -\nu, \qquad k\rightarrow -k,\qquad {\bf p} \rightarrow -{\bf p}
\end{equation}
and by ``exchanging the roles of $F$ and $G$''. Notice then that the term $e^{\frac\pi2 \alpha_E}$ tends to zero if $E\rightarrow +m$, while it blows up if $E\rightarrow -m$. Then, in the repulsive case, in the negative side of the spectrum, and close to the energy levels of the eigenfunctions, a (structural) unbounded term appears, which provides an unavoidable obstacle in view of proving the necessary pointwise estimates on the generalized eigenfunctions and thus the dispersive estimates of Theorem \ref{DCtheorem}. This fact does not come too much as a surprise, as indeed the point spectrum of the Dirac-Coulomb operator in the repulsive case in the gap $(-m,m)$ accumulates towards $-m$ (see formula \eqref{discspec}), and this provides an obstacle for the dispersion for energies $\sim m$.
\end{remark}

    \end{document}